\def\NN{{\Bbb N}}
\def\BB{{\Bbb B}}
\def\XX{{\Bbb X}}
\def\CC{{\Bbb C}}
\def\Xf{\mathcal{X}}
\def\Bf{\mathcal{B}}
\def\Df{\mathcal{D}}
\def\Pf{\mathcal{P}}
\def\Af{\mathcal{A}}
\def\Ef{\mathcal{E}}
\def\Ff{\mathcal{F}}
\def\Mf{\mathcal{M}}
\def\kap{\varkappa}
\def\TT{{\Bbb T}}
\def\Cs{{{C}}}
\renewcommand{\Re}{{\Bbb R}}
\def\eps{\varepsilon}
\def\1{1\!\!\hbox{{\rm I}}}
\def\eqdef{\mathop{=}\limits^{df}}
\def\ax{\Re^+}
\newcommand{\prt}{\partial}
\newcommand{\be}{\begin{equation}}
\newcommand{\ee}{\end{equation}}
\newcommand{\ba}{\begin{aligned}}
\newcommand{\ea}{\end{aligned}}
\theoremstyle{plain}
\newtheorem{thm}{Theorem}[section]
\newtheorem{lem}{Lemma}[section]
\newtheorem{prop}{Proposition}[section]
\newtheorem{cor}{Corollary}[section]
\theoremstyle{definition}
\newtheorem{dfn}{Definition}[section]
\newtheorem{rem}{Remark}[section]
\numberwithin{equation}{section}
\begin{document}

\title[Asymptotic and spectral properties ]
    {Asymptotic and spectral properties of
exponentially  $\phi$-ergodic Markov processes}

\author{Alexey M. Kulik}
\address{Kiev 01601 Tereshchenkivska str. 3, Institute of Mathematics,
Ukrai\-ni\-an National Academy of Sciences}
\email{kulik\@imath.kiev.ua}
\thanks{Research is partially supported by National Academy of Science of Ukraine, project \# 184 -- 2008}

\subjclass[2000]{60J25, 60J35, 37A30}
\keywords{Markov process, ergodic rates, $L_p$ convergence rates,
{exponential $\phi$-coupling}, {growth bound},
      {spectral gap},  {Poincar\'e inequality}, hitting times}

\begin{abstract}
New relations between ergodic rate, $L_p$ convergence rates, and
asymptotic behavior of tail probabilities for hitting times of a
time homogeneous Markov process are established. For $L_p$
convergence rates and related spectral and functional properties
(spectral gap and Poincar\'e inequality) sufficient conditions are
given in the terms of an exponential $\phi$-coupling. This
provides sufficient conditions for $L_p$ convergence rates in the
terms of appropriate combination of `local mixing' and
`recurrence' conditions on the initial process, typical in the
ergodic theory of Markov processes. The range of application of
the approach includes time-irreversible processes. In particular,
sufficient conditions for spectral gap property for L\'evy driven
Ornstein-Uhlenbeck process are established.
\end{abstract}

\maketitle

\section{Introduction}

In this paper, we establish new relations between
three topics related to the asymptotic behavior of a time homogeneous
Markov process:

\begin{itemize}
\item  {\it ergodic rate}; that is,
the rate of convergence of the transition probabilities to the
invariant measure of the process;

\item \emph{$L_p$ convergence rates}; that is, rates of convergence for $L_p$-semigroups generated by the process;

\item tail probabilities for \emph{hitting times} of the process.
\end{itemize}

It is well known  that $L_p$ (especially, $L_2$) convergence
rates for a Markov process  are closely related with a number of
intrinsic functional features: the spectral gap property for the
generator of the process,  the Poincar\'e inequality for the
associated Dirichlet form,  Cheeger-type isoperimetric inequality
for the invariant measure. On the other hand, the classic methods
of the ergodic theory of Markov processes allow one to establish
ergodic rates under quite simple and transparent conditions on
the process that do not involve any essential limitation on the
structure of the state space. Our intent is to extend the range
of applications of these methods in order to provide similar
conditions for $L_p$ convergence rates.

It looks very unlikely that  $L_p$ convergence rates  can be deduced  from ergodic ones straightforwardly.
The ergodic rates are, in fact, norm estimates for a semigroup of operators in $\BB(\XX)$.
   In general, one  have no means to expect that such estimates  would produce a norm estimate for
    semigroup of operators in $L_p(\XX,\pi)$ with some measure $\pi$ when the state space $\XX$ is
    of a complicated structure. This guess is supported by concrete examples, see  section
    4
    below.

It is well known  that for a Markov process with a finite state
space three topics listed above are, in fact, equivalent; see the
detailed exposition in \cite{AF}, Chapters 2 -- 4. For a process
with at most countable state space,  relations between its ergodic
properties and rates of convergence for related  $L_2$-semigroup
were studied in  \cite{Chen00}. However,  the methods   of
\cite{Chen00} exploit the representation of the state space  as a
countable collection of points, and hardly admit a straightforward
generalization to a general case. In this paper, we propose a new
point of view.  Let us explain the main idea of our approach
briefly; a more detailed discussion is given in sections 2 and 3
below.

We start our considerations not from the estimate for the ergodic
 rate of a Markov process itself, but from the  auxiliary construction
 of a {\it coupling}, which is a standard tool for proving such an estimate.
 This construction appears to be an appropriate tool for getting  $L_p$ estimates
  as well, see section 3 below. In such a way, we are able to establish estimates
   for $L_p$  convergence rates  under the typical conditions used in the ergodic theory of Markov
   processes.

Usually, such conditions  include  some {\it local mixing
conditions}, and some {\it recurrence conditions}.  The former
ones are discussed in details  in section 2.1; the latter ones
can be formulated in the terms of {\it hitting times} of some
sets by the process $X$. Henceforth, in our  framework, estimates
for the hitting times  are involved, as sufficient conditions,
 both into  ergodic  rates and into $L_p$ convergence rates for the process. On the other hand,
  it is known (\cite{Mat97}) that the functional inequalities like the Poincar\'e one imply moment
  estimates for hitting times. Therefore
three topics listed at the beginning of the Introduction are  closely related indeed.
In fact, our approach allows us to give, for  some classes of the processes, necessary
and sufficient conditions that  describe relations between these topics completely.

The range of
    applications of our approach is not restricted to time-reversible processes. For time-reversible processes
    respective $L_2$-generators are self-adjoint  which makes possible to apply the spectral decomposition
theorem in order to get one-to-one correspondence between ergodic
rates and $L_2$-convergence rates; see \cite{RR97} and references
therein for discrete-time case and \cite{Chen00}, Theorem 1.2 for
continuous-time case. Our approach does not rely heavily  on the
spectral decomposition theorem. This makes possible to consider,
for instance, solutions to SDE's with jump noise which typically
are irreversible (in time).

The structure of the article is following. In section 2, we give
basic notions and constructions
 required for the main exposition. In particular, we introduce the notion of an \emph{exponential $\phi$-coupling},
  which is the main tool in our approach. Section 3  contains the main part of the paper devoted to the proof of
    $L_p$ convergence rates and related functional properties in the terms of the  exponential $\phi$-coupling property.
     In section 4 we consider one example of a Markov process and use it to demonstrate  main statements,
     as well as relations between the  \emph{exponential $\phi$-coupling}, \emph{growth bound},
      \emph{spectral gap}, and \emph{Poincar\'e inequality}. Section 5 contains an application of
       the main results to  L\'evy driven Ornstein-Uhlenbeck processes. In the recent paper \cite{Kul09}, ergodic rates for processes defined by  L\'evy driven SDE's are established.
        Here, we extend these results and describe spectral properties of a generator for some class of
         such processes. We have already mentioned that solutions to
L\'evy driven SDE's, typically, are irreversible (in time). Hence
the corresponding theory for $L_p$ semigroups appears to be
substantially more complicated than, for instance,  respective
theory for diffusion processes. For diffusion processes, we
establish in section 7 {\it a criterion} which gives one-to-one
correspondence between three topics mentioned at the beginning of
Introduction. This criterion extends, in particular, the
sufficient condition from \cite{RW04}, Theorem 1.1 for a diffusion
process to satisfy the Poincar\'e inequality. The proof of this
criterion is based on the main results from section 3 and
exponential integrability of the hitting times under the
Poincar\'e inequality. The latter statement is proved in section
6, and performs  an improvement of the integrability result from
\cite{Mat97}.

\section{Notation and basic constructions}

\subsection{Elements of ergodic theory for Markov processes} We consider a time homogeneous Markov process $X=\{X_t,
t\in\ax\}$ with a locally compact metric space $(\XX,\rho)$ as
the state space. The process $X$ is supposed to be strong Markov
and to have c\'adl\'ag trajectories. The transition function for
the process $X$ is denoted by $P_t(x,dy), t\in\ax, x\in\XX$. We
use standard notation $P_x$ for the distribution of the process
$X$ conditioned that $X_0=x$, and $E_x$ for the expectation w.r.t.
$P_x$ ($x\in\XX$ is arbitrary).

All the functions on $\XX$ considered in the paper are assumed to
be measurable w.r.t. Borel $\sigma$-algebra $\Bf(\XX)$. The set
of probability measures on $(\XX,\Bf(\XX))$ is denoted by
$\Pf(\XX)$. For a given $\mu\in\Pf(\XX)$ and $t\in\ax$, we denote
$ \mu_t(dy)\eqdef \int_\XX P_t(x,dy)\, \mu(dx). $ Clearly, $\mu_t$
coincides  with the distribution of the value $X_t$  assuming
that the distribution of the initial value $X_0$ equals $\mu$.
Probability  measure $\mu$ is called an invariant measure for $X$
if  $\mu_t=\mu, t\in\ax$.

In our considerations, we are mostly interested in the processes
on a non-compact state spaces, such as  diffusions on non-compact
manifolds or  solutions to SDE's with a jump noise. Typically,
for such a processes there does not exist a uniform (in $\mu$)
estimate for the rate of convergence rate of convergence of
$\mu_t$ to $\mu$ w.r.t. to the total variation distance. For such
a processes, the notion of $(r,\phi)$-ergodicity appears to be
most natural (see \cite{DFG09} and discussion therein). Let us
expose this notion and related objects.

Let $\phi:\XX\to [1,+\infty)$ be a Borel measurable function. For
a signed measure $\kap$, its {\it $\phi$-variation} is defined by
$\|\varkappa\|_{\phi,var}=\int_\XX\phi\,d|\varkappa|,$ where
$|\kap|=\kap_+=\kap_-$ is the variation of the signed measure
$\kap$. If $\phi\equiv 1$, the $\phi$-variation is the usual
total variation $\|\cdot\|_{var}$. Let $r:\ax\to \ax$ be some
function such that $r(t)\to 0, t\to \infty$.

\begin{dfn}\label{d11} The process $X$ is called  $(r,\phi)$-ergodic if the class of
invariant measures for $X$ contains exactly one measure $\pi$, and
$$
\|\mu_t-\pi\|_{\phi,var}\leq r(t)\int_{\XX}\phi\,d\mu,\quad
t\in\ax, \mu\in\Pf(\XX).
$$
\end{dfn}

We call the process  {\it exponentially $\phi$-ergodic} if there
exists some positive constants $C,\beta$ such that $X$ is
$(r,\phi)$-ergodic with $r(t)=Ce^{-\beta t}$.

 By
the common terminology, a {\it coupling} for a pair of the
processes $U,V$ is any two-component process $Z=(Z^1,Z^2)$ such
that $Z^1$ has the same distribution with $U$ and $Z^2$ has the
same distribution with $V$. Following this terminology, for every
$\mu,\nu\in\Pf$, we consider two versions $X^\mu,X^\nu$ of the
process $X$ with the initial distributions equal $\mu$ and $\nu$,
respectively, and  call a {\it $(\mu,\nu)$-coupling for the
process $X$} any two-component process $Z=(Z^1,Z^2)$ which is a
coupling for $X^\mu, X^\nu$.

\begin{dfn}\label{d13} The process $X$ \emph{admits an
exponential $\phi$-coupling} if there exists an invariant measure
$\pi$ for this process and constants $C_\phi>0,\beta>0$ such that,
for every $x\in \XX$, there exists a $(\delta_x,\pi)$-coupling
$Z=(Z^1,Z^2)$ with
$$
E\Big[\phi(Z^1_t)+\phi(Z_t^2)\Big]\1_{Z_t^1\not=Z_t^2}\leq C_\phi
e^{-\beta t}\phi(x),\quad t\geq 0.
$$
\end{dfn}

It is a simple observation that a process $X$ which admits an
exponential $\phi$-coupling  is exponentially $\phi$-ergodic.
This observation, however, gives an efficient tool for proving
exponential $\phi$-ergodicity, because explicit  sufficient
conditions are available that allow one to construct an
exponential $\phi$-coupling. Let us formulate one statement of
such a kind.

\begin{dfn}\label{d12} The process $X$ satisfies \emph{the local Doeblin
condition}, if  for
 every compact set $K\subset \XX$ there exists $T>0$ such that
 $$
 \kap(T,K)\eqdef \sup_{x,y\in K}{1\over 2}\|P_T(x,\cdot)-P_T(y,\cdot)\|_{var}<1.
 $$
\end{dfn}

\begin{prop}\label{p21}  Assume process $X$ to satisfy the local Doeblin condition. Let
function  $\phi:\XX\to [1,+\infty)$ be such that $\phi(x)\to
+\infty, x\to \infty$ and the process
\be\label{lyap0}\phi(X_t)+\int_0^t[\alpha\phi(X_s)-C]\, ds,\quad
t\in \ax\ee is a supermartingale w.r.t. to every measure $P_x, x\in
\XX$  for some positive constants $\alpha, C$.

Then the process $X$ admits an exponential $\phi$-coupling.
\end{prop}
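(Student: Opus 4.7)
The plan is to combine the standard Meyn--Tweedie coupling construction with the exponential control on excursions furnished by the Lyapunov supermartingale in \eqref{lyap0}. First I would extract three consequences of the supermartingale property by applying Dynkin's formula to test functions of the form $e^{\gamma t}\phi$ and $e^{\gamma(t\wedge\tau_{K_R})}\phi$ for $\gamma\in(0,\alpha)$: the pointwise bound $E_x\phi(X_t)\le e^{-\alpha t}\phi(x)+C/\alpha$; an exponential moment estimate $E_x e^{\gamma\tau_{K_R}}\le c_R\phi(x)$ for the first hitting time of the sublevel set $K_R=\{\phi\le R\}$ whenever $R>C/\alpha$; and a tightness argument producing at least one invariant measure $\pi$ with $\int\phi\,d\pi\le C/\alpha$. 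The growth condition $\phi(x)\to+\infty$ ensures that $K_R$ is relatively compact, so that the local Doeblin condition is applicable to it.

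Next, fix such an $R$ and write $K=K_R$. The local Doeblin condition yields $T>0$ and $\kappa<1$ with ${1\over 2}\|P_T(x,\cdot)-P_T(y,\cdot)\|_{var}\le\kappa$ for every $x,y\in K$, which (possibly after passing to $P_{2T}$) gives a probability measure $\eta$ and a constant $q\in(0,1)$ such that $P_T(x,\cdot)\ge(1-q)\eta(\cdot)$ for all $x\in K$ simultaneously. I would then construct $Z=(Z^1,Z^2)$, a coupling of $\delta_x$ and $\pi$, on the skeleton $\{nT\}$: run the two components as independent copies of $X$ until a skeleton time $nT$ with both components in $K$; attempt a coupling step by drawing $Z^1_{(n+1)T}=Z^2_{(n+1)T}$ jointly from $\eta$ with probability $1-q$ (thereafter continuing the trajectories synchronously so that $Z^1_s=Z^2_s$ for every $s\ge (n+1)T$), or continuing independently with probability $q$; then iterate.

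The main obstacle is to verify that the coupling time $\tau=\inf\{t:Z^1_s=Z^2_s\text{ for all }s\ge t\}$ satisfies
$$
E\bigl[(\phi(Z^1_t)+\phi(Z^2_t))\1_{\tau>t}\bigr]\le C_\phi e^{-\beta t}\phi(x),\qquad t\ge 0.
$$
The approach I would follow is: (i) combine the strong Markov property with the exponential moment estimate for $\tau_K$ applied to both components --- using $\int\phi\,d\pi<\infty$ to control $Z^2$ --- in order to bound the Laplace transform of the $n$-th joint skeleton time at which both components lie in $K$, obtaining geometric growth in $n$; (ii) multiply by the failure probability $q^n$ and optimise in $n\sim\lambda t$, producing $P_x(\tau>t)\le c\phi(x)e^{-\beta' t}$ for some $\beta'>0$; (iii) convert this tail bound into the required $\phi$-weighted expectation by applying the Lyapunov bound $E_\bullet\phi(X_{t-s})\le e^{-\alpha(t-s)}\phi(\bullet)+C/\alpha$ at the last coupling attempt before $t$ and absorbing the linear loss via the geometric decay of the failure probability. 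The delicate point is to keep the same weight $\phi$ on the right-hand side rather than some larger power of $\phi$, which forces a careful interplay between steps (ii) and (iii) and is where I expect most of the effort to be spent.
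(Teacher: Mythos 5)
Your overall architecture is the right one and mirrors the standard Meyn--Tweedie argument used in the paper's Appendix (for the more general Theorems~\ref{tA1}--\ref{tA2}, of which Proposition~\ref{p21} is essentially a corollary): extract drift/hitting-time estimates from the supermartingale, build a skeleton coupling, and control the coupling time with a Chernoff-type optimization before reweighting by $\phi$. However there is a genuine gap in the second step of the construction.

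You assert that the local Doeblin condition, i.e. the pairwise bound $\tfrac12\|P_T(x,\cdot)-P_T(y,\cdot)\|_{var}\le\kappa<1$ for $x,y\in K$, yields (possibly after passing to $P_{2T}$) a common minorization $P_T(x,\cdot)\ge(1-q)\eta(\cdot)$ for \emph{all} $x\in K$ with a \emph{single} measure $\eta$. This is not implied by Definition~\ref{d12}. The pairwise overlap condition only says that $\|P_T(x,\cdot)\wedge P_T(y,\cdot)\|\ge 1-\kappa$ for each fixed pair $(x,y)$; the sub-probability measures $P_T(x,\cdot)\wedge P_T(y,\cdot)$ vary with the pair and need not share a common component, and iterating the kernel does not repair this in general. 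The minorization condition (a Doeblin/small-set condition) is strictly stronger than the local Doeblin condition as defined in the paper, and the whole point of the paper's formulation is precisely to avoid requiring a uniform minorization, since a pairwise total-variation bound can be verified directly via Malliavin-type arguments for SDE solutions whereas a common $\eta$ typically cannot.

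The fix is to replace the ``draw jointly from $\eta$'' step by a \emph{pairwise} maximal coupling: once both components lie in $K$ at a skeleton time $nT$, condition on their positions $(u_1,u_2)$ and realize the pair $\bigl(Z^1_{(n+1)T},Z^2_{(n+1)T}\bigr)$ as a maximal coupling of $P_T(u_1,\cdot)$ and $P_T(u_2,\cdot)$, which succeeds (i.e.\ produces $Z^1_{(n+1)T}=Z^2_{(n+1)T}$) with probability $1-\tfrac12\|P_T(u_1,\cdot)-P_T(u_2,\cdot)\|_{var}\ge 1-\kappa$, uniformly over $u_1,u_2\in K$. This is exactly what the paper calls a ``gluing coupling'' in its Appendix. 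With this substitution your per-attempt success probability $1-\kappa$ survives, and the remainder of the outline (the Chernoff optimization over the number of attempts and the reweighting by $\phi$) goes through --- though I would caution that in step~(iii) ``the last coupling attempt before $t$'' is not a stopping time, so the conversion from a tail bound on $\tau$ to the $\phi$-weighted bound must be organized as a sum over the attempt index $n$ with conditioning at each $\Ff_{\theta^{n}}$ rather than as a single application of the Markov property at a non-adapted random time; this is the bookkeeping the paper's Lemma in the Appendix carries out. These are the two points on which your sketch, as written, would not compile into a complete proof.
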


Clearly, under conditions of Proposition \ref{p21}, the process
$X$ is exponentially $\phi$-ergodic.  The statements of such a
type are well known in the ergodic theory of Markov processes (see
e.g. \cite{And91} or \cite{MT93}), but usually the notion of a
$\phi$-coupling is not introduced separately. In section 3 below,
we show that  this notion is of independent interest because it
allows one to control  $L_p$ convergence rates as well.

\begin{rem}\label{r21} Frequently, the (exponential) ergodicity results
are formulated in the terms of other conditions that guarantee
irreducibility of the Markov process $X$ instead of  the local
Doeblin condition. For instance, in \cite{MT93},\cite{DFG09} such
an irreducibility condition is given in the terms of {\it petite
sets}. However, the local Doeblin condition appears to be more
convenient to deal with in the explicit construction of a
$\phi$-coupling. In addition, this condition can be verified
efficiently for important particular classes of processes, such
as diffusions (see \cite{Ver87}, \cite{Ver99} and section 7
below) or solutions to SDE's with jump noise (see \cite{Kul09}
and section 5 below).
\end{rem}

The more compact, but more restrictive, form of the above
condition on the function $\phi$ can be given in the terms of the
{\it extended generator} $\Af$ of the process $X$. Recall that a
locally bounded function $f:\XX\to \Re$ belongs to the domain
$Dom (\Af)$ of the extended generator $\Af$, if there exists a
locally bounded function $g:\XX\to \Re$ such that the process
$X^f_t\eqdef f(X_t)-\int_0^tg(X_s)\, ds, t\in \ax$ is a
martingale w.r.t. to any measure $P_x, x\in \XX$. For such $f$,
$\Af f\eqdef g$. Clearly,  process (\ref{lyap0}) is a
supermartingale w.r.t. to any measure $P_x, x\in \XX$ if the
function $\phi \in Dom (\Af)$ satisfies the following
Lyapunov-type condition: \be\label{lyap}
 \Af\phi(x)\leq -\alpha \phi(x)+C, \quad x\in \XX.
 \ee

Conditions of Proposition \ref{p21} appear to
be too restrictive for our further purposes; see more detailed discussion after
Proposition \ref{p22} below.  Thereby, we provide milder
conditions which still are sufficient  for $X$ to admit an exponential $\phi$-coupling.

For a closed set $K$ denote $\tau_K\eqdef\inf \{t\geq 0: X_t\in
K\}$, the hitting time of the set $K$ by the process $X$.

\begin{thm}\label{tA1} Assume process $X$ to satisfy the local Doeblin condition.
Let there exist function  $\phi:\XX\to [1,+\infty),$ compact set
$K\subset \XX$, and $\alpha>0$  such that
\begin{itemize}\item[1)] $\phi(x)\to +\infty, x\to \infty$;
\item[2)] $E_x\phi(X_t)\1_{\tau_K>t}\leq e^{-\alpha t}\phi(x),x\in\XX;$
\item[3)] $\sup_{x\in K, t\in\ax} E_x\phi(X_t)\1_{\phi(X_t)>c}\to 0,\quad c\to +\infty.$
\end{itemize}

 Then the process $X$ admits an
exponential $\phi$-coupling.
\end{thm}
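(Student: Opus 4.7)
The plan is to imitate the proof of Proposition \ref{p21}, constructing an exponential $\phi$-coupling explicitly by iterating attempts to couple whenever both components of $Z=(Z^1,Z^2)$ visit $K$ simultaneously, but relying on conditions 2) and 3) instead of on the Lyapunov inequality (\ref{lyap}). As a first step I would extract from the hypotheses the two preliminary bounds
\begin{equation*}
P_x(\tau_K>t)\le e^{-\alpha t}\phi(x),\qquad E_x\phi(X_t)\le e^{-\alpha t}\phi(x)+M,
\end{equation*}
with $M\eqdef\sup_{y\in K,\,s\in\ax}E_y\phi(X_s)<\infty$ (finite by condition 3) together with $\phi\ge 1$). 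The first is immediate from 2) and $\phi\ge 1$; the second follows by splitting at $\tau_K$ and applying the strong Markov property together with 3). Coercivity of $\phi$ from condition 1), combined with the moment bound, then yields tightness of the occupation measures $\frac{1}{t}\int_0^tP_s(x,\cdot)\,ds$, so a Krylov--Bogolyubov argument produces an invariant measure $\pi$ with $\int\phi\,d\pi<\infty$.

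Next, let $T>0$ and $\kap\eqdef\kap(T,K)<1$ come from the local Doeblin condition. Setting $Z^1_0=x$ and $Z^2_0\sim\pi$ independent, I would inductively define stopping times $\sigma_n$ as the consecutive moments at which both $Z^1$ and $Z^2$ lie in $K$, and at each $\sigma_n$ employ the standard maximal coupling of $P_T(Z^i_{\sigma_n},\cdot)$ on $[\sigma_n,\sigma_n+T]$ to merge the paths with probability at least $1-\kap$; on failure, the two trajectories are continued independently until the next simultaneous visit $\sigma_{n+1}$. Let $T^*$ be the resulting coupling time. The exponential tails of $\tau_K$ (with prefactor $\phi(x)$ for the initial excursion and a bounded prefactor $\sup_K\phi$ for subsequent excursions starting in $K$), combined with the geometric $\kap^n$ failure probability of $n$ successive attempts, should yield
\begin{equation*}
P(T^*>t)\le C'e^{-\beta' t}\phi(x),\qquad t\in\ax,
\end{equation*}
with $C',\beta'>0$ independent of $x$.

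The final step, which I expect to be the main technical obstacle, is to pass from this tail bound on $T^*$ together with the linear moment bound $E\phi(Z^i_t)\le e^{-\alpha t}\phi(x)+M$ to the required estimate
\begin{equation*}
E\bigl[\phi(Z^1_t)+\phi(Z^2_t)\bigr]\1_{T^*>t}\le C_\phi e^{-\beta t}\phi(x),
\end{equation*}
with uniform positive rate $\beta$ and prefactor linear in $\phi(x)$. Since only first moments of $\phi$ along the trajectories are under control, Cauchy--Schwarz is unavailable, and one must instead exploit condition 3) directly: splitting $\phi=\phi\1_{\phi\le c}+\phi\1_{\phi>c}$ at an appropriately $t$-dependent level $c$, the small-$\phi$ piece is bounded by $cP(T^*>t)$, while the large-$\phi$ piece is estimated by decomposing at the last visit to $K$ before time $t$, using condition 2) along the final excursion and the uniform integrability from 3) to handle the sojourns in $K$. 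It is precisely at this balancing step that conditions 2) and 3) must cooperate in order to recover the exponential rate that the stronger Lyapunov hypothesis (\ref{lyap}) would have supplied automatically.
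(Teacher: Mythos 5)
Your overall plan — two independent copies driven until a common visit of some set, a maximal/gluing coupling attempt there using the local Doeblin bound, and an iterated cycle with geometrically decaying failure probability — is exactly the strategy of the paper's proof. The two preliminary bounds you extract and the Krylov--Bogolyubov step for existence of $\pi$ with $\int\phi\,d\pi<\infty$ are fine. But the proposal contains one genuine gap and one smaller misjudgement.

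The gap is the choice of trigger set. You propose to wait for the \emph{simultaneous} visit of both components to $K$ and to estimate that time by the exponential tails of $\tau_K$ plus a bounded prefactor for "subsequent excursions starting in $K$." Conditions 2)--3) give no control over simultaneous visits to $K$: two independent copies may hit $K$ at interleaved times without ever being in $K$ together, and nothing in the hypotheses rules that out. The paper instead couples when both components are in the larger set $K'=\{\phi\le c\}$ (which has compact closure by condition 1), and the decisive technical step is a lemma showing that the first simultaneous visit of $K'$ has $\phi$-weighted exponential tails. The mechanism is precisely the one your hypotheses invite: define alternating hitting times $\tau^n$ at which one component enters $K$; if the other is \emph{not} in $K'$ at that instant then its $\phi$-value exceeds $c$, and since that other component was itself in $K$ at time $\tau^{n-1}$, condition 3) bounds the conditional expectation of $\phi\1_{\phi>c}$ by a number $\delta$ which can be made smaller than $1-\gamma/\alpha$. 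Iterating gives the geometric factor $\bigl(\delta\alpha/(\alpha-\gamma)\bigr)^n$ that powers the exponential tail. This lemma is what your proposal is missing; without it the estimate $P(T^*>t)\le C'e^{-\beta' t}\phi(x)$ that you write down is not justified, and consequently the final $\phi$-weighted bound cannot be assembled either.

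The smaller point concerns your remark that Cauchy--Schwarz is unavailable because only first moments of $\phi$ are controlled. The paper does in fact use Cauchy--Schwarz, but not on $\phi$: it is applied to separate $e^{\beta\theta^{2k-1}}$ from the indicator of the $k$-th coupling failure, i.e.\ $E\,e^{\beta\theta^{2k-1}}\1_{\text{fail}}\le\bigl(E\,e^{2\beta\theta^{2k-1}}\bigr)^{1/2}\kappa(T,K')^{(k-1)/2}$, after which the $\phi$-weights on the current interval are removed by condition 3) (recalling that at $\theta^{2k-1}$ both components lie in $K'$, so $\phi\le c$ there). This iterated decomposition over the cycles $[\theta^{2k},\theta^{2k+2})$, driven by the lemma's bound, is what replaces the level-splitting $\phi\1_{\phi\le c}+\phi\1_{\phi>c}$ you sketch; once the lemma is in place the final step is mechanical rather than the "main technical obstacle" you anticipated.
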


Condition 1) of Theorem \ref{tA1} is quite natural as long as
$\phi$ is considered as a Lyapunov function. However,  in some
cases this condition  may be too restrictive, too. Below, we give
a version of Theorem \ref{tA1} that does not require any
assumptions on the limit behavior of $\phi$.

We  say that process $X$ {\it satisfies the Doeblin condition on a
set} $A\subset \XX$ if  there exists $T>0$ such that $\kap(T,
A)<1.$ We also say that process $X$ {\it satisfies the extended
Doeblin condition on a set} $A\subset \XX$ if  there exist $T_1,
T_2$ ($0<T_1<T_2$) such that \be\label{81}
 \kap(T_1,T_2,K)\eqdef \sup_{x,y\in K, s,t\in[T_1,T_2]}{1\over 2}\|P_s(x,\cdot)-P_t(y,\cdot)\|_{var}<1.
\ee
 We remark that these definitions are  not  standard ones,
but they look quite natural in the context of Definition \ref{d12}
and the following theorem.

\begin{thm}\label{tA2} Let there exist function  $\phi:\XX\to [1,+\infty),$ closed
set  $K\subset \XX$, and $\alpha>0$ such that conditions 2), 3) of
Theorem \ref{tA1} hold true. Assume that either
 $X$ satisfies the
Doeblin condition on  $\{\phi\leq c\}$  for every $c\in
[1,+\infty)$, or $X$  satisfies the extended Doeblin condition on
$K$.

 Then the process $X$ admits an
exponential $\phi$-coupling.
\end{thm}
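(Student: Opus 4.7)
The plan is to construct an exponential $\phi$-coupling by reproducing the iterated-coupling scheme that underlies Theorem \ref{tA1}, handling the loss of compactness of sublevel sets of $\phi$ via one of the two alternative hypotheses. Conditions 2) and 3) by themselves already supply the ``drift'' ingredients of the argument: a Gronwall-type iteration of the inequality in 2) yields $E_x e^{\gamma\tau_K}\leq c_1\phi(x)$ for some $\gamma\in(0,\alpha)$ and $c_1>0$, and combined with 3) and the strong Markov property at $\tau_K$ one also obtains
\[
\sup_{x\in\XX,\,t\in\ax}\phi(x)^{-1}E_x\phi(X_t)<\infty,
\]
together with a $\phi$-weighted exponential-moment bound for the successive return times of $X$ to $K$. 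Applied to each component of the prospective coupling, this controls the lengths of the ``excursions'' between coupling attempts.

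In the first case, when the Doeblin condition is assumed on every sublevel set $\{\phi\leq c\}$, I would fix $c_0>0$ with $K\subset A\eqdef\{\phi\leq c_0\}$ and $\kap(T,A)<1$. The coupling is then built exactly as in Theorem \ref{tA1}: let both components evolve independently until they are simultaneously in $A$, split according to the Doeblin minorant at horizon $T$ with per-attempt success probability bounded below by $1-\kap(T,A)$, and iterate on failure. The excursion bounds from the previous paragraph convert the geometric number of failures into the exponential-in-$t$ decay required by Definition \ref{d13}, with the prefactor $\phi(x)$.

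The second case, when only the extended Doeblin condition (\ref{81}) is available on $K$, is more delicate because (\ref{81}) compares $P_s(x,\cdot)$ with $P_t(y,\cdot)$ at possibly different times $s,t\in[T_1,T_2]$, so coupling attempts must be carried out \emph{asynchronously}. Whenever both components $Z^1$ and $Z^2$ are in $K$ at a common stopping time $\sigma$ (which occurs infinitely often by the excursion control), I would apply the maximal coupling of the laws $P_{T_1}(Z^1_\sigma,\cdot)$ and $P_{T_2}(Z^2_\sigma,\cdot)$: with probability at least $1-\kap(T_1,T_2,K)$ this yields a common value $Y$ used as $Z^1_{\sigma+T_1}=Z^2_{\sigma+T_2}$. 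After a successful attempt the two components are driven by the same realisation of $X$ started at $Y$, but with a time shift of $T_2-T_1$; on failure, independent evolution resumes until the next simultaneous visit to $K$.

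The main obstacle is not the hitting-time bookkeeping, which is standard given conditions 2) and 3), but in verifying that the asynchronous construction of Case 2 legitimately defines a $(\delta_x,\pi)$-coupling in the sense of Definition \ref{d13}. The critical point is that, after each time shift, the law of the whole trajectory $(Z^2_t)_{t\geq 0}$ must coincide with that of $X^\pi$; invariance of $\pi$ makes this possible, since time-translation preserves the finite-dimensional distributions of $X^\pi$. Once this is established, the remaining quantitative step is a geometric-trials estimate combined with the $\phi$-weighted exponential moments of the return times, which produces the bound of Definition \ref{d13}.
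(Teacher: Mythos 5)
Your treatment of the first case (Doeblin on every sublevel set $\{\phi\leq c\}$) is essentially the paper's argument: the proof of Theorem \ref{tA1} uses compactness of $K'=\{\phi\leq c\}$ only to guarantee $\kap(T,K')<1$, so assuming the Doeblin condition directly on these sets lets the same construction run verbatim.

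Your treatment of the second case (extended Doeblin on $K$), however, has two concrete gaps. First, you propose to run the coupling attempt at a stopping time $\sigma$ at which \emph{both} components are simultaneously in $K$, claiming this occurs infinitely often ``by the excursion control.'' That does not follow: conditions 2)--3) control each component's return times to $K$ separately, but two independent copies need never be in $K$ at the same instant (take $K$ small and the process continuous). The paper does not use simultaneous visits to $K$ at all; it uses simultaneous presence in the \emph{sublevel set} $K'$, after which the two components hit $K$ at generally distinct times $\tau^1,\tau^2$. Second, even granting a simultaneous visit, your prescription $Z^1_{\sigma+T_1}=Z^2_{\sigma+T_2}=Y$ followed by ``the same realisation of $X$ started at $Y$, with a time shift of $T_2-T_1$'' never makes the two components agree at a common time: one would have $Z^1_t=\tilde X_{t-\sigma-T_1}$ and $Z^2_t=\tilde X_{t-\sigma-T_2}$, so $\1_{Z^1_t\neq Z^2_t}$ does not vanish and the bound required by Definition \ref{d13} fails.

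The paper resolves both issues differently: it reduces the extended-Doeblin case to the first case by proving, as a lemma, that the extended Doeblin condition on $K$ already implies the Doeblin condition on every $\{\phi\leq c\}$. The mechanism is an ``extended gluing coupling'' built from the two hitting times $\tau^1\leq\tau^2\leq Q$ of $K$, with terminal horizons $t_1=T-\tau^1$ and $t_2=T-\tau^2$ chosen so that both components land at the \emph{same} absolute time $T=Q+T_1$, with $t_1,t_2\in[T_1,T_1+Q]$; the extended Doeblin bound $\kap(T_1,T_1+Q,K)<1$ then yields $\kap(T,K')<1$ with a fixed positive margin. Coincidence at a common time is exactly what makes the subsequent ``drive both components identically'' step legitimate and what your time-shifted construction lacks. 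If you wish to keep a direct asynchronous construction rather than the paper's reduction, you would need to re-synchronize the two components to a common terminal time as the paper does, and abandon the simultaneous-visit requirement in favour of comparing the two individual hitting times on a common window $[0,Q]$.
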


From Theorems \ref{tA1}, \ref{tA2} we deduce the following
statement. Denote, for $t>0$, $$\tau_K^t\eqdef\inf \{s\geq 0:
X_{t+s}\in K\}.$$

\begin{prop}\label{p22}  Assume process $X$ to satisfy the local Doeblin
condition. Let there exist  compact set $K\subset \XX$ and
$S,\alpha>0$ such that
\begin{itemize}\item[1)] $\lim_{c\to+\infty}\lim\inf_{x\to \infty}P_x(\tau_K>c)>0$;
\item[2)] $E_xe^{\alpha\tau_K}<+\infty,x\in\XX;$
\item[3)] $\sup_{x\in K, t\in [0,S]} E_xe^{\alpha\tau_K^t}<+\infty$.
\end{itemize}

 Then, for every $\alpha'\in (0,\alpha),$ the process $X$ admits an
exponential $\phi$-coupling with $\phi(x)=E_xe^{\alpha'\tau_K},
x\in \XX$.

In addition, if $X$  satisfies the extended Doeblin condition on
$K$ then condition 1) is not required.
\end{prop}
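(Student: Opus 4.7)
The strategy is to verify the hypotheses of Theorem \ref{tA1} for the candidate function $\phi(x) = E_x e^{\alpha' \tau_K}$; for the final sentence of the proposition, in which condition 1) is dropped, I would instead invoke Theorem \ref{tA2}, which requires the same conditions 2) and 3) of Theorem \ref{tA1} but not the growth condition 1).

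Condition 2) of the proposition and the choice $\alpha'<\alpha$ ensure that $\phi$ takes values in $[1,+\infty)$, with $\phi\equiv 1$ on $K$ since $\tau_K=0$ there. The growth condition 1) of Theorem \ref{tA1} follows from $\phi(x)\geq e^{\alpha' c}P_x(\tau_K>c)$ combined with condition 1) of the proposition: the latter produces $\delta>0$ such that $\liminf_{x\to\infty}P_x(\tau_K>c)\geq\delta$ for every sufficiently large $c$, hence $\liminf_{x\to\infty}\phi(x)\geq \delta e^{\alpha' c}$, and letting $c\to\infty$ yields $\phi(x)\to+\infty$. Condition 2) of Theorem \ref{tA1} (with constant $\alpha'$ in place of $\alpha$) is immediate from the pathwise identity $\tau_K=t+\tau_K^t$ on $\{\tau_K>t\}$ and the Markov property at $t$:
\[
e^{\alpha' t}E_x\phi(X_t)\1_{\tau_K>t} \;=\; E_x e^{\alpha'\tau_K}\1_{\tau_K>t} \;\leq\; \phi(x).
\]

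The main step is condition 3) of Theorem \ref{tA1}, i.e.\ uniform integrability of the family $\{\phi(X_t):x\in K,\,t\in\ax\}$ under $P_x$. I would fix $p>1$ with $p\alpha'<\alpha$ and set $\tilde\phi(x)=E_x e^{p\alpha'\tau_K}$; by Jensen, $\phi(X_t)^p\leq E_{X_t}e^{p\alpha'\tau_K}$ and hence $E_x\phi(X_t)^p\leq E_x\tilde\phi(X_t)$, so it suffices to bound $E_x\tilde\phi(X_t)$ uniformly in $x\in K$ and $t\in\ax$ ($L^p$-boundedness with $p>1$ implies uniform integrability). Condition 3) of the proposition, combined with Jensen applied to the concave map $y\mapsto y^{p\alpha'/\alpha}$, yields $M_1:=\sup_{y\in K,\,r\in[0,S]}E_y\tilde\phi(X_r)<\infty$. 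Combining the supermartingale inequality for $\tilde\phi$ (proved exactly as in the previous paragraph) with the strong Markov property applied at $\tau_K$ on $\{\tau_K\leq S\}$, and using $\tilde\phi\equiv 1$ on $K$, one obtains the one-step contraction
\[
E_y\tilde\phi(X_S) \;\leq\; e^{-p\alpha' S}\tilde\phi(y)+M_1,\qquad y\in\XX.
\]

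Iterating this inequality along the times $nS$ and controlling the residual time $r\in[0,S)$ by the same decomposition yields $\sup_{x\in K,\,t\in\ax}E_x\tilde\phi(X_t)<\infty$ (using that $\tilde\phi(x)=1$ for $x\in K$ absorbs the geometric series), which establishes condition 3). The main technical obstacle is precisely this bootstrap from the local-in-time control of condition 3) of the proposition to a global-in-time bound. Finally, for the last sentence of the proposition the verifications of conditions 2) and 3) of Theorem \ref{tA1} made no use of condition 1) of the proposition, so Theorem \ref{tA2} applies directly under the extra hypothesis that $X$ satisfies the extended Doeblin condition on $K$.
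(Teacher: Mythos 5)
Your proposal takes essentially the same route as the paper: verify the hypotheses of Theorem~\ref{tA1} (and of Theorem~\ref{tA2} when condition~1) is dropped) for $\phi(x)=E_xe^{\alpha'\tau_K}$, with condition~2) following from the pathwise identity $\tau_K=t+\tau_K^t$ on $\{\tau_K>t\}$, and condition~3) following by dominating a power of $\phi$ via Jensen by a function of the same type and then bootstrapping the local-in-time bound of the proposition's condition~3) to a global one. The paper's bootstrap lemma works with $\psi(x)=E_xe^{\alpha\tau_K}$ and iterates over time windows $[kS,(k+1)S]$, whereas your one-step contraction $E_y\tilde\phi(X_S)\le e^{-p\alpha'S}\tilde\phi(y)+M_1$ for $\tilde\phi(x)=E_xe^{p\alpha'\tau_K}$ is an equivalent (and arguably cleaner) reorganization of the same estimate.
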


This proposition demonstrates the difference between Theorems
\ref{tA1}, \ref{tA2} on one hand, and Proposition \ref{p21} on
another. Typically, a function $\phi$ of the type
$\phi(x)=E_xe^{\alpha\tau_K}$ neither belong to the domain of
$\Af$ nor satisfy condition of Proposition \ref{p21}. On the
other hand, Theorems \ref{tA1}, \ref{tA2} appear to be powerful
enough to handle the functions of such a type. This is important for our approach, since we would like to control the construction of a $\phi$-coupling  in the terms of the hitting times for the process $X$.

We prove  Theorems \ref{tA1}, \ref{tA2} and Proposition \ref{p22} in the Appendix.

\subsection{Semigroups generated by $X$: growth bounds and spectral properties of generators}

For a function $f:\XX\to \CC$, we denote
$$
T_t f(x)=\int_\XX f(y)P_t(x,dy), \quad t\in \XX, x\in\XX
$$
assuming the integrals to exist. Typically, the mapping $f\mapsto
T_tf$ forms a bounded linear operator in an appropriate
functional space.  We are mainly interested in the  functional
spaces $L_p\eqdef L_p^{\CC}(\XX,\pi), p\in(1,+\infty)$, but we
also consider some other auxiliary spaces. The
Chapman-Kolmogorov equation for the transition function
$P_t(x,dy)$ yields the semigroup property for the family
$\{T_t\}$: $T_{t+s}=T_tT_s, t,s\in \ax$. We assume process $X$ to
be stochastically continuous, which yields that $\{T_t\}$,
considered as a semigroup in $L_p$ with any $p\in(1,+\infty)$, is
strongly continuous. We denote by $A$ the generator of the
semigroup $\{T_t\}$. By the definition,
$$
Af\eqdef \lim_{t\to 0+}{1\over t}[T_tf-f],
$$
where the convergence holds in the sense of respective functional
space, and the domain of $A$ consists of all functions $f$ such
that the limit exists.

\begin{dfn}\label{d31} Let $\{T_t\}$ be a strongly continuous
semigroup of bounded linear operators on some complex Banach
space $\Xf$. A number $\gamma\in \Re$ is called
\begin{itemize}\item[a)] a \emph{spectral bound} for the gererator $A$ of
$\{T_t\}$, if every point $\lambda\in\CC$ with
$\mathrm{Re}\,\lambda>-\gamma$ belongs to the resolvent set of
$A$;

\item[b)] an (exponential) \emph{growth bound} for $\{T_t\}$, if there
exists $C\in\ax$ such that \be\label{32}\|T_tf\|_\Xf\leq C
e^{-\gamma t}\|f\|_\Xf,\quad  t\in \ax, f\in \Xf.\ee
\end{itemize}
\end{dfn}

The terminology introduced in Definition \ref{d31} differs
slightly from the standard one in the general spectral theory of
semigroups. Namely, the constant in the standard definition of a
growth bound may depend on $f$ (\cite{Nag86}, Chapter A-III).
However, this modified terminology appears to be more convenient
in our framework. We remark that the following condition is
equivalent to (\ref{32}) and, in some cases,  can be verified
more easily: \be\label{31}|\langle T_tf,g\rangle|\leq C
e^{-\gamma t}\|f\|_\Xf\|g\|_{\Xf^*},\quad t\in\ax, f\in \Xf, g\in
\Xf^*,\ee where $\Xf^*$ is the dual space for $\Xf$.

The following statement is quite standard, but, for the sake of completeness, we give the sketch of the proof here.

\begin{prop}\label{p31} If $\gamma\in\Re$ is a growth bound for $\{T_t\}$, then $\gamma$ is a
spectral bound for its generator.
\end{prop}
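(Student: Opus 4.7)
The plan is the standard semigroup-theoretic argument: for every $\lambda \in \CC$ with $\RE \lambda > -\gamma$, explicitly construct the resolvent of $A$ at $\lambda$ as the Laplace transform of the semigroup, and then verify that it inverts $\lambda I - A$. This shows that $\lambda$ lies in the resolvent set, which is exactly what is required.

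First, for $\lambda$ with $\RE \lambda > -\gamma$, I would define
\be\label{Rdef}
R_\lambda f \eqdef \int_0^\infty e^{-\lambda t} T_t f \, dt, \qquad f \in \Xf.
\ee
The growth bound (\ref{32}) gives $\|e^{-\lambda t} T_t f\|_\Xf \leq C e^{-(\RE \lambda + \gamma) t}\|f\|_\Xf$, and since $\RE \lambda + \gamma > 0$, the Bochner integral in (\ref{Rdef}) converges absolutely and defines a bounded operator $R_\lambda$ on $\Xf$ with $\|R_\lambda\|\leq C/(\RE \lambda + \gamma)$.

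Next I would verify that $R_\lambda = (\lambda I - A)^{-1}$. The key computation is to take $f \in \Xf$ and $h > 0$ and write
$$
\frac{T_h - I}{h} R_\lambda f = \frac{1}{h}\int_0^\infty e^{-\lambda t}(T_{t+h} - T_t) f \, dt = \frac{e^{\lambda h}-1}{h}\int_h^\infty e^{-\lambda s} T_s f\, ds - \frac{1}{h}\int_0^h e^{-\lambda t}T_t f\, dt,
$$
after the change of variable $s = t+h$ in the first integral. Letting $h \to 0+$ (using the strong continuity of $\{T_t\}$ for the second term and dominated convergence for the first) one obtains $A R_\lambda f = \lambda R_\lambda f - f$, so that $R_\lambda f \in \mathrm{Dom}(A)$ and $(\lambda I - A)R_\lambda = I$ on $\Xf$. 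A similar Fubini-type computation, using that $T_t$ commutes with $A$ on $\mathrm{Dom}(A)$, gives $R_\lambda(\lambda I - A) f = f$ for $f \in \mathrm{Dom}(A)$. Hence $\lambda$ belongs to the resolvent set of $A$, proving that $\gamma$ is a spectral bound.

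There is no real obstacle: the argument is textbook and relies only on the absolute convergence of the Laplace integral, which the growth bound (\ref{32}) supplies uniformly in $f$ thanks to the $f$-independent constant $C$ in Definition \ref{d31}(b). The only point that deserves a brief word in the write-up is the justification for interchanging $A$ (equivalently, the limit $h^{-1}(T_h - I)$) with the integral in (\ref{Rdef}); this is where strong continuity of $\{T_t\}$ and absolute convergence of the integral are both used.
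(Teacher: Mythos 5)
Your proposal is correct and follows essentially the same route as the paper: define $R_\lambda$ as the Laplace transform of the semigroup, use the growth bound to show it is a bounded operator with norm at most $C/(\RE\lambda+\gamma)$, and then verify by the standard difference-quotient computation that $R_\lambda=(\lambda I-A)^{-1}$. The paper compresses the last step to "by standard arguments," while you spell it out, but the argument is the same.
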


\begin{proof} Take any $\lambda\in\CC$ with
$\mathrm{Re}\,\lambda>-\gamma$ and consider the mapping
$$
R_\lambda:f\mapsto \int_0^\infty e^{-\lambda s}T_sf\,
ds\eqdef\lim_{S\to +\infty} \int_0^S e^{-\lambda s}T_sf\, ds.
$$
The integrals under the limit are defined in the Riemannian sense,
and the limit  exists in the sense of the norm convergence. The
operator $R_\lambda$ is bounded with its norm being dominated by
$C( \mathrm{Re}\, \lambda+\gamma)^{-1}$, where the constant $C$
comes from the definition of a growth bound. On the other hand,
by standard arguments, $T_tR_\lambda=R_\lambda T_t=e^{\lambda
t}\left(R_\lambda-\int_0^te^{-\lambda s}T_s\, ds\right)$ and
$R_\lambda$ is the inverse operator for $\lambda-A$, i.e.,
$\lambda$ belongs to the resolvent set of $A$.
\end{proof}

 For semigroups defined by a (conservative) Markov process $X$
in $L_p, p\in(1,+\infty)$, the point $\lambda=0$ is a trivial
eigenvalue with the corresponding eigenfunction $f_\lambda=\1$
(i.e., the function that equals 1 in every point). If this
eigenvalue is simple  and the rest of the spectrum of the
generator $A$ is separated from zero, then it is said that this
generator (resp., semigroup or process) possesses a {\it spectral
gap}. This motivates the following terminology. Denote, for
$p\in(1,+\infty)$,
$$
L_p^0=\{f\in
L_p^\CC(\XX,\pi):\int_Xfd\pi=0\}=\langle\1\rangle^\perp,
$$
where in the last expression $\1$ is interpreted as an element of
$L_p^*=L_q, q^{-1}+p^{-1}=1$. Since $\pi$ is an invariant measure
for $X$, one has
$$
\int_\XX T_tf(x)\pi(dx)=\int_\XX\int_\XX
f(y)P_t(x,dy)\pi(dx)=\int_\XX f(y)\, \pi(dy),
$$
which means, in particular, that $L_p^0$ is invariant under
$\{T_t\}$.

For a given $\gamma>0, p\in(1,+\infty)$, we say that process $X$
possesses either property
 $SG_p(\gamma)$ or property $GB_p(\gamma),$  if, for the restriction
 of its semigroup $\{T_t\}$ to the space $\Xf=L_p^0$, the
number $\gamma$ is a spectral bound or a  growth bound,
respectively. Also, we say that the process
possesses an exponential $L_p$ rate if
\be\label{330}\|T_tf\|_{p}\leq C e^{-\gamma t}\|f\|_{p},\quad t\in
\ax, f\in L_2^0\ee with some $C>0, \gamma >0$ (here and below, we denote $\|\cdot\|_p\eqdef
\|\cdot\|_{L_p}$).

Some authors (e.g. \cite{Chen00}) say that the process $X$
possesses an exponential $L_2$ rate if
\be\label{33}\|T_tf\|_{2}\leq e^{-\gamma t}\|f\|_{2},\quad t\in
\ax, f\in L_2^0\ee with some $\gamma>0$. This terminology does
not look to be perfectly adjusted with the matter of the problem
discussed above, because the constant $C$ in (\ref{330}) with
$p=2$ does not play an essential role in the asymptotic behavior
of the semigroup; in particular, the estimate (\ref{330}) is
already strong enough to provide existence of a spectral gap for
$X$.

On the other hand, it makes sense to consider estimate of the type (\ref{33}) separately. Let us express (\ref{33}) in the terms of the {\it Dirichlet
form} $\Ef$ associated with the process $X$. Recall that the Dirichlet form
$\Ef$ corresponding to the $L_2$-semigroup $\{T_t\}$ generated by
$X$ is  defined as  the completion of the  bilinear form
$$
Dom(A)\times Dom(A)\ni (f,g)\mapsto -(Af,g)_{L_2}
$$
with respect to the norm $\|\cdot\|_{\Ef,1}\eqdef
\Big[\|\cdot\|_{L_2}^2-(A\cdot,\cdot)_{L_2}\Big]^{1\over 2}$ (e.g.
\cite{MR92}, Chapter 2). It can be verified easily that, for
$c=\gamma^{-1}$, (\ref{33}) is equivalent to the functional
inequality \be\label{34} \int_\XX |f|^2d\pi-\left|\int_\XX f
d\pi\right|^2\leq c \,\Ef(f,f),\quad f\in Dom(\Ef),\ee called the
{\it Poincar\'e inequality}. The Poincar\'e inequality is one of
the most important in the field, and this motivates the interest
to the inequality (\ref{33}). One can say that (\ref{33}) is a
kind of a differential estimate, while (\ref{330}) with $p=2$ is
an integral one. In section 4 below we give an example which
demonstrates that these estimates are non-equivalent.

For a given $\gamma>0$, we say that
process $X$ possesses the property $PI(\gamma)$ if (\ref{33})
holds true. We have the following implications:
$$
PI(\gamma)\Rightarrow GB_2(\gamma),\quad  GB_p(\gamma) \Rightarrow
SG_p(\gamma).
$$
Examples are available, where a number being a spectral bound is
not a growth bound (\cite{Nag86}, Example 1.4, \cite{Chen00},
Example 2.3), and thus $SG_p(\gamma)\not\Rightarrow
GB_p(\gamma)$. As we have already mentioned,
$GB_2(\gamma)\not\Rightarrow PI(\gamma)$ (see section 4).
 Therefore, in general, each of three
properties formulated above requires a separate investigation.

\section{$L_p$ convergence rates and Poincar\'e inequality for a  process that admits an exponential $\phi$-coupling}

In this section, we assume that, for a given function $\phi$, the
process $X$ admits an exponential $\phi$-coupling.  We denote by
$C_\phi,\beta$ the constants from the definition of a
$\phi$-exponential coupling, and write $C$ for any constant which
can be, but is not, expressed explicitly. The value of the
constant $C$ can vary from line to line. We denote by $\pi$ the
unique invariant measure for $X$ and assume $\phi\in
L_1(\XX,\pi)$. This assumption is not restrictive;  it holds true
under conditions of either Theorem \ref{tA1}, Theorem \ref{tA2},
or Proposition \ref{p22} (see Remark \ref{rA1} in the Appendix).

We separate  our investigation into several parts. First, we
establish rates of convergence of the semigroups generated by $X$
in auxiliary spaces $L_{p,\phi}, L_{p,\phi}^*$. Then we consider
$L_p$-semigroups with arbitrary $p\in (1+\infty)$. Finally, we
investigate the $L_2$-semigroup, considering separately the cases
of a reversible and an irreversible (in time) process $X$ separately.

\subsection{Spaces $L_{p,\phi}, L_{p,\phi}^*, p\in(1,+\infty)$.}

For $p\in(1,+\infty)$, denote by $L_{p,\phi}, p\in(1,+\infty)$
the set of functions $f$ such that
$$
\|f\|_{p,\phi}\eqdef\left[\int_\XX \left|{f\over \phi^{1\over
q}}\right |^p \,d\pi\right]^{1\over p}<+\infty,
$$
where $q$ is adjoint to $p$, i.e. $p^{-1}+q^{-1}=1$. The set
$L_{p,\phi}$ is a Banach space with the norm
$\|\cdot\|_{p,\phi}$. The dual space $L_{p,\phi}^*$ to
$L_{p,\phi}$ with respect to the natural duality $(f,g)\mapsto
\langle f,g\rangle\eqdef \int_\XX f\bar gd\pi$ coincides  with
the space of functions $f$ such that
$$
\|f\|_{p,\phi}^*\eqdef\left[\int_\XX |f|^q
\phi\,d\pi\right]^{1\over q}<+\infty.
$$

The space   $L_{p,\phi}^*$ is a subset of $L_q$ since $\phi\geq
1$. On the other hand, $\phi$ may be unbounded, and in this case
$L_{p,\phi}$ is strictly larger than $L_p$.  Nevertheless, in any
case $L_{p,\phi}\subset L_1$ because
$$
\int_\XX|f|d\pi\leq \left[\int_\XX\left|{f\over \phi^{1\over
q}}\right|^p\,d\pi\right]^{1\over p} \left[\int_\XX |\phi^{1\over
q}|^q\,d\pi\right]^{1\over q}=\|f\|_{p,\phi}\|\phi\|_{L_1}^{1\over
q}.
$$
Define $L_{p,\phi}^0$ and $L_{p,\phi}^{*,0}$ as the subspaces of
the elements $f$ of $L_{p,\phi}$ and $L_{p,\phi}^{*}$, respectively,
such that  $\int_Xf\,d\pi=0$. Clearly, $L_{p,\phi}^{*,0}$ is the
dual space to $L_{p,\phi}^0$ w.r.t. duality $\langle
\cdot,\cdot\rangle$.

\begin{thm}\label{t41} For every $p\in (1,+\infty)$, $\{T_t\}$ is a semigroup of bounded
operators in $L_{p,\phi}$. The subspace $L_{p,\phi}^0$ is
invariant w.r.t. to $\{T_t\}$, and ${\beta\over
q}=\beta-{\beta\over p}$ is a growth bound for the restriction of
$\{T_t\}$ on $L_{p,\phi}^0$. \end{thm}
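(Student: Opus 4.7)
The strategy is to prove the growth bound on $L_{p,\phi}^0$ first, directly from the exponential $\phi$-coupling, and then derive boundedness on all of $L_{p,\phi}$ by splitting off the mean. The semigroup property itself is immediate from Chapman--Kolmogorov, and invariance of $L_{p,\phi}^0$ under $\{T_t\}$ is an immediate consequence of invariance of $\pi$, since $\int_\XX T_tf\,d\pi=\int_\XX f\,d\pi$ for $f\in L_1$ and $L_{p,\phi}\subset L_1$ is already established.

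For the key step, fix $f\in L_{p,\phi}^0$ and $x\in\XX$, and let $Z=(Z^1,Z^2)$ be the $(\delta_x,\pi)$-coupling from Definition \ref{d13}. Since $Z^1_t\sim P_t(x,\cdot)$ and $Z^2_t\sim\pi$, and since $\int f\,d\pi=0$, one has $T_tf(x)=E[f(Z^1_t)-f(Z^2_t)]=E[(f(Z^1_t)-f(Z^2_t))\1_{Z^1_t\neq Z^2_t}]$. I would then apply H\"older with exponents $p,q$ to each summand in the factorized form $|f(Z^i_t)|=\bigl(|f(Z^i_t)|/\phi(Z^i_t)^{1/q}\bigr)\cdot\phi(Z^i_t)^{1/q}$, writing the indicator with the $\phi^{1/q}$ factor. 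Using that $\1_{\cdot}^q=\1_\cdot$, this produces
\[
E|f(Z^i_t)|\1_{Z^1_t\neq Z^2_t}\le\bigl(E\,|f(Z^i_t)|^p\phi(Z^i_t)^{-p/q}\bigr)^{1/p}\bigl(E\,\phi(Z^i_t)\1_{Z^1_t\neq Z^2_t}\bigr)^{1/q}.
\]
The second factor is dominated by $(C_\phi e^{-\beta t}\phi(x))^{1/q}$ by the coupling property. The first factor, for $i=1$, equals $[T_t(|f|^p\phi^{-p/q})(x)]^{1/p}$, and for $i=2$ equals $\|f\|_{p,\phi}$ since $Z^2_t\sim\pi$.

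After dividing by $\phi(x)^{1/q}$ and using $(a+b)^p\le 2^{p-1}(a^p+b^p)$, I would then integrate against $\pi(dx)$. The $\pi$-invariance of $T_t$ yields $\int_\XX T_t(|f|^p\phi^{-p/q})\,d\pi=\|f\|_{p,\phi}^p$, and the constant $\|f\|_{p,\phi}^p$ contribution integrates trivially. This gives
\[
\|T_tf\|_{p,\phi}^p\le 2^p\,C_\phi^{p/q}\,e^{-\beta pt/q}\|f\|_{p,\phi}^p,
\]
i.e.\ $\|T_tf\|_{p,\phi}\le 2C_\phi^{1/q}e^{-\beta t/q}\|f\|_{p,\phi}$, which is exactly the growth bound $\beta/q=\beta-\beta/p$.

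To extend boundedness to all of $L_{p,\phi}$, for $f\in L_{p,\phi}$ set $c=\int_\XX f\,d\pi$ (well defined since $L_{p,\phi}\subset L_1$). Then $f-c\in L_{p,\phi}^0$, $T_tc=c$, and since $\phi\ge 1$ constants lie in $L_{p,\phi}$ with $\|c\|_{p,\phi}\le|c|\le\|f\|_{p,\phi}\|\phi\|_{L_1}^{1/q}$. Combining with the estimate for $f-c$ yields $\|T_tf\|_{p,\phi}\le C\|f\|_{p,\phi}$ uniformly in $t$. The main delicate point is the H\"older splitting in the coupling step: the weights $\phi^{-1/q}$ and $\phi^{1/q}$ are forced by the duality $p^{-1}+q^{-1}=1$ in such a way that the $\phi$-mass appearing in the coupling bound is precisely what the dual exponent of $\|\cdot\|_{p,\phi}$ demands, which is what makes the rate $\beta/q$ come out naturally.
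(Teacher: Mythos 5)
Your proposal is correct and follows essentially the same route as the paper: write $T_tf(x)=E[(f(Z^1_t)-f(Z^2_t))\1_{Z^1_t\neq Z^2_t}]$, use H\"older with the $\phi^{1/q}$ weight so the coupling bound $E[\phi(Z^1_t)+\phi(Z^2_t)]\1_{Z^1_t\neq Z^2_t}\le C_\phi e^{-\beta t}\phi(x)$ supplies the $e^{-\beta t/q}$ factor, then integrate against $\pi$ and invoke invariance of $\pi$ to collapse $\int_\XX T_t(|f|^p\phi^{-p/q})\,d\pi$ to $\|f\|_{p,\phi}^p$. The only cosmetic difference is the order of operations: you split by the triangle inequality first and apply H\"older term by term against $\phi(Z^i_t)^{1/q}$, whereas the paper applies H\"older once to the full difference against $[\phi(Z^1_t)+\phi(Z^2_t)]^{1/q}$ and then uses $\phi(Z^1_t)+\phi(Z^2_t)\ge\max(\phi(Z^1_t),\phi(Z^2_t))$ with the convexity inequality $|a-b|^p\le 2^{p-1}(|a|^p+|b|^p)$; both yield the same constant $2^pC_\phi^{p/q}e^{-\beta pt/q}$.
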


\begin{proof} In the representation $L_{p,\phi}=\langle \1\rangle\bigoplus L_{p,\phi}^0$, both summands
are invariant subspaces for the semigroup $\{T_t\}$.  Clearly,
every  $T_t$ is an identity operator on the one-dimensional
subspace $\langle \1\rangle$. Let us investigate the restriction
of $\{T_t\}$ on $L_{p,\phi}^0$.

Let us prove that, for every $f\in L_{p,\phi}^0$ and $x\in \XX$,
\be\label{41} |T_tf(x)|^p\leq 2^{p-1}\Cs_\phi^{p\over q}
e^{-{\beta p\over q}t}\phi^{p\over q}(x)\left(T_t\left({|f|^p\over
\phi^{p\over q}}\right)(x)+ \|f\|^p_{p,\phi}\right). \ee Consider
an exponential $\phi$-coupling $Z=(Z^1,Z^2)$ that exists by
assumption. We have
$$
T_tf(x)=T_tf(x)-\int_\XX f(y)\pi(dy)=E\Big[f(Z^1_t)-f(Z^2_t)\Big],
$$
here in the last equality we have used that $\pi$ is an invariant
measure and thus $Z_t^2$ has the distribution $\pi$ for every
$t$. Then
$$\begin{aligned}
|T_tf(x)|^p&=\left|E\Big[f(Z^1_t)-f(Z^2_t)\Big]\1_{Z_t^1\not=Z_t^2}\right|^p\leq
E{|f(Z^1_t)-f(Z^2_t)|^p\over [\phi(Z^1_t)+\phi(Z_t^2)]^{p\over
q}}\\
&\times\left(E\Big[\phi(Z^1_t)+\phi(Z_t^2)\Big]\1_{Z_t^1\not=Z_t^2}\right)^{p\over
q}\leq
 C_\phi^{p\over q} e^{-{\beta p\over q}t}\phi^{p\over q}(x)E{|f(Z^1_t)-f(Z^2_t)|^p\over [\phi(Z^1_t)+\phi(Z_t^2)]^{p\over
q}}.
\end{aligned}
$$
We have $\phi(Z^1_t)+\phi(Z_t^2)\geq \max
\Big(\phi(Z^1_t),\phi(Z_t^2)\Big)$. Hence,
$$
E{|f(Z^1_t)-f(Z^2_t)|^p\over [\phi(Z^1_t)+\phi(Z_t^2)]^{p\over
q}}\leq 2^{p-1}E\left[{|f(Z^1_t)|^p \over \phi^{p\over
q}(Z^1_t)}+{|f(Z^2_t)|^p \over \phi^{p\over
q}(Z^2_t)}\right]=2^{p-1} T_t\left({|f|^p\over \phi^{p\over
q}}\right)(x)+ 2^{p-1}\|f\|^p_{p,\phi},
$$
which proves (\ref{41}).

As a corollary, we get the following  estimate valid for every
$f\in L_{p,\phi}^0$: \be\label{42}\ba \|f\|_{p,\phi}^p&=\int_\XX
|T_t f(x)|^p\phi^{-{p\over q}}(x)\pi(dx)\leq
2^{p-1}C_\phi^{p\over q} e^{-{\beta p\over q}t}\int_\XX
\left(T_t\left({|f|^p\over \phi^{p\over q}}\right)(x)+
\|f\|^p_{p,\phi}\right)\pi(dx)\\
& = 2^{p-1}C_\phi^{p\over q} e^{-{\beta p\over q}t}\left(\int_\XX
{|f(x)|^p\over \phi^{p\over
q}(x)}\pi(dx)+\|f\|^p_{p,\phi}\right)=2^{p}C_\phi^{p\over q}
e^{-{\beta p\over q}t}\|f\|^p_{p,\phi}\ea \ee (here, the
invariance property for $\pi$ is used).

By (\ref{42}), every $T_t$ is bounded on $L_{p,\phi}^0$, and thus
it is bounded on whole $L_{p,\phi}.$ Moreover, (\ref{42})
immediately implies  inequality (\ref{32}) from the definition of
a growth bound.
\end{proof}

By standard duality arguments, Theorem \ref{t41} yields the
following corollary for the adjoint semigroup $\{T_t^*\}$.

\begin{cor}\label{c41}
For every $p\in (1,+\infty)$, $\{T_t^*\}$ is a semigroup of
bounded operators in $L_{p,\phi}^*$. The subspace
$L_{p,\phi}^{*,0}$ is invariant w.r.t. to $\{T_t\}$, and
${\beta\over q}=\beta-{\beta\over p}$ is a  growth bound for the
restriction of $\{T_t\}$ on $L_{p,\phi}^{*,0}$.
\end{cor}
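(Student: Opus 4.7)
The plan is to deduce the corollary directly from Theorem \ref{t41} by the standard Banach-space duality argument, using that the pairing $\langle f,g\rangle=\int_\XX f\bar g\,d\pi$ identifies $L_{p,\phi}^*$ with the continuous dual of $L_{p,\phi}$. Under this identification each $T_t^*$ is the Banach-space adjoint of $T_t\in\mathcal{L}(L_{p,\phi})$, which is bounded by Theorem \ref{t41}; consequently $\{T_t^*\}$ is a semigroup of bounded operators on $L_{p,\phi}^*$, with the semigroup property passing from $T_{t+s}=T_tT_s$ via $(T_tT_s)^*=T_s^*T_t^*$.

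To verify invariance of $L_{p,\phi}^{*,0}$ under $\{T_t^*\}$, I would use conservativity of the process (so that $T_t\1=\1$) to compute, for every $g\in L_{p,\phi}^{*,0}$,
$$
\int_\XX T_t^* g\,d\pi=\langle T_t^* g,\1\rangle=\langle g,T_t\1\rangle=\langle g,\1\rangle=\int_\XX g\,d\pi=0.
$$
For the growth bound, I would invoke the direct-sum decomposition $L_{p,\phi}=\langle\1\rangle\oplus L_{p,\phi}^0$ already used in the proof of Theorem \ref{t41}: the annihilator of $\langle\1\rangle$ inside $L_{p,\phi}^*$ is precisely $L_{p,\phi}^{*,0}$, and this annihilator is canonically isometric to the dual of the Banach space $L_{p,\phi}^0$. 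Since the operator norm of a bounded linear operator and that of its adjoint coincide, the estimate (\ref{42}) for $T_t$ on $L_{p,\phi}^0$, namely $\|T_tf\|_{p,\phi}\leq Ce^{-\beta t/q}\|f\|_{p,\phi}$, transfers verbatim to $\|T_t^* g\|^*_{p,\phi}\leq Ce^{-\beta t/q}\|g\|^*_{p,\phi}$ for $g\in L_{p,\phi}^{*,0}$, which is exactly the required growth bound $\beta/q$.

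There is no serious obstacle here; the whole argument reduces to the textbook fact that growth bounds transfer to the adjoint semigroup on the dual space. The one bookkeeping point that warrants explicit verification is the isometric identification $L_{p,\phi}^{*,0}\cong(L_{p,\phi}^0)^*$, but this is immediate from the direct-sum decomposition together with the explicit description of $L_{p,\phi}^*$ recorded when these spaces were introduced.
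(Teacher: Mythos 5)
Your argument is correct and is exactly what the paper means by ``standard duality arguments'': identify $L_{p,\phi}^*$ with the dual of $L_{p,\phi}$ under the pairing $\langle f,g\rangle=\int f\bar g\,d\pi$, observe that the Banach-space adjoint of $T_t\in\mathcal L(L_{p,\phi})$ is precisely $T_t^*$ (so boundedness and the reversed semigroup law follow), use $T_t\1=\1$ to see that $L_{p,\phi}^{*,0}=\langle\1\rangle^\circ$ is $T_t^*$-invariant, and transfer the estimate (\ref{42}) by the norm equality for adjoints. One small imprecision that does not affect the conclusion: the canonical map from the annihilator $L_{p,\phi}^{*,0}$ onto $(L_{p,\phi}^0)^*$ (restriction of functionals) is a Banach-space isomorphism but not in general an isometry, since its norm is controlled by the norm of the projection $\Pi:L_{p,\phi}\to L_{p,\phi}^0$ rather than by $1$. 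Consequently the bound transfers with a possibly enlarged constant $C$ rather than ``verbatim''; as the definition of a growth bound in Definition~\ref{d31} permits an arbitrary constant, this is harmless, but the word ``isometric'' should be weakened to ``isomorphic'' (or one should argue via $\|T_t^*g\|_{p,\phi}^*=\sup_{\|f\|_{p,\phi}\le1}|\langle g,T_t\Pi f\rangle|\le\|\Pi\|\,C'e^{-\beta t/q}\|g\|_{p,\phi}^*$ directly).
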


\subsection{Spaces $L_{p},  p\in(1,+\infty)$.}

Theorem \ref{t41}, in fact, provides that the generator of
$\{T_t\}$, considered as a semigroup in $L_{p,\phi}$, possesses a
spectral gap. The following simple corollary shows that, in a particular case,  this yields
existence of a spectral gap for the generator of the respective  $L_p$-semigroup.

\begin{cor}\label{c43} If the function $\phi$ is bounded, then the process $X$ satisfies $GB_p\left(\beta-{\beta\over p}\right), p\in (1,+\infty)$.
\end{cor}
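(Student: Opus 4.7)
The plan is to reduce the statement immediately to Theorem \ref{t41} by observing that, when $\phi$ is bounded, the norms $\|\cdot\|_p$ and $\|\cdot\|_{p,\phi}$ are equivalent, so that $L_p$ and $L_{p,\phi}$ coincide as Banach spaces (with an equivalent norm), and likewise $L_p^0=L_{p,\phi}^0$.

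More concretely, set $M=\sup_{\XX}\phi<+\infty$. Since $\phi\geq 1$, we have $1\leq \phi^{p/q}\leq M^{p/q}$, and therefore for every measurable $f$
\be\label{pf431}
M^{-1/q}\|f\|_p\leq \|f\|_{p,\phi}\leq \|f\|_p.
\ee
In particular $L_p=L_{p,\phi}$ as sets, the two norms are equivalent, and $L_p^0=L_{p,\phi}^0$ since the mean-zero condition is the same in both spaces. By Theorem \ref{t41}, $L_{p,\phi}^0$ is invariant under $\{T_t\}$ and $\beta/q$ is a growth bound for the restriction, i.e. there exists $C>0$ such that $\|T_tf\|_{p,\phi}\leq Ce^{-\beta t/q}\|f\|_{p,\phi}$ for every $f\in L_{p,\phi}^0$ and $t\in\ax$.

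Combining this with the two-sided comparison \eqref{pf431}, for $f\in L_p^0=L_{p,\phi}^0$ we obtain
$$
\|T_tf\|_p\leq M^{1/q}\|T_tf\|_{p,\phi}\leq CM^{1/q}e^{-\beta t/q}\|f\|_{p,\phi}\leq CM^{1/q}e^{-\beta t/q}\|f\|_p.
$$
Since $\beta/q=\beta-\beta/p$, this is exactly the inequality in the definition of a growth bound on $L_p^0$ with the constant $CM^{1/q}$, so $X$ satisfies $GB_p(\beta-\beta/p)$.

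There is no genuine obstacle here: the whole content is packaged in Theorem \ref{t41}, and boundedness of $\phi$ is used only to pass between the weighted and unweighted $L_p$-norms. The only point that requires a brief remark is that the mean-zero subspaces coincide, which is immediate because both duality pairings are against the same measure $\pi$ and $\1\in L_{p,\phi}^*\cap L_q$.
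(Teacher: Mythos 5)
Your proof is correct and takes exactly the same route as the paper: the paper's entire proof is the one-line observation that, since $1\leq\phi\leq C$, the norms $\|\cdot\|_{p,\phi}$ and $\|\cdot\|_p$ are equivalent, from which the conclusion follows by Theorem \ref{t41}. You have merely written out the comparison constants $M^{\pm 1/q}$ and the identity $\beta/q=\beta-\beta/p$ explicitly.
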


\begin{proof} Since $1\leq \phi\leq C$, the norms $\|\cdot\|_{p,\phi}$ and $\|\cdot\|_p$ are equivalent.\end{proof}

However, the  general situation is more complicated, and under conditions of Theorem \ref{t41} respective $L_p$ generators may fail to possess
 a spectral gap (see
section 4). Here we provide existence of a spectral gap  under
additional assumptions formulated in the terms of the {\it dual
process} $X^*$ to the Markov process $X$.

Recall that if $\pi$ is  an invariant measure for the Markov
process $X$, then, on appropriate probability space, a stationary
process $\tilde X_t, t\in \Re$ can be constructed in such a way
that $\tilde X_0\sim \pi$ and $\tilde X$ is a Markov process with
the transition function $P_t(x,dy)$. The process $X^*_t\eqdef
\tilde X_{-t}, t\in\Re$ is again a time-homogeneous Markov
process. The process $X^*$ is called the dual process for $X$. By
stationarity,
$$
\langle T_tf,g\rangle=Ef(\tilde X_t)\overline{g(\tilde X_0)}=E
f(\tilde X_0)\overline{g(\tilde
X_{-t})}=Ef(X^*_0)\overline{g(X_t^*)}, \quad f\in L_p, g\in L_q,
$$ hence the adjoint semigroup $\{T_t^*\}$ for the semigroup
$\{T_t\}$ generated by $X$ in $L_p$ coincides with the semigroup
generated by $X^*$ in $L_q$.

For a functions $\phi,\psi:\XX\to [1,+\infty)$,  we write
$\phi\asymp\psi$ if
$$
\inf_x{\phi(x)\over \psi(x)}>0,\quad  \sup_x{\phi(x)\over
\psi(x)}<+\infty.
$$

\begin{thm}\label{t43} Assume that there exist functions $\phi$ and $\phi^*$ such
that the process $X$ admits an exponential $\phi$-coupling, the
dual process $X^*$ admits an exponential $\phi^*$-coupling, and
$\phi\asymp\phi^*$.

Then, for every $p\in[2,+\infty)$ and $\gamma<{\beta \over
2p-1}$, process  $X$ satisfies $GB_p\left(\gamma\right)$.
\end{thm}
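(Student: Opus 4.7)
The plan is to combine two growth bounds on weighted Lebesgue spaces — one arising from the $\phi$-coupling of $X$ via Theorem \ref{t41}, the other from the $\phi^*$-coupling of $X^*$ via Corollary \ref{c41} — by a single Hölder inequality, and then conclude by a truncation and density argument. Writing $q=p/(p-1)$, Theorem \ref{t41} applied to $X$ yields
\[
\|T_tf\|_{p,\phi}\le Ce^{-\beta t/q}\|f\|_{p,\phi},\qquad f\in L_{p,\phi}^0;
\]
while Corollary \ref{c41} applied to $X^*$ at exponent $\tilde p=q$ — noting that the adjoint of the $X^*$-semigroup coincides with $\{T_t\}$ — yields the growth bound $\beta/p$ for $\{T_t\}$ on $L_{q,\phi^*}^{*,0}$. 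Since $\phi\asymp\phi^*$ and the norm $\|\cdot\|_{q,\phi^*}^{*}=\bigl(\int|\cdot|^p\phi^*\,d\pi\bigr)^{1/p}$ is equivalent to the $L^p(\phi\,d\pi)$ norm, this reads
\[
\Bigl(\int|T_tf|^p\phi\,d\pi\Bigr)^{1/p}\le Ce^{-\beta t/p}\Bigl(\int|f|^p\phi\,d\pi\Bigr)^{1/p}
\]
for every mean-zero $f\in L^p(\phi\,d\pi)$.

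The coupling step is the pointwise identity $|T_tf|^p=\bigl(|T_tf|\phi^{-1/q}\bigr)\cdot\bigl(|T_tf|^{p-1}\phi^{1/q}\bigr)$ followed by Hölder's inequality with exponents $p$ and $q$. Using $p/q=p-1$ and $(p-1)q=p$, this produces
\[
\|T_tf\|_p^p\le\|T_tf\|_{p,\phi}\cdot\|T_tf\|_{L^p(\phi\,d\pi)}^{p-1};
\]
substituting the two decay estimates above, one obtains, for $f\in L_p^0\cap L^p(\phi\,d\pi)$,
\[
\|T_tf\|_p^p\le Ce^{-2\beta(p-1)t/p}\,\|f\|_{p,\phi}\cdot\|f\|_{L^p(\phi\,d\pi)}^{p-1}.
\]

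To pass from this to a bound in terms of $\|f\|_p$ alone I would truncate: for bounded $f\in L_p^0$, set $f_N=f\,\1_{\{\phi\le N\}}$ with a mean-zero correction. Then $\|f_N\|_{L^p(\phi\,d\pi)}\le N^{1/p}\|f\|_p$, and Markov's inequality together with $\phi\in L^1(\pi)$ (cf.\ Remark \ref{rA1}) gives $\|f-f_N\|_p\le\|f\|_\infty\bigl(\|\phi\|_{L^1}/N\bigr)^{1/p}$. Splitting $\|T_tf\|_p\le\|T_tf_N\|_p+\|f-f_N\|_p$ and balancing the factor $N^{(p-1)/p^2}e^{-2\beta(p-1)t/p^2}$ produced by the first term against the factor $N^{-1/p}$ in the second fixes $N(t)=e^{2\beta(p-1)t/(2p-1)}$, yielding an exponential rate of $2\beta(p-1)/\bigl((2p-1)p\bigr)$, which dominates $\beta/(2p-1)$ for $p\ge 2$; any $\gamma<\beta/(2p-1)$ is therefore achievable.

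The main obstacle I anticipate is converting this estimate — which still carries a $\|f\|_\infty$ dependence coming from the tail term $\|f-f_N\|_p$ — into a clean operator-norm bound on $L_p^0$. This calls for a density/limiting argument exploiting the $L_p$-contractivity of $\{T_t\}$ and a refined truncation scheme avoiding the $L^\infty$ norm (for instance, combining the spatial truncation $\{\phi\le N\}$ with a value truncation $\{|f|\le M\}$ and simultaneously optimizing both parameters) in order to propagate the bound from bounded mean-zero functions to all of $L_p^0$.
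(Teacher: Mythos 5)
Your reduction to Theorems \ref{t41} and Corollary \ref{c41} is correct, and the H\"older combination is a genuinely different (and cleaner) way to merge the two coupling estimates than what the paper does: from $\|T_tf\|_{p,\phi}\le Ce^{-\beta t/q}\|f\|_{p,\phi}$ and (using the $\phi^*$-coupling of $X^*$ plus $\phi\asymp\phi^*$) $\|T_tf\|_{L^p(\phi\,d\pi)}\le Ce^{-\beta t/p}\|f\|_{L^p(\phi\,d\pi)}$, the factorization $|T_tf|^p=(|T_tf|\phi^{-1/q})\cdot(|T_tf|^{p-1}\phi^{1/q})$ indeed yields $\|T_tf\|_p^p\le Ce^{-2\beta(p-1)t/p}\,\|f\|_{p,\phi}\,\|f\|_{L^p(\phi\,d\pi)}^{p-1}$ for mean-zero $f\in L^p(\phi\,d\pi)$, and your rate bookkeeping in the balancing step is right.

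The gap you flag at the end is, however, genuine and not closable by the refinements you propose. The problem is that the combined estimate involves $\|f\|_{L^p(\phi\,d\pi)}$, which is not dominated by $\|f\|_p$ (indeed by reverse H\"older one has $\|f\|_p^p\le\|f\|_{p,\phi}\|f\|_{L^p(\phi\,d\pi)}^{p-1}$, i.e.\ the inequality goes the wrong way), and truncation trades this off against a tail that is not controllable by $\|f\|_p$. Concretely, with the spatial cut $f_N=f\,\1_{\{\phi\le N\}}$, a unit vector $f\in L_p^0$ supported entirely on $\{\phi>N\}$ gives $\|f-f_N\|_p=1$ no matter how $N=N(t)$ is chosen, so the second term in your splitting never decays; and the two-parameter cut $f_{N,M}$ fails because $\int_{|f|>M}|f|^p\,d\pi$ admits no uniform rate in $M$ over the unit ball of $L_p$. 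Nor can the $\|f\|_\infty$-dependent bound you do obtain be upgraded by density: for each fixed bounded $f$ you get exponential decay of $\|T_tf\|_p$ with a constant depending on $\|f\|_\infty$, but that is strictly weaker than the operator-norm bound $GB_p(\gamma)$ you are after, and taking limits in $f$ only propagates an inequality that holds with a uniform constant. The paper avoids this by never truncating $f$ alone: it works with the bilinear form $\langle T_tf,g\rangle$ and cuts \emph{both} $f$ and $g$ into pieces $f_k=f\1_{I_k}$, $g_j=g\1_{I_j}$ along the time-dependent shells $I_k=\{e^{\gamma kt}\le\phi<e^{\gamma(k+1)t}\}$. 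The resulting double sum is split into a diagonal part, an upper-triangular part (handled by the weighted estimate for $X$), and a lower-triangular part (handled by the weighted estimate for $X^*$ written via $\langle T_tf_k,g_{k+r}\rangle=\langle T_t^*g_{k+r},f_k\rangle$), and the bookkeeping closes precisely because $\sum_k\|f_k\|_p^p=\|f\|_p^p$ and $\sum_j\|g_j\|_q^q=\|g\|_q^q$, so H\"older on the indices $k,j$ returns $\|f\|_p\|g\|_q$. That bilateral, multi-scale decomposition — rather than a single or two-parameter cut of $f$ — is the missing ingredient in your argument; the non-mean-zero pieces $f_k,g_j$ also require the extra terms $\int f_k\,d\pi\int\bar g_j\,d\pi$ to be estimated using $\phi\in L^1(\pi)$, which is another point your sketch omits.
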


\begin{proof} We will show that \be\label{44} |\langle
T_tf,g\rangle|\leq C e^{-\gamma t}\|f\|_p\|g\|_q, \quad f\in
L_p^0,g\in L_q^0, t\in[1,+\infty).\ee Since $T_t$ is a
contraction semigroup in $L_p$, this will provide that (\ref{31})
holds true for the restriction of $\{T_t\}$ to $L_p^0$ (with some
other constant $C$), and thus will prove the required statement.

Let us verify first that \be\label{45} \left|\langle
T_tf,g\rangle-\int_\XX f\,d\pi\int_\XX \bar g\, d\pi\right|\leq C
e^{-{\beta\over p} t}\|f\|_{p,\phi}\|g\|_{p,\phi}^*,\quad  f\in
L_{p,\phi}, g\in L_{p,\phi}^{*}. \ee For $f\in L_{p,\phi}^0, g\in
L_{p,\phi}^{*,0}$, inequality (\ref{45}) with $C=2C_\phi^{1\over
q}$ follows from (\ref{42}). For arbitrary $f\in L_{p,\phi}, g\in
L_{p,\phi}^{*}$, one has
$$
\langle T_tf,g\rangle-\int_\XX f\,d\pi\int_\XX \bar g\,
d\pi=\langle \Pi f, \Pi g\rangle,
$$
where $\Pi f\eqdef f-\int_\XX f\, d\pi$. Since $\Pi$ is bounded
both as an operator $L_{p,\phi}\to L_{p,\phi}^0$ and as an
operator $L_{p,\phi}^*\to L_{p,\phi}^{*,0}$, this yields
(\ref{45}) in the general case.

Let us proceed with the proof of (\ref{44}). Take $\gamma\in
\left(0,{\beta\over 2p-1}\right).$ Denote $I_k=\{x: e^{\gamma
kt}\leq \phi(x)< e^{\gamma (k+1)t}\},f_k=f I_k, g_k=g I_k, k\geq
0$. We have \be\label{46} \ba\langle
T_tf,g\rangle&=\sum_{k,j=0}^\infty\langle T_tf_k,g_j\rangle=
\sum_{k=0}^\infty \langle T_tf_k,g_k\rangle\\
&+\sum_{r=1}^\infty\sum_{k=0}^\infty\langle
T_tf_{k+r},g_{k}\rangle+\sum_{r=1}^\infty\sum_{k=0}^\infty\langle
T_tf_k,g_{k+r}\rangle.\ea \ee Let us estimate the summands in the
right hand side of (\ref{46}) separately.

We  have from (\ref{45})
$$
|\langle T_tf_k,g_k\rangle|\leq \left|\int_\XX
f_k\,d\pi\right|\left|\int_\XX g_k\, d\pi\right|+C
e^{-{\beta\over p} t}\|f_k\|_{p,\phi}\|g_k\|_{p,\phi}^{*}.
$$
By the construction, $g_k=0$ on the set
$\{\phi>e^{\gamma(k+1)t}\}$, thus
$$
\|g_k\|_{p,\phi}^*=\left[\int_\XX |g_k|^q\phi\,
d\pi\right]^{1\over q}\leq \|g_k\|_{q}\cdot e^{{\gamma(k+1)\over
q}t}.$$ Analogously, $$\|f_k\|_{p,\phi}=\left[\int_\XX
|f_k|^p\phi^{-{p\over q}}\, d\pi\right]^{1\over p}\leq
\|f_k\|_{p}\cdot e^{-{\gamma k\over q}t}.$$ For $k\geq 1$, we have
$$
\left|\int_\XX f_k\, d\pi\right|=\left|\int_{I_k} f\,
d\pi\right|\leq \|f\|_p\pi^{1\over q}(I_k)\leq
\|f\|_p\|\phi|_1^{1\over q}e^{-{\gamma k\over q}t}.
$$
For $k=0$,  since $\int_\XX f\, d\pi=0$,
$$
\left|\int_\XX f_k\, d\pi\right|=\left|\int_{\XX\backslash I_0}
f\, d\pi\right|\leq \|f\|_p\pi^{1\over q}(\XX\backslash I_0)\leq
\|f\|_p\|\phi|_1^{1\over q}e^{-{\gamma \over q}t}.
$$
Analogously,
$$
\left|\int_\XX g_k\, d\pi\right|\leq \|g_k\|_q\|\phi|_1^{1\over
p}\min[e^{-{\gamma k\over p}t},e^{-{\gamma \over p}t}].
$$
Therefore,
$$
\sum_{k=0}^\infty |\langle T_tf_k,g_k\rangle|\leq C(e^{-\gamma
t}+e^{-{\beta\over p}  t+{\gamma \over
q}t})\sum_{k=0}^\infty\|f_k\|_p\|g_k\|_q\leq
$$
$$
\leq C(e^{-\gamma t}+e^{-{\beta\over p} t+{\gamma \over q}t})
\left[\sum_{k=0}^\infty\|f_k\|_p^p\right]^{1\over
p}\left[\sum_{k=0}^\infty\|g_k\|_q^q\right]^{1\over
q}=C(e^{-\gamma t}+e^{-{\beta\over p} t+{\gamma \over
q}t})\|f\|_p\|g\|_q.
$$
In the last equality, we have used that the family $\{I_k\}$ is
disjoint, and hence
$$
\sum_k\|f_k\|_p^p=\sum_k\int_{I_k}|f|^p\, d\pi=\|f\|_p^p,\quad
\sum_k\|g_k\|_q^q=\sum_k\int_{I_k}|g|^q\, d\pi=\|g\|_q^q.
$$
By the choice of $\gamma$, we have ${\beta\over p}-{\gamma\over
q}>\gamma\left({2p-1\over p}-{1\over
q}\right)=\gamma\left(2-p^{-1}-q^{-1}\right)=\gamma$. Therefore,
finally, \be\label{47}\sum_{k=0}^\infty |\langle
T_tf_k,g_k\rangle|\leq Ce^{-\gamma t}\|f\|_p\|g\|_q, \quad f\in
L_p^0,g\in L_q^0, t\in\ax. \ee

Analogously, for every $k\geq 0,r\geq 1$, we have
$$
\ba |\langle T_tf_{k+r},g_k\rangle|&\leq C\|f_{k+r}\|_p
\|g_k\|_q\left(e^{-{\gamma(k+r)\over q}t}\min(e^{-{\gamma\over p}
t},e^{-{\gamma k\over p} t})+e^{-{\beta\over p}t+{\gamma(k+1)\over
q}t-{\gamma(k+r)\over q}t} \right)\\
 &\leq C\|f_{k+r}\|_p \|g_k\|_q e^{-\gamma t}e^{-{\gamma(r-1)\over
q}t}.\ea
$$
For every given $r\geq 1$,
$$
\sum_{k=0}^\infty\|f_{k+r}\|_p\|g_k\|_q\leq\left[\sum_{k=0}^\infty\|f_{k+r}\|_p^p\right]^{1\over
p}\left[\sum_{k=0}^\infty\|g_k\|_q^q\right]^{1\over q}\leq
\|f\|_p\|g\|_q.
$$
In addition,
$$
\sum_{r=1}^\infty e^{-{\gamma(r-1)\over q}t}\leq
[1-e^{-{\gamma\over q}}],\quad t\in[1,+\infty).
$$
Hence, finally,
\be\label{48}\sum_{r=1}^\infty\sum_{k=0}^\infty|\langle
T_tf_{k+r},g_{k}\rangle|\leq Ce^{-\gamma t}\|f\|_p\|g\|_q, \quad
f\in L_p^0,g\in L_q^0, t\in[1,+\infty). \ee

Up to this moment, we have not used the assumption that $X^*$
admits an exponential $\phi^*$-coupling.  Now, we use this
assumption in order to estimate the last  summand in the right
hand side of (\ref{46}). We replace $X,\phi,p$ by $X^*,\phi^*,q$
and write,  under this assumption,  the following estimate
analogous to (\ref{45}):
$$
\left|\langle T_t^* g,f\rangle-\int_\XX g\,d\pi\int_\XX \bar f\,
d\pi\right|\leq C e^{-{\beta\over q}
t}\|g\|_{q,\phi^*}\|f\|_{q,\phi^*}^*,\quad  g\in L_{q,\phi^*},
f\in L_{q,\phi^*}^{*}.
$$
From the condition $\phi\asymp\phi^*$ we conclude that
$$
\|g_k\|_{q,\phi^*}=\left[\int_{I_k} |g_k|^q(\phi^*)^{-{q\over
p}}\, d\pi\right]^{1\over q}\leq C \|g_k\|_{q}\cdot e^{-{\gamma
k\over p}t},
$$
$$
\|f_k\|_{q,\phi^*}^*=\left[\int_{I_k} |f_k|^p\phi^*\,
d\pi\right]^{1\over p}\leq C \|f_k\|_{p}\cdot e^{{\gamma
(k+1)\over p}t}.
$$
In addition, by the choice of $\gamma$ we have ${\beta\over
q}-{\gamma\over p}>\gamma\left({2p-1\over q}-{1\over
p}\right)=\gamma\left(2{p\over q}-p^{-1}-q^{-1}\right)\geq
\gamma$ because $p\geq 2\geq q$. Then estimates analogous to
those made above yield
\be\label{49}\sum_{r=1}^\infty\sum_{k=0}^\infty|\langle
T_tf_{k},g_{k+r}\rangle|=\sum_{r=1}^\infty\sum_{k=0}^\infty|\langle
T_t^*g_{k+r}, f_k\rangle|\leq Ce^{-\gamma t}\|f\|_p\|g\|_q,  \ee
$f\in L_p^0,g\in L_q^0, t\in[1,+\infty).$ Now (\ref{44}) follows
from (\ref{46}) -- (\ref{49}). The theorem is proved.\end{proof}

\begin{cor}\label{c42} Under conditions of Theorem \ref{t43}, for
$p\in (1,+\infty)$, process $X$ satisfies $GB_p({\beta\over
2p-1}\wedge {\beta(p-1)\over p+1})$.
\end{cor}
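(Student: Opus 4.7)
The plan is to extend Theorem \ref{t43}, which covers the regime $p\in[2,+\infty)$, to all $p\in(1,+\infty)$ by a duality argument based on the symmetric nature of the hypotheses of Theorem \ref{t43}.

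First, I would observe that for $p\in[2,+\infty)$ the bound $\beta/(2p-1)$ is the smaller of the two terms in the minimum, so in this regime Theorem \ref{t43} applied to $X$ directly gives $GB_p(\gamma)$ for every $\gamma<\beta/(2p-1)$, and the claim follows. (A quick algebraic check: the inequality $\beta/(2p-1)\leq \beta(p-1)/(p+1)$ is equivalent to $p+1\leq (p-1)(2p-1)$, i.e. $2p(p-2)\geq 0$, which holds precisely when $p\geq 2$.)

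Next, for $p\in(1,2)$ I would appeal to duality. The hypotheses of Theorem \ref{t43} are symmetric in $(X,\phi)$ and $(X^*,\phi^*)$: the dual of $X^*$ is $X$ itself, $X^*$ admits an exponential $\phi^*$-coupling, $X$ admits an exponential $\phi$-coupling, and $\phi^*\asymp\phi$. Therefore Theorem \ref{t43} may be applied to $X^*$ with the roles of the two couplings interchanged, yielding $GB_q(\gamma')$ for $X^*$ whenever $q\in[2,+\infty)$ and $\gamma'<\beta/(2q-1)$. Since $\{T_t^*\}$ (acting on $L_q$) is the Banach-space adjoint of $\{T_t\}$ (acting on $L_p$), and since the subspace $L_p^0$ is the annihilator of $\langle\1\rangle$ in $L_q$, the operator norms of $T_t$ on $L_p^0$ and of $T_t^*$ on $L_q^0$ coincide. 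Taking $q=p/(p-1)$ turns the condition $\gamma'<\beta/(2q-1)$ into $\gamma'<\beta(p-1)/(p+1)$, which is exactly the second term of the minimum in the statement.

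Combining the two cases, for every $p\in(1,+\infty)$ and every $\gamma<\beta/(2p-1)\wedge \beta(p-1)/(p+1)$ we obtain an estimate of the form $\|T_tf\|_p\leq Ce^{-\gamma t}\|f\|_p$ for $f\in L_p^0$, i.e.\ property $GB_p(\beta/(2p-1)\wedge \beta(p-1)/(p+1))$. The only non-routine step is the duality transfer, but this is standard: the growth bound on $L_p^0$ is the operator-norm behavior of $\{T_t\}$, and passing to adjoints on the annihilator $L_q^0$ preserves that norm, so no loss is incurred in the constants or in the rate.
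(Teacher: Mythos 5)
Your proposal is correct and follows essentially the same route as the paper: the paper likewise observes that the hypotheses of Theorem \ref{t43} are symmetric in $(X,\phi)$ and $(X^*,\phi^*)$, uses the (standard) equivalence between $GB_p(\gamma)$ for $X$ and $GB_q(\gamma)$ for $X^*$ via the adjoint semigroup on $L_q^0$, and applies Theorem \ref{t43} to $X^*$ with exponent $q=p/(p-1)\geq 2$ to obtain $\gamma<\beta(p-1)/(p+1)$ when $p\in(1,2]$. You merely make explicit two points the paper leaves implicit --- the algebraic comparison showing which term of the minimum is active on each side of $p=2$, and the fact that the operator norm of $T_t$ on $L_p^0$ equals that of $T_t^*$ on $L_q^0$ --- but the argument is the same.
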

\begin{proof} Conditions of Theorem \ref{t43} are symmetric w.r.t. the choice between the process $X$ and its dual process $X^*$.
It is clear that the property $GB_p(\gamma)$ for $X$ is
equivalent to the property $GB_q(\gamma)$ for $X^*$. Therefore,
for $p\in(1,2]$, the process  $X$ satisfies
$GB_p\left(\gamma\right)$ for every $\gamma<{\beta \over
2q-1}={p-1\over p+1}\beta.$
\end{proof}

\subsection{Space $L_{2}$: the Poincar\'e inequality.}

Theorem \ref{t43} does not give any information about the
property $PI$. In this section, we investigate this property
separately. Recall that we assume  the process $X$ to admit an
exponential $\phi$-coupling.

Assume first that the process $X$  is  time-reversible;  that is,
the  dual process $X^*$ has the same distribution with $X$. The
following theorem looks quite standard (see \cite{RR97}, Theorem
2.1 or \cite{Chen00}, Theorem 1.2 for similar statements).
Nevertheless, even in the most studied case of a diffusion
process $X$, this statement gives rise for a
 new criteria for the Poincar\'e inequality  (see section 7 below).

\begin{thm}\label{t42} Consider the semigroup $\{T_t\}$ generated by $X$ in $L_2$.
Assume that $X$ is time reversible, or, equivalently, $T_t=T_t^*,
t\in \ax$.

Then $X$ satisfies $PI\left({\beta\over 2}\right)$.
\end{thm}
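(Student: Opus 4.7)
The plan is to establish $\|T_tf\|_2\le e^{-(\beta/2)t}\|f\|_2$ on $L_2^0$ (equivalently $PI(\beta/2)$) by combining the weighted $L_{2,\phi}$-bound of Theorem~\ref{t41} with its dual in Corollary~\ref{c41}, and then sharpening the resulting estimate by means of the spectral theorem for self-adjoint semigroups. The reversibility assumption enters at two points: it ensures that the dual process $X^*$ has the same transition function as $X$, so Corollary~\ref{c41} applies with $\phi^*=\phi$; and it makes $T_t$ self-adjoint on $L_2$, so that the growth bound on $L_2^0$ coincides with the spectral bound.

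First, I would put $p=q=2$ in Theorem~\ref{t41} and Corollary~\ref{c41} to obtain, for every $f\in L_{2,\phi}^0\cap L_{2,\phi}^{*,0}$,
\[
\|T_tf\|_{2,\phi}\le Ce^{-(\beta/2)t}\|f\|_{2,\phi},\qquad \|T_tf\|_{2,\phi}^*\le Ce^{-(\beta/2)t}\|f\|_{2,\phi}^*.
\]
For any bounded $f\in L_2^0$, both weighted norms of $f$ are finite, and the Cauchy--Schwarz inequality with the dual weights $\phi^{\pm 1/2}$ gives
\[
\|T_tf\|_2^2=\int_\XX (T_tf)^2\,d\pi\le \|T_tf\|_{2,\phi}\cdot\|T_tf\|_{2,\phi}^*\le C^2 e^{-\beta t}\|f\|_{2,\phi}\|f\|_{2,\phi}^*.
\]
Using $\|f\|_{2,\phi}\le\|f\|_2$ and $\|f\|_{2,\phi}^*\le \|f\|_\infty\|\phi\|_1^{1/2}$ (finite because $\phi\in L_1(\pi)$), this already yields $\|T_tf\|_2\le C(f)e^{-(\beta/2)t}$, albeit with a constant depending on $f$.

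The main obstacle is to remove this $f$-dependence and obtain a clean operator bound; this is where self-adjointness is indispensable. Since $-A$ is a nonnegative self-adjoint operator on $L_2$, every $f\in L_2^0$ carries a spectral measure $\mu_f$ with $\|T_tf\|_2^2=\int_0^\infty e^{-2\lambda t}\,d\mu_f(\lambda)$. The bound of the previous paragraph forces $\mu_f([0,\beta/2))=0$ for every bounded $f\in L_2^0$; indeed, any mass of $\mu_f$ on some interval $[0,\beta/2-\varepsilon]$ would provide a lower bound of order $e^{-(\beta-2\varepsilon)t}$, contradicting $\|T_tf\|_2^2=O(e^{-\beta t})$. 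By density of bounded functions in $L_2^0$ and continuity of the spectral projection $E_{[0,\beta/2)}$, the identity $E_{[0,\beta/2)}f=0$ extends to all $f\in L_2^0$, so $\sigma(-A|_{L_2^0})\subseteq[\beta/2,\infty)$. A second application of the spectral theorem then gives $\|T_tf\|_2^2\le e^{-\beta t}\|f\|_2^2$ for every $f\in L_2^0$, which is precisely $PI(\beta/2)$.
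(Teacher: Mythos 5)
Your proof is correct and follows essentially the same route as the paper: obtain $\|T_tf\|_2^2=O_f(e^{-\beta t})$ on a dense subset of $L_2^0$ from the weighted $L_2$ growth bounds of Theorem~\ref{t41}/Corollary~\ref{c41}, then use the spectral theorem for the self-adjoint generator together with a density argument to sharpen this to the uniform bound $\|T_tf\|_2^2\le e^{-\beta t}\|f\|_2^2$. The only cosmetic difference is that you combine the $L_{2,\phi}$ and $L_{2,\phi}^*$ bounds via Cauchy--Schwarz, whereas the paper simply notes $\|\cdot\|_2\le\|\cdot\|_{2,\phi}^*$ (as $\phi\ge 1$) and uses Corollary~\ref{c41} alone; both yield the same decay rate.
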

\begin{proof} Since  $T_t, t\in\ax$ is a contraction semigroup of self-adjoint
non-negative operators, it can be represented as $T_t=e^{-At},
t\in\ax$, where $A$ is the $L_2$-generator of the process $X$,
and $A$ is self-adjoint and non-negative. Let $P(d\lambda)$ be the
projector-valued measure from  the spectral decomposition for the
operator $A$:
$$
A=\int_0^\infty\lambda P(d\lambda).
$$
Then
$$
T_t=\int_0^\infty e^{-\lambda t}P(d\lambda),
$$
and, for every $f\in L_2$, \be\label{43}
\|T_tf\|^2_2=\int_0^\infty e^{-\lambda t} (P(d\lambda)f,f),\quad
t\in\ax,\ee
 where $(\cdot,\cdot)$ denotes scalar product in $L_2$.

We have $\|\cdot\|_2\leq \|\cdot\|_{2,\phi}^*$, and hence
Corollary \ref{c41} provides that, for every $f\in
L_{2,\phi}^{*,0},$ there exists a constant $C(f)\in \ax$ such that
$$
\|T_tf\|^2_{L_2}\leq C(f)e^{-\beta t},\quad t\in\ax.
$$
The latter inequality and (\ref{43}) implies that, for such $f$,
the measure $(P(d\lambda)f,f)$ is supported by $[\beta,+\infty)$.
One has  $(P(\Delta)f_n,f_n)\to (P(\Delta)f,f)$ for every Borel set
$\Delta$ and every sequence $f_n\to f$ in $L_2$. In addition, the
set $L_{2,\phi}^{*,0}$ is dense in
$L_2^0\eqdef\langle\1\rangle^\perp.$ Therefore,  the measure
$(P(d\lambda)f,f)$ is supported by $[\beta,+\infty)$ for every
$f\in L_2^0$, and (\ref{43}) yields (\ref{33}) with
$\gamma={\beta\over 2}$.
\end{proof}

Note that all the properties $SG_2(\gamma), GB_2(\gamma)$,
and $PI(\gamma)$ coincide for a time-reversible process $X$.
 This can be verified easily, and the argument
here is similar to the previous proof. The spectral decomposition
theorem is the key tool here, and the claim  for the generator
$A$ of $\{T_t\}$ to be self-adjoint (or, at least, normal) is
crucial. This claim is closely related with the structure of the
process. For instance, it is satisfied when $X$ is a diffusion
process. On the other hand, for $X$ being a solution to SDE with
a  jump noise, this claim is highly restrictive. This motivates
the following modification of Theorem \ref{t42}, that extends the
domain of its applications. The construction exposed below is an
appropriate modification of the one introduced in  \cite{Chen00}.

The rough idea is to replace the $L_2$-generator $A$ by the
operator $A^\diamond\eqdef{1\over 2}(A+A^*)$. Since the symmetric
part of the Dirichlet form generated by $A$ coincides with the
Dirichlet form generated by $A^\diamond$, the $PI(\gamma)$
property for the process $X$ would be  equivalent to the
$PI(\gamma)$ property for the process $X^\diamond$ corresponding
to $A^\diamond$. In the formal realisation of this idea, one
needs, at least,  to take care of the domains of various
generators.

\begin{thm}\label{t44} Assume there exists a time-reversable Markov process $X^\diamond$
that admits a $\phi$-coupling for some $\phi$. Assume also that
there exists a set $\Df\subset L_2$ such that

(i) $\Df\cap L_2^0$ is dense in $L_2^0$;

(ii) $\Df$ is invariant w.r.t. $L_2$-semigroup corresponding to
$X$;

(iii) $\Df$ belongs to the domains to the $L_2$-generators
$A,A^*,$ and $A^\diamond$  corresponding to $X,X^*,$ and $
X^\diamond$, respectively, and
$$
A^\diamond f={1\over 2}(Af+A^*f),\quad f\in \Df.
$$

Then $X$ satisfies $PI_2\left({\beta\over 2}\right)$, where
$\beta$ is the constant from the definition of the $\phi$-coupling
for $X^\diamond$.
\end{thm}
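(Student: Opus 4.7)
The plan is to transfer the Poincar\'e inequality from the symmetrized process $X^\diamond$ over to $X$, exploiting the identity $A^\diamond=\frac12(A+A^*)$ on $\Df$ to equate the symmetric parts of the two associated Dirichlet forms. First, since $X^\diamond$ is time-reversible and admits an exponential $\phi$-coupling, Theorem \ref{t42} yields $PI\!\left(\frac{\beta}{2}\right)$ for $X^\diamond$; via the equivalence of (\ref{33}) and (\ref{34}) with $c=\gamma^{-1}$ noted in Section 2.2, this rephrases as
$$
\|g\|_2^2\le \frac{2}{\beta}\,\Ef^\diamond(g,g),\qquad g\in L_2^0\cap Dom(\Ef^\diamond),
$$
with $\Ef^\diamond(g,g)=-\langle A^\diamond g,g\rangle$ on $Dom(A^\diamond)$.

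Next, for $f\in\Df$ I would use assumption (iii) together with the defining identity $\langle A^*f,f\rangle=\overline{\langle Af,f\rangle}$ to compute
$$
\langle A^\diamond f,f\rangle=\tfrac12\bigl(\langle Af,f\rangle+\langle A^*f,f\rangle\bigr)=\RE\langle Af,f\rangle,
$$
so that the Poincar\'e inequality above specializes on $\Df\cap L_2^0$ to
$$
\|f\|_2^2\le-\frac{2}{\beta}\RE\langle Af,f\rangle.
$$

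For the last step, assumption (ii) gives $T_tf\in\Df$ whenever $f\in\Df$, and invariance of $\pi$ under $X$ keeps the evolution inside $L_2^0$. For such $f$ I would then differentiate
$$
\frac{d}{dt}\|T_tf\|_2^2=2\RE\langle AT_tf,T_tf\rangle\le -\beta\|T_tf\|_2^2,
$$
apply Gronwall to obtain $\|T_tf\|_2\le e^{-(\beta/2)t}\|f\|_2$ on $\Df\cap L_2^0$, and finally use the density hypothesis (i) together with the fact that each $T_t$ is a contraction on $L_2$ that preserves $L_2^0$ to extend the bound to all $f\in L_2^0$. By definition this is $PI\!\left(\frac{\beta}{2}\right)$ for $X$.

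The step I expect to be the main obstacle is the bookkeeping of generator domains and the justification that the \emph{functional} Poincar\'e inequality for $X^\diamond$ genuinely applies to elements of $\Df$. Concretely, one needs the $L_2$-generator of $X^\diamond$ to be self-adjoint (so that Theorem \ref{t42} really produces a symmetric Dirichlet form $\Ef^\diamond$ and the identity $\Ef^\diamond(g,g)=-\langle A^\diamond g,g\rangle$ holds on $Dom(A^\diamond)\supset\Df$), and the invariance (ii) must be robust enough to keep $T_tf$ differentiable in $t$ inside $\Df$, which is what licences the passage from the functional inequality to the semigroup decay and, via density, back to an $L_2^0$ estimate.
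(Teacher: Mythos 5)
Your proposal is correct and follows essentially the same route as the paper's own proof: apply Theorem \ref{t42} to $X^\diamond$ to get $(A^\diamond f,f)\leq -\tfrac{\beta}{2}\|f\|_2^2$ on $\Df\cap L_2^0$, identify $(A^\diamond f,f)=\mathrm{Re}\,(Af,f)$ via (iii), differentiate $\|T_tf\|_2^2$ using (ii) to get the Gronwall-type decay, then extend by density (i). The concerns you flag about domains are handled automatically by the hypothesis $\Df\subset Dom(A)$ (which makes $t\mapsto T_tf$ differentiable with derivative $AT_tf$) and by time-reversibility of $X^\diamond$ (which gives self-adjointness of $A^\diamond$ for free).
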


\begin{proof} Denote by $\{T_t^\diamond\}$ the $L_2$-semigroup
generated by $X^\diamond$. It follows from the previous theorem
that, for every $f\in \Df\cap L_2^0$,
$$
(A^\diamond f,f)\leq -{\beta\over 2} \|f\|^2_2.
$$
Since $(A^\diamond f,f)={1\over
2}[(Af,f)+(A^*f,f)]=\mathrm{Re}\,(Af,f)$, this yields
$$
\mathrm{Re}\,(Af,f)\leq -{\beta\over 2} \|f\|^2_2, \quad f\in
\Df\cap L_2^0.
$$
Then, for $f\in \Df\cap L_2^0,$ we have
$$
{d\over dt}\|T_tf\|^2=2\mathrm{Re}\,(AT_tf, T_tf)\leq -\beta
\|T_tf\|^2_2, \quad t\in\ax,
$$
here we have used that, by the condition (ii), $T_tf\in \Df\cap
L_2^0$. Hence, \be\label{418} \|T_tf\|^2_2\leq e^{-\beta
t}\|f\|_2^2\ee for every $f\in \Df\cap L_2^0$. Since $\Df\cap
L_2^0$ is dense in $L_2^0$, (\ref{418}) holds true for every $f\in
L_2^0$.
\end{proof}

\section{One example}

In this section, we give an example of a Markov process which
demonstrates relations between the objects considered in our main
exposition.  We will see that process that admits an exponential
$\phi$-coupling may fail to possess a spectral gap property. This
would make more clear the statements of section 3.2: in general,
in order to control growth bounds and spectral properties of
$L_p$ semigroups, one should control ergodic properties  both for
the process $X$ and for the dual process $X^*$. Also, we will see
that the exponential $L_2$ growth bound (\ref{330}) {\it is not
equivalent} to the Poincar\'e inequality (\ref{33}).
Consequently, for time-irreversible  processes these two
inequalities should be studied separately.

Let $\XX=[0,+\infty)$ and the extended generator of the process $X$ be defined on the functions $f\in C^1$ by the formula
$$
\Af f(x)=-a(x)f'(x)+\theta(x)\sum_{k=1}^\infty (1-p)p^{k-1}[f(x_k)-f(x)],\quad x\in \XX,
$$
where  $\{x_k, k\geq 1\}\subset [1,+\infty)$, $p\in(0,1)$, and  $a,\theta\in C^1$ are functions taking values in $[0,1]$. We assume that
$$a(0)=0,\,\, a(x)>0, \,x>0, \,\, a(x)=1,\, x\geq 1\, \hbox{  and }\,\theta(x)=0,\, x\geq 1, \,\, \theta(x)=1,\,  x\leq {1\over 2}.
 $$It is also assumed  that $x_k< x_{k+1}, k\geq 1$; that is, the points $x_k,k\geq 1$ are naturally ordered.

The dynamics of the process $X$ contains two parts. The first
(deterministic) component is given by the ordinary differential
equation (ODE) $dx=-a(x)dt$. The second (jump) part corresponds
to possibility for the process to jump at one of the positions
$x_k, k\geq 1$. The intensity for such a jump depends on $k$ and
the current position $x$, and is equal $(1-p)p^{k-1}\theta(x)$.

For this model, ergodic and spectral properties can be expressed  completely
in the terms of $p$ and $\{x_k, k\geq 1\}$; let us formulate corresponding statements.

\begin{enumerate}
\item If there exists $\alpha>0$ such that $\sum_{k\geq 1}p^{k}e^{\alpha x_k}<+\infty,$
then $X$ admits an exponential $\phi$-coupling with $\phi(x)=e^{\alpha x}$.

\item Condition $\sup_k(x_{k+1}-x_k)<+\infty$ is necessary for $X$ to satisfy $SG_p(\gamma)$
with some $\gamma>0$, and sufficient for $X$ to satisfy $GB_p(\gamma')$ with some $\gamma'>0$.
\item For any sequence $\{x_k, k\geq 1\}$, the process $X$ does not satisfy the Poincar\'e inequality.
\end{enumerate}

\emph{Proof of statement (1).} It is clear that the Lyapunov-type condition (\ref{lyap}) holds true with $\phi(x)=e^{\alpha x}$. Hence, it is enough to prove that the local Doeblin condition holds and then use Proposition \ref{p21}. Denote by $\psi_{t}(x), t\in \ax, x\in \XX$  the flow generated by ODE $dx=-a(x)dt.$ For a given compact $K\subset \XX$,  there exists $T_K>0$ such that $\psi_t(x)\leq {1\over 2}, x\in K, t\geq T_K$. This together with the Chapman-Kolmogorov equation  yields that we need to prove Doeblin condition for the compact $K=[0, 2^{-1}]$, only.

On the segment $[0, 2^{-1}]$, the  intensity of a jump to a point $x_k ,k\geq 1$
is constant and equals $(1-p)p^{k-1}$. If the starting point $x$ belongs to this segment,
 then the process spends inside this segment  a random time that has exponential distribution with intensity 1.
  As soon as  the process jumps to $x_2$, it moves  with the constant speed $a=-1$ and does not have any jumps
    up to any time moment   $t\leq x_2-1$. This means that, for  $t\leq x_2-1$, $x\in [0, 2^{-1}]$,
$$
P_t(x,dy)\geq (1-p)p P(\eta\leq t,\eta\in dy+t-x_2)=(1-p)pe^{-y-t+x_2}\1_{y\leq t}dy,
$$
where $\eta$ denotes an exponential random variable with intensity 1. Therefore, for $T=x_2-1$
$$
\sup_{x,x'\in [0,2^{-1}]}\|P_T(x_1,\cdot)-P_T(x_2,\cdot)\|_{var}\leq 2-(1-p)p\int_0^Te^{-y+1}dy<2,
$$
which gives the Doeblin condition on $[0, 2^{-1}]$.

\emph{Proof of statement (2): sufficiency.} Let us determine the dual process $X^*$.  Note that under assumption  $\sup_k(x_{k+1}-x_k)<+\infty$ one has  $\sum_{k\geq 1}p^{k}e^{\alpha x_k}<+\infty$ for sufficiently small $\alpha>0$. Hence,  by  statement (1), the invariant measure $\pi$ is unique.

Denote $I_0=[0,x_1),I_k=(x_k, x_{k+1}), k\geq 1$. The invariant measure $\pi$ is determined by the relations
$$
\int_\XX \Af fd\pi=0,\quad f\in Dom(\Af)\eqdef\{f:\hbox{ $f$ and
$\Af f$ are bounded}\}.
$$
Taking in this relation $f\in C^1$ with supp$\, f\subset I_k$, we get that $\pi|_{I_k}$ has a density $\rho_k$. In addition, this density is
constant for  $k\geq 1$ and has the form
$$
C[a(x)]^{-1}\exp\left[\int_1^x{\theta(y)\over a(y)}dy\right]
$$
for $k=0$. On the other hand, taking $f\in C^1$ with supp$\, f\subset I_{k-1}\cup I_k$, we obtain that $\pi(\{x_k\})=0, k\geq 1$, and
$$
\rho_k(x_k)=p\rho_{k-1}(x_k),\quad k\geq 1.
$$
These relations and normalizing condition $\pi(\XX)=1$ determine the invariant measure $\pi$ uniquely.

As soon as $\pi$ is determined, one can find the transition probability for the dual process using the
relations
$$
\int_{A}P_t^*(x,B)\pi(dx)=\int_{B}P_t(x,A)\pi(dx), \quad A,B\in \Bf(\XX).
$$
Without a detailed exposition of this standard step, we just give the description of the dual process. Its dynamics also  contains two components. The deterministic component is given by the ODE $dx=a(x)dt$. When the process comes to one of the points $x_k, k\geq 1$, it can either continue its move or make a jump into the segment $[0,1]$. The probability of a jump is equal $(1-p)$, and the distribution of the position of the process after a jump has the density
$$
{\theta(x)\over a(x)}\exp\left[\int_1^x{\theta(y)\over a(y)}dy\right].
$$

Consider the function $\phi^*(x)=E_xe^{\alpha \tau^*}$, where $\tau^*$ is the hitting time of the segment $[0,1]$ by the dual process $X^*$.
For $x\leq 1$, $\phi^*(x)=1$. If the starting point is $X_0^*=x> 1$, the process $X_t^*$ moves with the constant speed $1$ and, at every point $x_k,k\geq 1$, gets a chance  to jump into the segment $[0,1]$ with probability $(1-p)$. Hence
for $x>1$ one has
$$
\phi^*(x)=\sum_{k=K(x)}^\infty(1-p)p^{k-K(x)}e^{\alpha(x_k-x)},
$$
where $K(x)=\inf\{k: x_k>x\}$. Therefore, for  $\alpha>0$ small enough, the function $\phi^*$ is bounded:
$$
\ba \phi^*(x)&\leq \sum_{k=K(x)}^\infty
(1-p)p^{k-K(x)}e^{\alpha(k-K(x)+1)\sup_k(x_{k+1}-x_k)}\\
 &\leq \sum_{k=1}^\infty (1-p)p^{k-1}e^{\alpha k\sup_k(x_{k+1}-x_k)}<+\infty.
 \ea
$$
Like it was done in the proof of statement (1), one can verify that $X^*$ satisfies the local Doeblin condition.   Then by Proposition \ref{p22}
we get that $X^*$ admits an exponential $\phi^*$-coupling. By Corollary \ref{c43}, we get the required statement.

\begin{rem} It can be  verified that the initial process $X$ does not admit an exponential $\phi$-coupling for any bounded $\phi$. One can say that,  in the example in the discussion, the ergodic properties of the dual process $X^*$ are better
 than those of the process $X$ itself. On the other hand, $L_p$ estimates for the process $X$ are equivalent to  $L_q$ estimates for the process $X^*$ ($p^{-1}+q^{-1}=1$). Hence, the  one  interested in $L_p$ rates can choose to start the investigation either from  $X$ or from  $X^*$ depending on their ergodic properties. This is exactly what we have done in our proof. Another possibility is provided by Theorem \ref{t43},
 where ergodic properties  of $X$ and $X^*$ are exploited jointly. We will use this possibility in section 5 below.

\end{rem}

\emph{Proof of statement (2): necessity.} Let
  $X$ satisfy $SG_p(\gamma)$ with some $p\in(1,+\infty), \gamma>0$. Then $0$ is a resolvent point for the restriction of $\{T_t\}$ to $L_p^0$, and therefore there exists $C_1\in\ax$ such that
\be\label{b1}
\mathop{\lim\sup}_{\lambda\to 0+}\left|\int_0^\infty\int_\XX e^{-\lambda t}T_tf(x)g(x)\pi(dx)\, dt\right|\leq C_1\|f\|_p\|g\|_q,\quad  f\in L_p^0,\,g\in L_q^0.
\ee
For every given $Q>0$, (\ref{b1}) also holds true with $f$ replaced by $T_Q f$. Then easy transformation gives
\be\label{b2}
\mathop{\lim\sup}_{\lambda\to 0+}\left|\int_0^Q\int_\XX e^{-\lambda t}T_tf(x)g(x)\pi(dx)\, dt\right|\leq 2C_1\|f\|_p\|g\|_q,\quad  Q\in\ax,\,f\in L_p^0,\,g\in L_q^0.
\ee
Denote $d_k=x_{k+1}-x_k$, $y_k=x_k+{1\over 4}d_k, z_k=x_k+{1\over 2}d_k$ and put
$$f_k=g_k=\1_{(x_k,y_k)}-\1_{(y_k,z_k)}, \quad k\geq 1.
$$
For $t\leq {1\over 2}d_k$, we have $T_tf_k(x)=f_k(x-t)=\1_{(x_k+t,y_k+t)}(x)-\1_{(y_k+t,z_k+t)}(x)$. Recall that the invariant measure  $\pi$ has a positive constant density $\rho_k$ on every segment $I_k=(x_k,x_{k+1})$. Then straightforward calculations show that
$$
\int_\XX T_tf(x)g(x)\pi(dx)\geq 4^{-1}d_k\rho_k,\quad t\leq  4^{-1}d_k.
$$
On the other hand, $\|f_k\|^p=\|g_k\|^q={1\over 2}d_k\rho_k$. Therefore, inequality (\ref{b2}) with $Q= {1\over 2}d_k$ gives the estimate
$$
8^{-1}d_k^2\rho_k\leq 2C_1d_k\rho_k,\quad k\geq 1,
$$
which implies  that the sequence $\{d_k=x_{k+1}-x_k\}$ is bounded.

\emph{Proof of statement (3).} For a fixed $k\geq 1$, consider the function $f_k$ introduced in the previous proof. We have
$$
\|T_tf_k\|^2_2=\|f_k(\cdot -t)\|^2_2=\|f_k\|^2_2,\quad t\leq 2^{-1}d_k.
$$
But under the Poincar\'e inequality (\ref{33}) one should have
$$
\|T_tf\|^2< \|f\|^2_2,\quad t>0, \quad f\in L_2^0, f\not=0.
$$
Therefore, for the process $X$  the Poincar\'e inequality fails.

\section{Solutions to SDE's with jump noise}

In this section we apply the general results of Section 3 to  solution to  SDE of the type
\be\label{01} d X(t)=a(X(t))dt+\int_{\|u\|\leq 1} c(X(t-),u)\tilde
\nu(dt,du)+\int_{\|u\|>1} c(X(t-),u) \nu(dt,du).
 \ee
Here $\nu$ is a Poisson point measure on $\ax\times \Re^d$ with the intensity measure $dt\mu(du)$,
 $\mu$ is the corresponding  L\'evy measure,  $\tilde \nu(dt,du)=\nu(dt,du)-dt\mu(dt)$
  is the compensated point measure, and coefficients $a,c$ satisfy standard conditions
  sufficient for existence and uniqueness of a strong condition (e.g. local Lipschitz and linear growth conditions).

We start the discussion mentioning that, for the process $X$
defined by (\ref{01}), efficient tools to provide existence of an
exponential $\phi$-coupling are available. In \cite{Kul09}, it was
demonstrated that, for such processes, the local Doeblin condition
can be verified efficiently. This condition follows from
appropriate support condition (\cite{Kul09}, condition
\textbf{S}) and a (partial) continuity in variation of the law of
the solution to SDE w.r.t. initial value. The latter property
means that there exists a subset $\Omega'$ of the initial
probability space $\Omega$ such that $P(\Omega')>0$ and the law
of the solution conditioned by $\Omega'$ is continuous in total
variation norm. This property holds under a  non-degeneracy
condition formulated in terms of the random point measure
(\cite{Kul09}, condition \textbf{N}), and the main tool in its
proof is a certain version of a stochastic calculus of variations
for SDE's with jumps. We do not give a detailed overview here,
referring interested reader to \cite{Kul09}.

At the same time, the Lyapunov-type condition for solutions to
SDE's of the type (\ref{01}) is quite transparent
 (see \cite{Mas07}, \cite{Kul09} and discussion therein). Therefore, for solutions to SDE's with
  jump noise, one can prove existence of an exponential $\phi$-coupling using Proposition \ref{p21}.

  However, solutions to SDE's with jump noise, typically, are not time-reversible. The example given in section 4 indicates that,   to investigate $L_p$ convergence rates and spectral properties for a time-irreversible Markov process,   it may be insufficient to have an exponential $\phi$-coupling for the process itself. In general, an analysis of the ergodic properties of the
 the dual processes is also required. In this section, we provide  such an analysis and give sufficient condition for the process $X$ defined by (\ref{01}) to possess a spectral gap property.

 In order to keep exposition reasonably short, we restrict our considerations by a particular, but important class of one-dimensional \emph{L\'evy driven Ornstein-Uhlenbeck processes}; that is, solutions to (\ref{01}) with lineal drift  and additive jump noise. Henceforth,
 in the rest of this section, $X$ is a real-valued process solution to SDE
\be\label{91} dX_t=-a X_t\,dt+dZ_t, \ee where $a>0$ and
$Z_t=\int_0^t\int_{|u|\geq
1}u\nu(ds,du)+\int_0^t\int_{|u|<1}u\tilde \nu(ds,du)$ is a {\it
L\'evy process}.

Ergodic properties for  L\'evy driven Ornstein-Uhlenbeck
processes are well studied. It is
 known that a L\'evy driven Ornstein-Uhlenbeck process $X$ is ergodic if and only if
 $\int_{|u|\geq 1}\ln |u|\mu(du)<+\infty$ (see \cite{SY84}).
   Sufficient conditions for exponential ergodicity for $X$ are also available (see \cite{Mas07} and references therein).
 Our intent is to establish a spectral gap property for the (unique) stationary version of $X$. We give one sufficient condition of that type. Remark that this condition is not strongest possible and allows various generalizations; see Remarks \ref{r91} and \ref{r92} after the proof of Theorem \ref{p91}.

\begin{thm}\label{p91} Assume that

1) $\mu(\Re^-)=\mu(\Re^+)=\infty$;

2) $\mu$ is supported by a bounded set;

3) $\int_{|u|\leq 1}|u|\mu(du)<+\infty$.

\noindent Then, for every $p>1$, process $X$ satisfies $GB_p(\gamma)$  for sufficiently small $\gamma>0$.
\end{thm}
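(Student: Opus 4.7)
The plan is to apply Corollary~\ref{c42}, which reduces the task to producing an exponential $\phi$-coupling for $X$ and an exponential $\phi^*$-coupling for the dual process $X^*$ with $\phi\asymp\phi^*$. Throughout I would take the candidate $\phi(x)=e^{\alpha|x|}$ for some $\alpha>0$ sufficiently small. The integrability $\phi\in L_1(\pi)$ is automatic: since $\mu$ has bounded support (condition~2), the invariant law $\pi$, which is the distribution of $\int_0^\infty e^{-as}\,dZ_s$, has finite exponential moments of every order.

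For the coupling of $X$ itself I apply Proposition~\ref{p21}. Condition~3 allows one to rewrite $Z_t=L_t-bt$, where $L_t=\sum_{s\leq t}\Delta Z_s$ is a finite-variation pure-jump process and $b=\int_{|u|<1}u\,\mu(du)$, so that $dX_t=(-aX_t-b)\,dt+dL_t$. A direct computation of $\Af e^{\alpha|x|}$ yields a dissipative drift contribution of order $-\alpha a|x|e^{\alpha|x|}$ plus a jump integral bounded by $\phi(x)\int(e^{\alpha|u|}-1)\,\mu(du)$, which is finite by condition~2. For $\alpha$ small the drift dominates outside a compact set, establishing the Lyapunov bound~(\ref{lyap}). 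The local Doeblin condition is covered by the framework of \cite{Kul09}: condition~1, by charging both half-lines infinitely, supplies the nondegeneracy needed for partial continuity in variation, while the linear drift together with bounded jumps yields the requisite support condition.

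The main obstacle is the coupling for the dual process $X^*$. First I would identify $X^*$: computing the $L_2(\pi)$-adjoint of $\Af$ by integration by parts against the density $p$ of $\pi$ shows that $X^*$ is again a one-dimensional Markov process whose generator consists of a reversed-sign linear drift (of order $+ax$, corrected by $(\log p)'$) and a jump operator with kernel $\mu$ reweighted by the ratio $p(x-u)/p(x)$. To produce a $\phi^*$-coupling for $X^*$ with $\phi^*\asymp e^{\alpha|x|}$, I would verify two things: the reweighted kernel still charges both half-lines, so that the local Doeblin condition for $X^*$ follows as in the previous step; and, despite the outward drift $+ax$, the jumps reinject mass deeply enough into bounded sets to yield a Lyapunov-type estimate for $\Af^*$. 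The cleanest realization of the second point is to take $\phi^*(x)=E_x e^{\alpha'\tau_K^*}$ for a suitable compact $K$ and apply Proposition~\ref{p22} to $X^*$, controlling the hitting-time moments via the exponential tail decay of $p$ at infinity (a consequence of condition~2). Handling the density ratio $p(x-u)/p(x)$ uniformly in $x$---especially near regions where $p$ may be small---is the principal technical difficulty, and is essentially the place where all three conditions 1)--3) are used simultaneously.

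With both couplings in hand and $\phi\asymp\phi^*$ (both are $\asymp e^{\alpha|x|}$), Corollary~\ref{c42} yields $GB_p(\gamma)$ for every $p>1$ and every $\gamma<\bigl(\beta/(2p-1)\bigr)\wedge\bigl(\beta(p-1)/(p+1)\bigr)$, where $\beta$ is the common exponential coupling rate. This is the claimed conclusion.
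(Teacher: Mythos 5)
Your overall strategy matches the paper's: exhibit an exponential $\phi$-coupling for $X$ and an exponential $\phi^*$-coupling for the dual $X^*$ with $\phi\asymp\phi^*$, then invoke Theorem~\ref{t43} or Corollary~\ref{c42}. The treatment of $X$ itself is sound: $\phi(x)=e^{\alpha|x|}$ does satisfy~(\ref{lyap}) since, for $|x|$ large, the drift term $-a\alpha|x|e^{\alpha|x|}$ dominates the finite jump contribution $e^{\alpha|x|}\int(e^{\alpha u}-1)\,\mu(du)$, and the local Doeblin condition follows from \cite{Kul09}. (The paper chooses the milder Lyapunov function $\phi(x)=|x|$; your choice also works for $X$.)

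The genuine gap is in the dual process step, which is exactly the step you flag as ``the principal technical difficulty'' and then do not resolve. Two specific problems. First, the crucial ingredient the paper uses is the precise Laplace-type asymptotics of the invariant density ratio, Proposition~\ref{p92} (from \cite{KK}): $\rho(x-u)/\rho(x)\sim e^{u\xi(x)}$ uniformly for bounded $u$, where $\xi(x)$ solves $\Mf_1(\xi)=x$ and tends to infinity with $|x|$. Combined with the fact that $M_1(\xi(x))/x\to\infty$, this shows the reweighted jump term in $\Af^*$ overwhelms the outward drift $+ax$ and yields a Lyapunov estimate for $X^*$ with the \emph{same} $\phi$ as for $X$. ``Exponential tail decay of $p$'' is far weaker than what is needed: the Lyapunov bound for $X^*$ hinges on the exponential factor $e^{u\xi(x)}$ with $\xi(x)\to\infty$ making the inward jumps arbitrarily dominant, not merely on a fixed exponential rate. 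Second, your alternative route via $\phi^*(x)=E_x e^{\alpha'\tau_K^*}$ and Proposition~\ref{p22} requires $\phi^*\asymp e^{\alpha|x|}$, which is asserted but not argued. Given that the dual process has an increasingly strong inward jump mechanism (rates reweighted by $e^{u\xi(x)}$), it is not at all clear that its hitting-time exponential moments grow like $e^{\alpha|x|}$; they could plausibly be of a quite different order, in which case the equivalence $\phi\asymp\phi^*$ fails and Corollary~\ref{c42} does not apply. Finally, a smaller point: the Doeblin condition for $X^*$ cannot be obtained ``as in the previous step'', because $X^*$ has a state-dependent jump rate and therefore falls outside the class treated in \cite{Kul09}; the paper explicitly resorts to the extension in \cite{Kul08} for this reason.

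In short, you have the right architecture and correctly locate where the work lies, but the Lyapunov estimate for $\Af^*$ --- which requires the KK density-ratio asymptotics and the superlinear growth $M_1(\xi(x))/x\to\infty$ --- is the heart of the proof, and it is missing.
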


\begin{proof}
We will prove that $X$ and  $X^*$  admit an exponential $\phi$-coupling with the same $\phi(x)$; then  Theorem \ref{t43} would yield the required statement.

It is known (see \cite{KK}, Proposition 2.1)  that under condition
1) the invariant distribution $\pi$ admits a $C^\infty$ density;
denote this density by  $\rho$. In the sequel, we need the
following asymptotic result.  Denote
$$
M_1(\xi)=\int_{\Re}u(e^{\xi u}-1)\mu(du),\quad M_2(\xi)=\int_{\Re}u^2 e^{\xi u}\mu(du), \quad \xi\in \Re,
$$
$$
\Mf_k(\xi)=\int_0^\infty M_k(e^{-a s}\xi)\, ds\quad \xi\in \Re, \,k=1,2.
$$
Clearly,
$$
{d\over d\xi}\Mf_1(\xi)=\Mf_2(\xi)>0
$$
and condition 1) yields $\Mf_1(\xi)\to \pm\infty, \xi\to
\pm\infty$. Then for every $x\in \Re$ there exists unique solution
$\xi=\xi(x)$ to the equation
$$
\Mf_1(\xi)=x.
$$

\begin{prop}\label{p92} (\cite{KK}, Theorem 7.1) Under conditions 1) and 3) of Theorem \ref{p91},
$$
\left[{\rho(x+y)\over \rho(y)}\right] e^{y\xi(x)}\to 1,\quad x\to \infty
$$
uniformly by $y\in Y$ for every bounded set $Y\subset \Re$.
\end{prop}

 We  proceed with the proof of the theorem.  Our first step is to specify the dual process $X^*$. Since this step is quite standard, we  sketch the argument and omit  technical details.

Every $f\in C^1$ with at most polynomial growth of its derivative belongs to the domain of the extended generator $\Af$ and
$$
\Af f(x)=-a xf'(x)+\int_\Re[f(x+u)-f(x)]\mu(du).
$$
 We  have
$$
\int_\Re\Af fd\pi=0
$$
for every such $f$. This yields that  the invariant density
$\rho$ satisfies the relation \be\label{92} a
x\rho'(x)+a\rho(x)+\int_\Re[\rho(x-u)-\rho(x)]\mu(du)=0 \ee The
formally adjoint  operator to $\Af$ is given by the formula
$$
\Af^*f(x)=\rho^{-1}(x)\Big({d\over dx}[ax\rho(x)f(x)]+\int_\Re[f(x-u)\rho(x-u)-f(x)\rho(x)]\mu(du)\Big).
$$
This relation combined with (\ref{92}) provides that the extended generator of the dual process $X^*$ is given by
$$
\Af^*f(x)=a xf'(x)+\int_\Re\left[(f(x-u)-f(x)){\rho(x-u)\over \rho(x)}\right]\mu(du).
$$
Note that the domain of $\Af^*$ is yet to be determined. It can be verified by additional
investigation that $\rho(x)>0, x\in\Re$, but this would lead to unnecessary complication of the proof.
For our needs, it is sufficient  to refer to Proposition \ref{p92} which implies that $\rho(x)>0$ outside
some segment $[-I,I]$. Furthermore  it is easy to verify that, for every $\eps>0$,
$$
\Mf_1(\xi)e^{-(\sigma_*+\eps)|\xi|}\to 0,\quad \xi \to \infty
$$
with  $\sigma_*\eqdef\inf\{\sigma:\mu(|u|>\sigma)=0\}$ (see
\cite{KK}, Example 5.2), and consequently
$$
\sup_{|u|\leq \sigma_*}{\rho(x-u)\over \rho(x)}\leq C(1+|x|^p),
$$
where $C,p>0$ are some constants. Hence every $f\in C^1$  with at most polynomial growth of its derivative, being constant on $[-I-\sigma_*,I+\sigma_*]$, belongs to the domain of $\Af^*$.

Consider $\phi\in C^1$ such that $\phi\geq 1, \phi$ is constant on $[-I-\sigma_*,I+\sigma_*]$, and $\phi(x)=|x|$ for $|x|$ large enough.
Then for  $|x|$ large enough
$$
\Af\phi(x)=-a x\, \mathrm{sign}\,x+\int_{-\sigma_*}^{\sigma_*}u\mu(du)\leq -{a\over 2}|x|=-{a\over 2}\phi(x);
$$
that is, $\phi$ satisfies the Lyapunov-type condition (\ref{lyap}) w.r.t. the process $X$. On the other hand,
from Proposition \ref{p92} we get that, for every $\delta>0$, there exist $C>0$ such that
\be\label{93}
\begin{aligned}
\Af^*\phi(x)&\leq ax\mathop{\mathrm{sign}} x-(1-\delta) \int_{\Re}(|x-u|-|x|)e^{-u\xi(x)}\mu(du)\\
&=\mathop{\mathrm{sign}} x\left[ax-(1-\delta)M_1(\xi(x))\right]=|x|[a-(1-\delta)x^{-1}M_1(\xi(x))].
\end{aligned}
\ee It can be verified easily (e.g. see the proof of Theorem 7.1)
that for every $\sigma\in(0,1)$
$$
M_1(\sigma\xi)[M_1(\xi)]^{-1}\to 0,\quad \xi \to \infty
$$
and consequently
$$
\Mf_1(\xi)[M_1(\xi)]^{-1}\to 0,\quad \xi \to \infty.
$$
Therefore $x^{-1} M_1(\xi(x))\to+\infty, x\to \infty$ because $\Mf_1(\xi(x))=x$. This and (\ref{93}) yield that $\phi$ satisfies the Lyapunov-type condition (\ref{lyap}) w.r.t. the process $X$.

For the process $X$, the local Doeblin condition holds; we have
already mentioned that one can derive this condition using
results of \cite{Kul09}. In particular, in the case under
consideration one can deduce the  local Doeblin condition from
Theorem 1.3 \cite{Kul09} using literally the same arguments with
those given in the proof of Proposition 0.1 \cite{Kul09}.

On the other hand, $X^*$ is not a solution to SDE of the type
(\ref{01}). It is a \emph{process with non-constant rate of
jumps}, and  such processes were not considered in \cite{Kul09}.
Henceforth, one  can not deduce the local Doeblin condition for
$X^*$ from the results of \cite{Kul09}. However,  the stochastic
calculus of variations that provides (partial) continuity in
variation is available for the processes with non-constant rate
of jumps as well, see \cite{Kul08}, and the main  results from
\cite{Kul09} can be extended for such  processes   without
principal changes. In particular, one can prove the local Doeblin
condition for $X^*$ following the proof of  Proposition 0.1
\cite{Kul09} and using within this proof Theorem 4.2 \cite{Kul08}
instead of Theorem 1.3 \cite{Kul09}.

Now we apply Proposition \ref{p21} twice, and get that both $X$
 and $X^*$ admit an exponential $\phi$-coupling. Applying  Theorem \ref{t43} completes the proof.
\end{proof}

\begin{rem}\label{r91}\emph{(On the class of equations).}  In Theorem
 \ref{p91}, we restrict our consideration by the linear SDE's with jump noise.
   The only point in the proof where this structural assumption was used substantially
   -- the Lyapunov-type condition for $\phi$ w.r.t. the dual process -- is based on the estimates
   for the ratio ${\rho(x+y)\over \rho(x)}.$ In \cite{KK}, these estimates are obtained via harmonic analysis
   arguments, and here is the point where the linear structure of SDE's under investigation is substantial.
    However, this is not the only possible technique. Supposedly, using `stochastic calculus
     of variations' tools similar to those given in \cite{K06}, Section 6, one can extend such estimates to
      non-linear SDE's with jump noise as well, and then give an extension of Theorem \ref{p91}
      to this more general class of equations. We postpone such a generalization to  a further publications.

\end{rem}

\begin{rem}\label{r92}\emph{(On conditions).} Conditions 1) and 2) of Theorem \ref{p91} come
from \cite{KK} Theorem 7.1. The first condition is rather mild,
and the second one allows a wide field of modifications. For
instance, it can be replaced by an appropriate condition on the
`exponential tails' of the L\'evy measure $\mu$ (see \cite{KK},
Proposition 6.1 and discussion before Theorem 7.1).
 On the other hand, condition 3), though not used explicitly, is crucial in our framework. Without this
 condition one can not apply Theorem 4.2 \cite{Kul08} which is required  to get the local Doeblin condition
 for the dual process.
\end{rem}

\section{Exponential moments for hitting times under Poincar\'e inequality}

The results of  section 3 allows one to establish spectral gap
property for a given process $X$ in the following way: first,
prove the local Doeblin condition to hold true; second, find {\it
some} $\phi$ such that the recurrence conditions 1) -- 3) from
Theorem \ref{p21} holds true; then, if $X$ is time-irreversible,
repeat this procedure for the dual process $X^*$; and, finally,
deduce the required property using Theorems \ref{t43} --
\ref{t44}.

We have already mentioned that the local
Doeblin condition is straightforward, and can be verified
efficiently for important  classes of  processes like  diffusions or solutions to SDE's with jump noise. The
second part in the framework outlined above -- the recurrence conditions -- looks less
transparent since there is a lot of freedom in the choice of
$\phi$.  Proposition \ref{p22}, in fact,  reduces such a choice  to the
class of functions of the form $\phi(x)=E_xe^{\alpha\tau_K}$. In
this section we demonstrate that this  reduction well
corresponds to the matter of the problem.

Considerations of this section are mainly motivated by the paper
\cite{Mat97}, where the relation  between the family of certain
weak versions of the Poincar\'e inequalitiy, on one hand, and the
moments of the hitting times
$$
\tau_K=\inf\{t: X_t\in K\},
$$
on other hand, is investigated.

In what follows, we suppose an invariant measure $\pi$ for the
process $X$ to be fixed, and consider the Dirichlet form $\Ef$ on
$L_2\eqdef L_2(\XX,\pi)$ corresponding to the process $X$ (see
section 2.2). The form $\Ef$ is supposed to be regular; that is,
the set $Dom(\Ef)\cap C_0(\XX)$ is claimed to be dense both in
 $Dom(\Ef)$ w.r.t. the norm $\|\cdot\|_{\Ef,1}$ and in $C_0(\XX)$
 w.r.t. uniform convergence on a compacts ($C_0(\XX)$ is the set of continuous functions with compact
 supports).  We also assume that the {\it sector condition} holds true:
 $$
\exists D\in\ax:\quad  |\Ef( f,g)|\leq D
\|f\|_{\Ef,1}\|g\|_{\Ef,1},\quad f,g\in Dom(\Ef).
 $$

 It is well-known (see the discussion in
Introduction to \cite{Mat97} and references therein) that the
hitting times  $\tau_K$ have natural application in the
probabilistic representation for the family of {\it
$\alpha$-potentials} for the Dirichlet form $\Ef$. The
$\alpha$-potential, for given $\alpha>0$ and closed $K\subset
\XX$, is defined as the function $h_\alpha^K\in Dom(\Ef)$ such
that $h_\alpha^K=1$ quasi-everywhere on $K$, and
$\Ef(h_\alpha^K,u)=-\alpha(h_\alpha^K,u)$ for every
quasi-continuous function $u\in Dom(\Ef)$ such that $u=0$
quasi-everywhere on $K$. On the other hand,
$$
h_\alpha^K(x)=E_xe^{-\alpha \tau_K}, \quad x\in\XX.
$$

It is a straightforward corollary of the part (i) of the main
theorem from \cite{Mat97} that, if $X$ possesses $PI(\gamma)$ with
some $\gamma>0$, then $E_\pi \tau_K<+\infty$  for every $K$ with
$\pi(K)>0$  (here and below, $E_\pi\eqdef \int_\XX
E_x\,\pi(dx)$). We will prove the following  stronger version of
this statement.

\begin{thm}\label{t51} Assume $X$ possess $PI(\gamma)$ with some
$\gamma>0$. Then for every closed set $K\subset \XX$ with
$\pi(K)>0$
$$
E_\pi e^{\alpha\tau_K}<+\infty,\quad  \alpha<{\gamma\pi(K)\over
2}.
$$
Moreover, the function $h_{-\alpha}^K(x)\eqdef E_x
e^{\alpha\tau_K}, x\in\XX$ possesses the following properties:

a) $h_{-\alpha}^K\in Dom(\Ef)$ and $h_{-\alpha}^K=1$ on $K$;

b) $\Ef(h_{-\alpha}^K,u)=\alpha(h_{-\alpha}^K,u)$ for every
quasi-continuous function $u\in Dom(\Ef)$ such that $u=0$
quasi-everywhere on $K$.
\end{thm}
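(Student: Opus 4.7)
The plan is to extend the variational / Dirichlet form argument used in \cite{Mat97} for $E_\pi\tau_K$ to the exponential moment, by working with the $\alpha$-potentials
$$
u_\alpha(x)\eqdef E_x\int_0^{\tau_K}e^{\alpha s}\,ds,\qquad x\in\XX,\ \alpha\ge 0,
$$
which by Fubini are related to the object of interest through the identity $h_{-\alpha}^K=1+\alpha u_\alpha$. Formally $u_\alpha$ solves the boundary value problem $(-A-\alpha)u_\alpha=\1_{K^c}$ on $K^c$ with $u_\alpha=0$ on $K$, and the corresponding Dirichlet-form identity to establish is
$$
\Ef(u_\alpha,v)=\alpha(u_\alpha,v)+(1,v),\qquad v\in Dom(\Ef),\ v=0\hbox{ q.e. on }K,
$$
which for $\alpha=0$ is the content of the main theorem of \cite{Mat97} and serves as the base case.

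The key quantitative input from $PI(\gamma)$ is the improved Poincar\'e inequality on test functions vanishing on $K$: for every $u\in Dom(\Ef)$ with $u=0$ q.e.\ on $K$ one has
$$
\Ef(u,u)\ge\gamma\pi(K)\|u\|_2^2.
$$
This follows at once from the form version of $PI(\gamma)$ combined with the Cauchy--Schwarz estimate $(1,u)^2=\left(\int_{K^c}u\,d\pi\right)^2\le\pi(K^c)\|u\|_2^2$. Equivalently, the killed semigroup $T_t^Kf(x)\eqdef E_x[f(X_t)\1_{\tau_K>t}]$, which vanishes on $K$ for every $t>0$, satisfies $\|T_t^Kf\|_2\le e^{-\gamma\pi(K)t}\|f\|_2$ by differentiating $\|T_t^Kf\|_2^2$ and invoking the bound just established on $\Ef^{sym}$.

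For the rigorous construction I would use the truncations $u_\alpha^{(T)}\eqdef\int_0^Te^{\alpha s}T_s^K\1_{K^c}\,ds$, which lie in $Dom(\Ef)$, vanish on $K$, and, by integration by parts against the killed generator, satisfy the analogue of the target identity with an extra remainder $-(e^{\alpha T}T_T^K\1_{K^c},v)$. Testing against $v=u_\alpha^{(T)}$ and invoking the spectral estimate of the previous paragraph produces uniform-in-$T$ bounds on both $\|u_\alpha^{(T)}\|_2$ and $\Ef(u_\alpha^{(T)},u_\alpha^{(T)})$ for every $\alpha<\gamma\pi(K)$, while the remainder term vanishes in $L_2$ because $\|e^{\alpha T}T_T^K\1_{K^c}\|_2\le\sqrt{\pi(K^c)}\,e^{(\alpha-\gamma\pi(K))T}\to 0$. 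A Fubini identification equates the pointwise monotone limit of $u_\alpha^{(T)}$ with the probabilistic $u_\alpha$, and a weak compactness / lower semicontinuity argument transfers the convergence to the form norm, producing $u_\alpha\in Dom(\Ef)$ along with the desired equation.

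With this equation in hand, property (b) follows from $\Ef(h_{-\alpha}^K,v)=\alpha\Ef(u_\alpha,v)=\alpha(\alpha u_\alpha+1,v)=\alpha(h_{-\alpha}^K,v)$ using $\Ef(1,\cdot)=0$ for the conservative generator, and property (a) is immediate from $h_{-\alpha}^K=1+\alpha u_\alpha$, together with $u_\alpha\in Dom(\Ef)$ and $u_\alpha=0$ on $K$. The restriction $\alpha<\gamma\pi(K)/2$ enters only through the $L_2$ integrability of $h_{-\alpha}^K$ itself, required in order that $h_{-\alpha}^K\in Dom(\Ef)\subset L_2$: by Jensen's inequality $\|h_{-\alpha}^K\|_2^2\le E_\pi e^{2\alpha\tau_K}$, and the latter -- finite via the identity $E_\pi e^{\beta\tau_K}=1+\beta(1,u_\beta)$ whenever $u_\beta\in L_2$ -- demands $2\alpha<\gamma\pi(K)$. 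The chief technical obstacle is the non-symmetric character of $\Ef$ during the limit passage: the sector condition is essential both for extracting uniform $\|\cdot\|_{\Ef,1}$-norm bounds from the testing identity and for justifying the weak compactness / lower semicontinuity step that transfers the Dirichlet equation from the truncations $u_\alpha^{(T)}$ to the limit $u_\alpha$, as well as for taking quasi-continuous modifications so that the boundary condition $u_\alpha=0$ q.e.\ on $K$ is preserved.
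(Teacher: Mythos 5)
Your proposal is correct in outline but takes a genuinely different route from the paper. The paper's proof proceeds by (i) analytically continuing the family of $\alpha$-potentials $z\mapsto h_z=E_\cdot e^{-z\tau_K}$ into the right half-plane (Lemma \ref{l51}), (ii) representing ``$\psi$-potentials'' by a Laplace-inversion contour integral (Lemma \ref{l52}, Corollary \ref{c51}), and (iii) applying the Poincar\'e inequality to a carefully engineered family $h_{\varrho_t}$, where $\varrho_t$ approximates $x\mapsto e^{\alpha x}-1$ and satisfies $[\varrho_t]'\le\alpha\varrho_t+C$. You work instead with the killed semigroup $T_t^K$ and the truncated resolvent integrals $u_\alpha^{(T)}=\int_0^T e^{\alpha s}T_s^K\1_{K^c}\,ds$: from $PI(\gamma)$ plus Cauchy--Schwarz you extract the killed-form lower bound $\Ef(u,u)\ge\gamma\pi(K)\|u\|_2^2$ on functions vanishing q.e.\ on $K$, hence $\|T_t^K\|_{L_2\to L_2}\le e^{-\gamma\pi(K)t}$, and then pass to the limit by weak compactness in $H_\Ef$. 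This is the more classical killed-process/resolvent argument, and---granting the technical points you correctly flag (integration by parts against $A^K$, quasi-continuous modifications, use of the sector condition in the non-symmetric limit passage)---it goes through. One remark: your own truncation estimate already gives a uniform $L_2$ bound on $u_\alpha^{(T)}$, hence $u_\alpha\in L_2$ and $h_{-\alpha}^K=1+\alpha u_\alpha\in L_2$, for \emph{all} $\alpha<\gamma\pi(K)$; the final Jensen detour $\|h_{-\alpha}^K\|_2^2\le E_\pi e^{2\alpha\tau_K}$ points the wrong way and introduces an unnecessary factor of $2$. In fact, on a two-state continuous-time chain with $K$ a single point, $\alpha<\gamma\pi(K)$ is the sharp threshold, so your method, carried out consistently, actually improves the constant in the stated theorem by a factor of two (the paper's proof uses $\mathrm{Var}_\pi\le\tfrac2\gamma\Ef$ rather than $\tfrac1\gamma\Ef$, which is the source of the extra $2$ there).
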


 \begin{proof} We assume $K$ to be fixed and omit the
 respective index in the notation, e.g. write $\tau$ for $\tau^K$ and $h_\alpha$ for $h_\alpha^K$.  For $z\in \CC$ with
 $\mathrm{Re}\,z>0$, define respective $z$-potential:
 $$
 h_z(x)=E_xe^{-z \tau}, \quad x\in\XX.
 $$

Denote by $H_\Ef$ the  $Dom(\Ef)$ considered as a Hilbert space
with the scalar product $(f,g)_{\Ef,1}\eqdef
(f,g)_{L_2}+\Ef(f,g)$. The following lemma shows  that
$\{h_z,\mathrm{Re}\, z>0\}$  can be considered as an analytical
extension of the family of $\alpha$-potentials
$\{h_\alpha,\alpha>0\}\subset H_\Ef$ that, in addition,  keeps the properties of
this family.

\begin{lem}\label{l51} 1) The function $z\mapsto h_z$ is analytic as a function
taking values in the Hilbert space $H_\Ef$.

2) For every $z$ with $\mathrm{Re}\, z>0$, the following
properties hold:

(i) $h_z=1$ quasi-everywhere on $K$;

(ii) $\Ef(h_z,u)=-z(h_z,u)$ for every quasi-continuous function
$u\in Dom(\Ef)$ such that $u=0$ quasi-everywhere on $K$.
\end{lem}

\begin{proof}
 Denote $h_z^m(x)=(-1)^m  E_x\tau^m e^{-z \tau}, x\in\XX, m\geq 1$. One can verify easily that, for every $m\in\NN$,
 $$
 {d^m\over dz^m} h_z=h_z^m
$$
on the set $\{z: \mathrm{Re}\,z>0\}$, with the function $z\mapsto
h_z$ is considered as a function taking values in $L_2$. In
addition,
$$
\|h_z^m\|^2_2\leq E_\pi \left|\tau^m e^{-z \tau}\right|^2=E_\pi
\tau^{2m}e^{-2\tau\mathrm{Re}\, z}\leq {(2m)!\over
(2\mathrm{Re}\,z)^{2m}},
$$
since ${(2\tau\mathrm{Re}\, z)^{2m}\over (2m)!}\leq
e^{2\tau\mathrm{Re}\, z}$. Therefore, \be\label{52}
{\|h^m_z\|_2\over m!}\leq \sqrt{C_{2m}^m\over
2^{2m}}(\mathrm{Re}\, z)^{-m}<(\mathrm{Re}\, z)^{-m},\quad m\in
\NN, \ee and hence the function
$$
\{z: \mathrm{Re}\,z>0\}\ni z\mapsto h_z\in L_2
$$
is analytic.

 For every
$\alpha,\alpha'>0$ we have $h_\alpha-h_{\alpha'}=0$
quasi-everywhere on $K$. Hence \be\label{51}\ba
\Ef(h_\alpha-h_{\alpha'},h_\alpha-h_{\alpha'})&=\Ef(h_\alpha,h_\alpha-h_{\alpha'})-\Ef(h_{\alpha'},h_\alpha-h_{\alpha'})\\
&=-\alpha(h_\alpha,h_\alpha-h_{\alpha'})+\alpha'(h_{\alpha'},h_\alpha-h_{\alpha'})\\
&=(\alpha'-\alpha)(h_{\alpha'},h_\alpha-h_{\alpha'})+ \alpha
(h_{\alpha'}-h_{\alpha},h_\alpha-h_{\alpha'}). \ea \ee Therefore,
for a given $\alpha>0$ and $\alpha'\to \alpha$, the family
$\{{h_{\alpha'}-h_{\alpha}\over \alpha'-\alpha}\}$ is bounded in
$H_\Ef$, and thus is weakly compact in $H_\Ef$. On the other
hand, this family converges to $h_\alpha^1$ in $L_2$. This yields
that the function  $(0,+\infty)\in\alpha\mapsto h_\alpha\in
H_\Ef$ is differentiable in a weak sense, and  $h^1_\alpha$
equals its (weak) derivative at the point $\alpha$.

We have $h_\alpha^1=0$ quasi-everywhere on $K$, since
$$
h_\alpha(x)=1\Leftrightarrow e^{-\alpha \tau}=1 \
P_x-\hbox{a.s.}\Leftrightarrow  \tau=0 \
P_x-\hbox{a.s.}\Leftrightarrow h_\alpha^1(x)=0.
$$
In addition, since $h_\alpha^1$ is a weak derivative of
$h_\alpha$, we have
$\Ef(h_\alpha^1,u)=-(h_\alpha,u)-\alpha(h_\alpha^1,u)$ for every
quasi-continuous function $u\in Dom(\Ef)$ such that $u=0$
quasi-everywhere on $K$.  Now, repeating the same arguments, we
get by induction that, for every $m\geq 1$, the function
$(0,+\infty)\in\alpha\mapsto h_\alpha\in H_\Ef$ is $m$ times
weakly differentiable, $h_\alpha^m$ is the corresponding weak
derivative of the $m$-th order, and the following properties hold:

(i$^m$) $h_\alpha^m=0$ quasi-everywhere on $K$;

(ii$^m$)
$\Ef(h_\alpha^m,u)=-(h_\alpha^{m-1},u)-\alpha(h_\alpha^m,u)$ for
every quasi-continuous function $u\in Dom(\Ef)$ such that $u=0$
quasi-everywhere on $K$.

Property (ii) with $u=h_\alpha^m$ and estimate (\ref{52}) yield
that, for a given $\alpha$, series
$$
H_z\eqdef h_\alpha+\sum_{m=1}^\infty{z^m\over m!} h_\alpha^m\in
H_\Ef
$$
converge in the circle $\{|z-\alpha|<\alpha\}$. The sum is a
weakly analytic $H_\Ef$-valued function, and hence is  analytic
(\cite{Rud73}, Theorem 3.31). On the other hand, the same series
converge in $L_2$ to $h_z$. This yields that $h_z=H_z$ in the
circle $\{|z-\alpha|<\alpha\}$. By taking various
$\alpha\in(0,+\infty)$, we get that the function $z\mapsto h_z$
is  an $H_\Ef$-valued analytic function inside the angle
$\Df_1\eqdef\{z:\mathrm{Re}\, z> |\mathrm{Im}\,z|\}$. In
addition, properties (i$^m$), (ii$^m$) of the $m$-th coefficients
of the series ($m\geq 1$) provide that $h_z$ satisfy (i),(ii)
inside the angle.

Now, we complete the proof using the following iterative
procedure. Assume that the function $z\mapsto h_z\in H_\Ef$ is
analytic in some domain $\Df\subset \{z:\mathrm{Re}\,z >0\}$ and
satisfy (i),(ii) in this domain. Then the same arguments with
those used above show that, for every $z_0\in \Df$, the domain
$\Df$ can be extended to
$\Df'\eqdef\Df\cup\{z:|z-z_0|<\mathrm{Re}\,z_0\}$ with the
function $z\mapsto h_z$ still being analytic in $\Df'$ and
satisfying (i),(ii) in the extended domain. Therefore, we prove
iteratively that the required statement holds true in every angle
$\Df_k\eqdef\{z:\mathrm{Re}\, z> {1\over k} |\mathrm{Im}\,z|\}$.
Since $\cup_k\Df_k=\{z:\mathrm{Re}\,z>0\}$, this completes the
proof.\end{proof}

 Next, we consider "$\psi$-potentials" that correspond to   functions $\psi:\ax\to \Re$.
Denote
$$h_\psi(x)=E_x\psi(\tau),\quad x\in\XX.
$$
The following statement is an appropriate modification of the
inversion formula for the Laplace transform.

\begin{lem}\label{l52} Let $\psi\in C^2(\Re)$ have a compact support and $\mathrm{supp}\,\psi\subset [0,+\infty)$.
Denote  $\Psi(z)=\int_\Re e^{zt}\psi(t)\, dt,$ $z\in\CC$.

 The function
$h_\psi$ belongs to $H_\Ef$ and admits integral representation
\be\label{53} h_\psi={1\over 2\pi i
}\int_{\sigma-i\infty}^{\sigma+i\infty}\Psi(z)h_z\,dz, \ee where
$\sigma>0$ is arbitrary, and the integral is well defined as an
improper Bochner integral of an $H_\Ef$-valued function.
\end{lem}

\begin{proof} First, let us show that the integral in the right
hand side of (\ref{53}) is well defined. We have by condition (ii)
of Lemma \ref{l51} that
$$
\Ef(h_z,h_z)=\Ef(h_z,h_z-1)=-z(h_z,h_z-1).
$$
For any $z$ with $\mathrm{Re}\,z>0$, we have $|h_z(x)|\leq E_x
e^{-\tau\mathrm{Re}\,z}\leq 1$, and thus $|h_z(x)-1|\leq 2$.
Hence,
$$
\|h_z\|_{H_\Ef}= \sqrt{\|h_z\|_2^2+\Ef(h_z,h_z)}\leq
\sqrt{1+2|z|}.
$$
On the other hand, for $\psi$ satisfying conditions of the lemma,
$$
z^2\Psi(z)=\int_0^\infty e^{zt}\psi''(t)\, dt,\quad
|z^2\Psi(z)|\leq \int_0^\infty e^{t\mathrm{Re}\,z}|\psi''(t)|\,
dt.
$$
Thus, on the line $\sigma+i\Re\eqdef\{z:\mathrm{Re}\, z=\sigma\}$,
the function $z\mapsto \Psi(z)h_z\in H_\Ef$ admits the following
estimate:
$$
\|\Psi(z)h_z\|_{H_\Ef}\leq C|z|^{-{3\over 2}},
$$
and therefore it is integrable on $\sigma+i\Re$. Denote by
$g_\psi\in H_\Ef$ corresponding integral. In order to prove that
$h_\psi=g_\psi$, it is sufficient to prove that $h_\psi$ and
$g_\psi$ coincide as elements of $L_2$. Hence, we have reduced the
proof of the lemma to verification of the following "weak
$L_2$-version" of (\ref{53}): \be\label{54} \int_\XX h_\psi v\,
d\pi={1\over 2\pi i}\int_\XX
\int_{\sigma-i\infty}^{\sigma+i\infty}\Psi(z)h_z(x)v(x)\,dz\pi(dx),\quad
v\in L_2. \ee Recall that $h_z(x)=E_xe^{-z\tau}$, and hence the
right hand side of (\ref{54}) can be rewritten to the form
$$
{1\over 2\pi i}\int_\XX
\int_{\sigma-i\infty}^{\sigma+i\infty}E_x\Psi(z)
e^{-z\tau}v(x)\,dz\pi(dx)={1\over 2\pi i}\int_\XX E_x
\int_{\sigma-i\infty}^{\sigma+i\infty}\Psi(z)
e^{-z\tau}v(x)\,dz\pi(dx).
$$
Here, we have changed the order of integration using Fubini's
theorem. This can be done, because $|\Psi(z)|\leq C|z|^{-2}$, and
therefore
$$
E_x \int_{\sigma-i\infty}^{\sigma+i\infty}|\Psi(z) e^{-z\tau}|\,
dz =h_\sigma(x) \int_{\sigma-i\infty}^{\sigma+i\infty}|\Psi(z)|\,
dz\leq C h_\sigma(x).
$$
The function $\Psi$ is the (two-sided) Laplace transform for
$\psi$, up to the change of variables $p\mapsto -z$. We write the
inversion formula for the Laplace transform in the terms of
$\Psi$ and, after the change of variables, get
$$
\psi(t)={1\over 2\pi
i}\int_{-\sigma-i\infty}^{-\sigma+i\infty}e^{pt}\Psi(-p)\,
dp={1\over 2\pi
i}\int_{\sigma-i\infty}^{\sigma+i\infty}e^{-zt}\Psi(z)\, dz,
\quad t\in \ax.
$$
Hence, the right hand side of (\ref{54}) is equal
$$
\int_\XX E_x\psi(\tau)v(x)\pi(dx)=\int_\XX h_\psi v\, d\pi,
$$
that proves (\ref{54}).
\end{proof}

\begin{cor}\label{c51}  Let $\psi\in C^3(\Re)$ and $\mathrm{supp}\,\psi'\subset [0,+\infty)$.
Then $h_\psi\in Dom(\Ef)$ and   \be\label{55}
\Ef(h_\psi,u)=(h_{\psi'},u) \ee for every $u\in Dom(\Ef)$ such
that $u=0$ quasi-everywhere on $K$.
\end{cor}
\begin{proof} Assume first that $\int_{\ax} \psi'(x)\, dx=0$.  Then both $\psi$ and $\psi'$ satisy conditions of Lemma \ref{l52}.
We have $\tilde \Psi(z)\eqdef\int_\Re e^{zt}\psi'(t)\,
dt=-z\Psi(z)$. Hence, from the representation (\ref{53}) for
$h_\psi$ and $h_{\psi'}$ and relation
$\Ef(h_z,u)=-z(h_z,u),\mathrm{Re}\, z>0$, we get
$$
\Ef(h_\psi,u)={1\over 2\pi i
}\int_{\sigma-i\infty}^{\sigma+i\infty}\Psi(z)\Ef(h_z,u)\,dz={1\over
2\pi i }\int_{\sigma-i\infty}^{\sigma+i\infty}\tilde
\Psi(z)(h_z,u)\,dz=(h_{\psi'},u).
$$

The general case can be reduced to the one considered above by
the following limit procedure. Since $\mathrm{supp}\,\psi'\subset
[0,+\infty)$, there exist $C\in \Re$ and  $x_*\in \ax$ such that
$\psi(x)=C, x\geq x_*$. Take a function $\chi\in C^3(\Re)$ with
$\mathrm{supp}\,\chi\subset [0,1]$, and put
$$
\psi_t(x)=\psi(x)-C\chi(x-t),\quad x\in\Re, t>x_*.
$$
Then every $\psi_t$ satisfies the additional assumption
$\int_{\ax} [\psi_t]'(x)\, dx=0$, and thus $h_{\psi_t}$ belongs
to $Dom(\Ef)$ and satisfies (\ref{55}). It can be verified easily
that $h_{\psi_t}\to h_\psi, t\to \infty$ in $L_2$ sense. In
addition,
$$
\Ef(h_{\psi_t},h_{\psi_t})=(h_{[\psi_t]'},h_{\psi_t})\to
(h_{[\psi]'},h_{\psi})<+\infty,\quad t\to+\infty
$$
(here, we have used (\ref{55}) with $u=h_{\psi_t}$). This means
that the family $\{h_{\psi_t}\}$ is bounded in $H_\Ef$, and hence
is weakly compact in $H_{\Ef}$. Therefore, $h_{\psi_t}\to h_\psi,
t\to \infty$ weakly in $H_{\Ef}$. Since $h_{[\psi_t]'}\to
h_{\psi'}, t\to \infty$ in $L_2$ sense, (\ref{55}) for $\psi$
follows from (\ref{55}) for $\psi_t$.
\end{proof}
 Now, we are ready to complete the proof of the theorem. Let us  fix
 $\alpha<{\gamma\pi(K)\over 2}$, and construct the family of the functions $\varrho_t, t\geq 1$ that approximate the function
 $\varrho: x\mapsto e^{\alpha x}-1$ appropriately. First, we take
 function
 $\chi\in C^3(\Re)$ such that $\chi\geq 0, \chi'\leq 0, \chi(x)=1,
 x\leq0$, and $\chi(x)=0, x\geq 1$. We put
 $$
 \rho_t(x)=\int_0^x\alpha e^{\alpha y}\chi(y-t)\, dy, \quad
 x\geq 0, t\geq 1.
 $$
 By the construction, the derivatives of the functions $\rho_t, t\geq
 1$ have the following properties:

 a) $[\rho_t]'\geq 0$ and $[\rho_t]'(x)=0, x\geq t+1$;

 b) $[\rho_s]'\leq[\rho_t]', s\leq t$.

 Since $\rho_t(0)=0, t\geq 1$, the latter property yields that $\rho_s\leq \rho_t, s\leq
 t$. In addition,
 $$
 [\rho_t]''(x)= \alpha e^{\alpha x} \chi'(x)+\alpha^2 e^{\alpha x} \chi(x)\leq
 \alpha^2 e^{\alpha x} \chi(x)=\alpha [\rho_t]'(x),
 $$
 since $\chi'\leq 0$. This and relation $[\rho_t]'(0)=\alpha (\rho_t(0)+1)$
 provide
 \be\label{56} [\rho_t]'\leq \alpha(\rho_t+1).
 \ee

At last,  we take  function $\theta\in C^3(\Re)$ such that
$\theta'\geq 0, \theta(x)=0,
 x\leq 0$, and $\theta(x)=1, x\geq 1$. We put
 $$
 \varrho_t(x)=\begin{cases} \theta\left({xt}\right)\rho_t(x),&x\geq
 0\\
  0,& x<0
 \end{cases},\quad t\geq 1.
 $$
We have $\varrho_t\uparrow \varrho, t\uparrow \infty$. In
addition, by (\ref{56}), \be\label{57}
[\varrho_t]'(x)=t\theta'(tx)\rho_t(x)+\theta(tx)[\rho_t]'(x)\leq
t\sup_{y}\theta'(y)\rho_t(t^{-1})+\alpha(\rho_t(x)+1)\leq
\alpha\varrho_t(x)+C \ee with an appropriate constant $C$ (recall
that $t\rho_t(t^{-1})=t\alpha(e^{\alpha t^{-1}}-1)\to \alpha^2,
t\to \infty$).

Every $\varrho_t$ satisfies conditions of Corollary \ref{c51},
and hence $$
\int_{\XX}h_{\varrho_t}^2\,d\pi-\left(\int_{\XX}h_{\varrho_t}\,d\pi\right)^2\leq
{2\over \gamma}\Ef(h_{\varrho_t}, h_{\varrho_t})={2\over
\gamma}(h_{[\varrho_t]'}, h_{\varrho_t})\leq {2\alpha\over
\gamma}(h_{\varrho_t}, h_{\varrho_t})+C\int_\XX h_{\varrho_t}\,
d\pi.$$ Here, we have used subsequently property $PI_2(\gamma)$,
equality  (\ref{56}) with $u=h_{\varrho_t}$, and (\ref{57}).

We have $h_{\varrho_t}=0$ on $K$ because $\varrho_t(0)=0$. Then,
by the Cauchy inequality,
$$\ba
\int_{\XX}h_{\varrho_t}^2\,d\pi-&\left(\int_{\XX}h_{\varrho_t}\,d\pi\right)^2=
\int_{\XX}h_{\varrho_t}^2\,d\pi-\left(\int_{\XX\setminus K
}h_{\varrho_t}\,d\pi\right)^2\\
&\geq (1-\pi(\XX\setminus K))
\int_{\XX}h_{\varrho_t}^2\,d\pi=\pi(K)(h_{\varrho_t},
h_{\varrho_t}).\ea
$$
Therefore,  $$(h_{\varrho_t}, h_{\varrho_t})\leq {2\alpha \over
\gamma \pi(K)}(h_{\varrho_t}, h_{\varrho_t})+C\int_\XX
h_{\varrho_t}\, d\pi,$$ which implies that \be\label{58}
(h_{\varrho_t}, h_{\varrho_t})\leq C\int_\XX h_{\varrho_t}\, d\pi
\ee (recall that $\alpha<{\gamma\pi(K)\over 2}$). One can verify
easily that (\ref{58}) yields that the $L_2$-norms of the
functions $h_{\varrho_t}$ are uniformly bounded. Since
$\varrho_t\uparrow \varrho$, this implies that the function
$$
h_\varrho (x)\eqdef E_xe^{\alpha \tau}-1,\quad x\in\XX
$$
belongs to $L_2$, and $h_{\varrho_t}\to h_\varrho,
 t\to \infty$ in $L_2$. Similarly to the proof of Corollary
\ref{c51}, one can verify that  $\{h_{\varrho_t}\}$ is a
bounded subset in $H_\Ef$, and hence $h_{\varrho_t}\to h_\varrho,
 t\to \infty$ weakly in $H_\Ef$. This proves statement a) of the
 theorem.  In order to prove statement b), we apply (\ref{55}) to
$\psi=\varrho_t$, and pass to the limit as $t\to +\infty$. The
 theorem is proved.\end{proof}

\section{Poincar\'e inequality for diffusions: criterion in the terms of  hitting times}

In this section, we apply our general results  to  diffusion
processes on non-compact manifolds. The Poincar\'e inequality for
diffusions was studied
 extensively by numerous authors. We refer to \cite{RW04}, \cite{Wang00} for various sufficient conditions for this inequality and further references. The main result of this
section -- Theorem \ref{t61} -- is a refinement of Theorem
\ref{t42}, Theorem \ref{t51}, and Proposition \ref{p22}.  It
gives {necessary and sufficient} condition for the Poincar\'e
inequality in the terms of  hitting times of the diffusion
process.

Let $\XX$ be a connected locally compact  Riemannian manifold of
dimension $d$, and $X$ be a diffusion process on $\XX$. On a given
local chart of the manifold $\XX$, the generator of the process
$X$ has the form
$$
A=\sum_{j=1}^d a_j\prt_j+{1\over 2}\sum_{j,k=1}^d
b_{jk}\prt^2_{jk},
$$
where $a=\{a_j\}_{j=1}^d$ and $b=\{b_{jk}\}_{j,k=1}^d$ are the
drift and diffusion coefficients of the process $X$ on this chart,
respectively. We assume the coefficients $a,b$ to be H\"older
continuous on every local chart, and the drift $b$ coefficient to
satisfy ellipticity condition
$$
\sum_{j,k=1}^d b_{jk} v_jv_k\geq c\sum_{j=1}^dv_j^2
$$
uniformly on every compact. Under these conditions, the
transition function of the process $X$ has a positive density
w.r.t. Riemannian volume, and this density  is a continuous
function on $(0,+\infty)\times \XX\times \XX$. One can easily
deduce this from the same statement for diffusions in $\Re^d$
(e.g. \cite{IKO62}) and strong Markov property of $X$. This
implies that $X$ satisfies  the extended Doeblin condition on
every compact subset of $\XX$.

Let $\pi\in \Pf(\XX)$ be an invariant measure for the process $X$
(we assume invariant measure to exist). Denote
by $\Ef$ the Dirichlet form on $L_2(\XX,\pi)$ corresponding to
$X$.

\begin{thm}\label{t61} The following statements are equivalent:

1) the Poincar\'e inequality holds true with some  constant $c$:
$$
\int_\XX |f|^2d\pi-\left|\int_\XX f d\pi\right|^2\leq c
\,\Ef(f,f),\quad f\in Dom(\Ef).$$

2) the process $X$ admits an exponential $\phi$-coupling for some
function $\phi$;

3) for every closed subset $K\subset \XX$ with $\pi(K)>0,$ there
exists $\alpha>0$ such that
$$E_\pi e^{\alpha\tau_K}<+\infty.
$$

\noindent In addition, 1) -- 3) hold true assuming that

3$\,'$) there exists a compact subset $K\subset \XX$ and
$\alpha>0$ such that
$$E_x e^{\alpha\tau_K}<+\infty\quad \hbox{for $\pi$-almost all}\quad x\in X.
$$

\end{thm}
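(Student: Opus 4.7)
My plan is to close the cycle $1)\Rightarrow 3)\Rightarrow 3')\Rightarrow 2)\Rightarrow 1)$; since $3)\Rightarrow 3')$ is immediate, three nontrivial steps remain. The first, $1)\Rightarrow 3)$, is a direct application of Theorem \ref{t51}, with $\gamma=1/c$ read off from the Poincar\'e inequality, yielding $E_\pi e^{\alpha\tau_K}<\infty$ for every $\alpha<\gamma\pi(K)/2$.

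For $3')\Rightarrow 2)$ I would apply Proposition \ref{p22} to the compact set $K$ furnished by 3'). The diffusion setup already supplies the extended Doeblin condition on every compact (from continuity and strict positivity of the transition density), so condition 1) of Proposition \ref{p22} is automatically waived, leaving only conditions 2) and 3). These require, after possibly shrinking $\alpha$ to some $\alpha'<\alpha$, that $E_xe^{\alpha'\tau_K}<\infty$ for every $x$ and a uniform bound on $E_xe^{\alpha'\tau_K^t}$ for $x\in K,\ t\in[0,S]$. Writing $h(y)=E_y e^{\alpha\tau_K}$, the strong Markov property at time $t_0$ gives the decomposition $E_{x_0}e^{\alpha'\tau_K}\le e^{\alpha't_0}(1+T_{t_0}h(x_0))$. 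Since $\{h<\infty\}$ is $\pi$-full and hence of full Riemannian volume on compacts (as $\pi$ has a positive density), and since $h$ satisfies an elliptic equation of the form $(A+\alpha)h=0$ on $\XX\setminus K$ so that $h$ is locally bounded on $\{h<\infty\}$ by interior estimates, the integral $T_{t_0}h(x_0)$ is finite for every $x_0$. Condition 3) follows from the same estimate combined with joint continuity of $(t,x)\mapsto T_th(x)$ on $[0,S]\times K$.

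The remaining implication $2)\Rightarrow 1)$ is the heart of the theorem. I would apply Theorem \ref{t44} with $X^\diamond$ the reversible diffusion whose $L_2(\pi)$-generator is the symmetric part $A^\diamond=\tfrac12(A+A^*)$. Since $A^\diamond$ is a symmetric elliptic second-order operator on $L_2(\pi)$, it generates a genuine reversible diffusion on $\XX$ with the same invariant measure $\pi$ and with Dirichlet form coinciding with $\Ef$. Taking $\Df$ to be a common core (e.g.\ smooth compactly supported functions, on which $A$, $A^*$ and $A^\diamond$ all act as the corresponding second-order differential operators) verifies (i)--(iii) of Theorem \ref{t44}, so the resulting inequality $PI(\beta/2)$ is exactly statement 1).

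The main obstacle is verifying the remaining hypothesis of Theorem \ref{t44}: that $X^\diamond$ itself admits an exponential coupling. I would deduce this via Proposition \ref{p22} applied to $X^\diamond$. The extended Doeblin condition holds for $X^\diamond$ as well, so the work reduces to supplying exponential hitting time moments for $X^\diamond$. Since $A$ and $A^\diamond$ have identical second-order parts and differ only in drift by the divergence-free skew part of $A$ (determined by the invariant density $\rho$), a Lyapunov function $\phi$ extracted from the $\phi$-coupling of $X$ can, after a suitable modification to control its interaction with $\nabla\log\rho$, be shown to satisfy a parallel Lyapunov condition $A^\diamond\phi\le -\alpha'\phi+C'$ outside a compact. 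Proposition \ref{p21} then yields the required coupling for $X^\diamond$ and closes the cycle. The delicate point is precisely this transfer of the Lyapunov condition through the skew drift, and it is where the diffusion structure (common diffusion coefficient, regularity of $\rho$) enters decisively.
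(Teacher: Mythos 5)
Your cycle $1)\Rightarrow 3)\Rightarrow 3')\Rightarrow 2)\Rightarrow 1)$ and your tool set (Theorem \ref{t51}, Proposition \ref{p22}, Theorem \ref{t42}/\ref{t44}) match the paper's, and you correctly observe that the extended Doeblin condition on compacts comes free from positivity and continuity of the transition density, waiving condition 1) of Proposition \ref{p22}. But there are two real gaps. In $3')\Rightarrow 2)$, you want local boundedness of $h(x)=E_xe^{\alpha\tau_K}$ from the PDE $(A+\alpha)h=0$ on $\XX\setminus K$ plus interior elliptic estimates; a priori, though, $h$ is only finite $\pi$-a.e.\ and no regularity theory applies to a function not yet known to be finite on an open set. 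The paper sidesteps this by introducing an auxiliary $A$-harmonic minorant $h(x)=\int_{\partial D}E_ye^{\alpha\tau_K}\mu_x(dy)$ (a bona fide harmonic function), appealing to the Harnack inequality for local boundedness, and then reconstructing $\phi$ via strong Markov, H\"older, and a Kac-formula bound on $E_xe^{a\theta}$. Likewise, your verification of condition 3) of Proposition \ref{p22} by ``joint continuity of $T_th$'' does not close: $T_th(x)=\int_\XX h(y)P_t(x,dy)$ integrates $h$ over the non-compact state space, so local boundedness of $h$ is not enough; the paper uses an iterative stopping-time decomposition and the estimate $E_xe^{-a\sigma^k}\le q^k$ to obtain the required uniform bound.

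The larger issue is $2)\Rightarrow 1)$. You are right that Theorem \ref{t42} requires reversibility and that for a nonsymmetric diffusion the natural route is through Theorem \ref{t44} and the symmetrization $X^\diamond$. (The paper's own one-line citation of Theorem \ref{t42} is silent on this.) But Theorem \ref{t44} demands that \emph{$X^\diamond$ itself} admit an exponential $\phi$-coupling, and you only assert, without argument, that ``a Lyapunov function $\phi$ extracted from the $\phi$-coupling of $X$ can, after a suitable modification to control its interaction with $\nabla\log\rho$, be shown to satisfy'' the analogous Lyapunov bound for $A^\diamond$. That is a genuine gap: the $\phi$-coupling hypothesis does not directly produce a Lyapunov function for $X$, and even given one, the drift of $A^\diamond$ differs from that of $A$ by the full skew component, so there is no general mechanism that transports $A\phi\le-\alpha\phi+C$ into $A^\diamond\phi'\le-\alpha'\phi'+C'$ without additional structural assumptions. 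Until that transfer is actually carried out, the implication $2)\Rightarrow 1)$ is not established in your write-up.
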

\begin{proof} Implications 2) $\Rightarrow$ 1) and
1) $\Rightarrow$ 3) are proved in Theorems \ref{t42} and
\ref{t51}, respectively. Hence, we need to prove implication
3$\,'$) $\Rightarrow$ 2), only. We will prove it using
Proposition \ref{p22}. In order to simplify exposition, we
consider the case $\XX=\Re^d,$ only. One can easily extend the
proof to the general case by a standard localization procedure.

We take $\tilde \alpha\in (0,\alpha)$ and put
$\phi(x)=E_xe^{\tilde \alpha\tau_K}, x\in\XX$. Let us show that
$\phi$ is locally bounded; that is, condition 2) of Proposition
\ref{p22} holds true with $\alpha$ replaced by $\tilde \alpha$.

Let $x_0\in \Re^d$ and $0<r_0<r_1$ be such that $K\subset
\{x:\|x-x_0\|<r_0\}$. Denote $D=\{x:\|x-x_0\|<r_1\}\setminus K$,
$\theta=\inf\{t:X_t\in \prt D\}$, and  $\mu_{x}(dy)\eqdef
P_x(X_\theta\in dy), x\in D$.

Consider auxiliary function
$$
h(x)=\int_{\prt D}E_{y}e^{\alpha \tau_K}\, \mu_x(dy),\quad x\in D.
$$
This function is $A$-harmonic in $D$, hence it satisfies the
Harnack inequality (see \cite{KS81}). Namely, there exists
$C\in\ax$ such that
$$
h(x_1)\leq Ch(x_2)
$$
for every $y\in D$, and $x_1,x_2\in \{x: \|x-y\|<{1\over
2}\mathrm{dist}(y,\prt D)\}$. On the other hand, by the strong
Markov property of $X$, we have
$$
E_xe^{\alpha\tau_K}=E_x(e^{\alpha\theta}\phi(X_\theta))\geq
E_x\phi(X_\theta)=h(x), \quad x\in D.
$$
Hence, under condition $3')$, $h(x)<+\infty$ for $\pi$-a.a. $x\in
D$. In addition,  $\mathrm{supp}\, \pi=\XX$;  one can easily
verify this fact using positivity of the transition probability
density. Therefore, the function $h$ is bounded on every compact
$S\subset D$.

The function $h$ can be written in the form
$$
h(x)=E_xe^{\alpha\tau_K^\theta},\quad \tau_K^\theta=\inf\{t\geq 0:
X_{t+\theta}\in K\}.
$$
For $x\in D$, we have $\tau_K=\theta+ \tau_K^\theta$ $P_x$-a.s.,
and therefore
$$
E_xe^{\tilde \alpha\tau_K}\leq [E_x(e^{{\alpha\tilde\alpha\over
\alpha-\tilde\alpha}\theta})]^{\alpha-\tilde \alpha\over
\alpha}[h(x)]^{\alpha'\over \alpha}.
$$
Using the Kac formula one can show that, for every $a>0$,  the
function $x\mapsto E_x e^{a\theta}$ is bounded on $D$ (this fact
is quite standard and hence we do not go into details here).
Therefore, the function $\phi$ is bounded on every compact
$S\subset D$.

Next, consider closed ball $E=\{x:\|x-x_0\|\leq r_0\}$ with the
boundary $S=\{x:\|x-x_0\|= r_0\}\subset D$,  and put
$\sigma=\inf\{t:X_t\in S\}$.

For $x\in E$, we have by the strong Markov property of $X$ that
$$
\phi(x)\leq E_x(e^{\tilde \alpha\sigma}\phi(X_\sigma))\leq (E_x
e^{\alpha\sigma})\sup_{y\in S}\phi(y).
$$
The function $x\mapsto E_x e^{\tilde \alpha\sigma}$ is bounded on
$E$ (again, we do not give a detailed discussion here). Hence
$\phi$ is bounded on $E$. Since $r_0$ can be taken arbitrarily
large, this means that that $\phi$ is locally bounded.

Now, let us  verify that condition 3) of Proposition \ref{p22}
holds true with $\alpha$ replaced by $\tilde \alpha$. We put
$\sigma^0=0$,
$$
\sigma^{2n-1}=\inf\{t\geq \sigma^{2n-2}:X_t\in S\},\quad
\sigma^{2n}=\inf\{t\geq \sigma^{2n-1}:X_t\in K\},\quad n\geq 1.
$$
For any  $a>0$, one has   $$ q\eqdef \max\left[\sup_{x\in
K}E_xe^{-a\tau_S}<1, \sup_{x\in S}E_xe^{-a\tau_K}<1\right]<1
$$
because $\mathrm{dist}\,(K,S)>0$ and $X$ is a  Feller process
with continuous trajectories. Therefore,
 \be\label{83}
E\Big[e^{-a(\sigma^{k+1}-\sigma^k)}\Big|\Ff_{\sigma^k}\Big]\leq
q\quad \hbox{a.s.,} \quad k\geq 0. \ee We have
$$
E_xe^{\tilde \alpha\tau_K^t}=\sum_{k=0}^\infty E_xe^{\tilde
\alpha\tau_K^t}\1_{\sigma^k\leq t<\sigma^{k+1}}, \quad x\in K.
$$
For $k$ even,  $X_t\in E$ a.s. on the set
$C_{k,t}\eqdef\{\sigma^k\leq t<\sigma^{k+1}\}$. In addition,
$C_{k,t}\in \Ff_t$. Hence
$$\ba
E_xe^{\tilde \alpha\tau_K^t}\1_{\sigma^k\leq t<\sigma^{k+1}}&=
E_x\left(\1_{\sigma^k\leq t<\sigma^{k+1}}E\Big[e^{\tilde
\alpha\tau_K^t}\Big|\Ff_t\Big]\right)= E_x\1_{\sigma^k\leq
t<\sigma^{k+1}}\phi(X_t)\\
&\leq \sup_{y\in E}\phi(y)\,P_x(\sigma^k\leq t<\sigma^{k+1}),
\quad k=2n. \ea
$$
For $k$ odd,  $\tau_K^t=\sigma^{k+1}-t\leq \sigma^{k+1}-\sigma^k$
a.s. on the set $C_{k,t}$. Hence
$$\ba E_xe^{\tilde \alpha\tau_K^t}\1_{\sigma^k\leq t<\sigma^{k+1}}&\leq
E_x\1_{\sigma^k\geq t}e^{\tilde
\alpha(\sigma^{k+1}-\sigma^k)}=E_x\left(\1_{\sigma^k\leq
t}E\Big[e^{\tilde
\alpha(\sigma^{k+1}-\sigma^k)}\Big|\Ff_{\sigma^k}\Big]\right)\\
&=E_x\1_{\sigma^k\leq t}\phi(X_{\sigma^k})\leq \sup_{y\in
E}\phi(y)\, P_x(\sigma^k\leq t).\ea
$$
Therefore,
$$
E_xe^{\tilde \alpha\tau_K^t}\leq \sup_{y\in
E}\phi(y)\sum_{k=0}^\infty P_x(\sigma^k\leq t), \quad x\in K.
$$
It follows from (\ref{83}) that $E_x e^{-a\sigma^k}\leq q^k, x\in
K$. Then
$$
P_x(\sigma^k\leq t)=P_x(-\sigma^k\geq -t)\leq e^{at}q^k, \quad
k\geq 0, x\in K,
$$
and consequently
$$
\sup_{x\in K, t\in [0,S]} E_xe^{\tilde \alpha\tau_K^t}\leq
e^{aS}(1-q)^{-1}\sup_{y\in E}\phi(y)<+\infty.
$$
We have verified that conditions 2), 3) of Proposition \ref{p22}
hold true with $\alpha$ replaced by $\tilde \alpha$. Also, we have
already seen that $X$ satisfies the extended Doeblin condition on
$K$. We complete the proof of the theorem applying Proposition
\ref{p22}.

\end{proof}

\begin{rem} The criterion given in Theorem \ref{t61} extends, in particular, the sufficient
condition from \cite{RW04}, Theorem 1.1. Indeed, under condition
(1.1) of the latter theorem one can verify that there exists a
function $\Phi:\Re\to \Re$ such that $\Phi(x)\to +\infty, |x|\to
\infty$ and the function $\phi=\Phi(\rho)$ satisfies the
Lyapunov-type condition (\ref{lyap}). This  yields existence of
an exponential $\phi$-coupling and hence the spectral gap
property.

On the other hand, it is worth to compare Theorem \ref{t61} with
the necessary and sufficient condition given in \cite{Mat97}. The
principal difference is that Theorem \ref{t61} deals with the
Poincar\'e inequality itself while in the part (ii) of the main
theorem in \cite{Mat97} some weak version of this inequality is
established. In addition,  sufficient condition of \cite{Mat97}
involves  the whole collection of hitting times $\{\tau_K: K$ is
closed and $\pi(K)\geq {1\over 2}\}$, while in Theorem \ref{t61}
condition $3'$) is imposed on one hitting time $\tau_K$, which
makes this theorem mush easier in application.
\end{rem}

\appendix\section{Proofs of Theorems \ref{tA1}, \ref{tA2} and Proposition \ref{p22}}

\subsection{Proof of Theorem \ref{tA1}}

Under conditions of Theorem \ref{tA1}, consider two independent
copies $Y^1,Y^2$ of the process $X$ with $Y^1_0=y^1,Y^2_0=y^2$
($y^1,y^2\in \XX$ are  arbitrary). It follows from condition 3)
that $\sup_{x\in K, t\in \ax}E_x\phi(X_t)<+\infty.$ In
particular, $\phi$ is bounded  on $K$. Denote
$$
D_1=\sup_{x\in K}\phi(x), \quad D_2=\sup_{x\in K, t\in
\ax}E_x\phi(X_t).
$$
 Take arbitrary
$\gamma\in (0,\alpha)$ and choose $c>D_1$ such that
$$
\delta\eqdef \sup_{x\in K, t\in\ax}
E_x\phi(X_t)\1_{\phi(X_t)>c}<1-{\gamma\over \alpha}.
$$
Define
$$K'\eqdef\{\phi\leq c\},\quad \theta\eqdef\inf\{t: Y_t^1\in K', Y_t^2\in K'\}.$$

\begin{lem}
\be\label{a1} E[\phi(Y_t^1)+\phi(Y^2_t)]\1_{\theta>t}\leq D_3
e^{-\gamma t}[\phi(y^1)+\phi(y^2)], \quad y^1,y^2\in \XX, \ee
$$
D_3=2D_2+3+2(D_2+1)^2\left(1-{\gamma\over
\alpha}-\delta\right)^{-1}.
$$
\end{lem}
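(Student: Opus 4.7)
The plan is to decompose $\{\theta>t\}$ according to the first time either copy enters $K$ and then to iterate the strong Markov property at successive entries. Introduce $\tau^i=\inf\{s\ge 0:Y^i_s\in K\}$ for $i=1,2$ and $\tau=\tau^1\wedge\tau^2$. The decisive observation is that $K\subset K'$ (since $D_1<c$), so on $\{\theta>t\}$ whenever one of $Y^1,Y^2$ lies in $K$ the other must satisfy $\phi>c$---otherwise both would be in $K'$ simultaneously, violating the definition of $\theta$. This is the mechanism through which condition~3) of Theorem~\ref{tA1} will be activated at each iteration.

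On the ``free flight'' event $\{\tau>t\}$, neither process has visited $K$ by time $t$, so by condition~2) of Theorem~\ref{tA1} together with independence of $Y^1,Y^2$,
\[
E\bigl[(\phi(Y^1_t)+\phi(Y^2_t))\1_{\tau>t}\bigr]\le E[\phi(Y^1_t)\1_{\tau^1>t}]+E[\phi(Y^2_t)\1_{\tau^2>t}]\le e^{-\alpha t}[\phi(y^1)+\phi(y^2)],
\]
which already fits the target form since $\gamma<\alpha$. On the complementary event $\{\tau\le t\}$, by symmetry suppose $\tau=\tau^1\le\tau^2$. Since $Y^1_{\tau^1}\in K\subset K'$, the event $\{\theta>t\}$ forces $\phi(Y^2_{\tau^1})>c$. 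I then apply strong Markov at $\tau^1$ and iterate: at each subsequent step, the process currently in $K$ has $\phi$-value at most $D_1$ while the other is constrained to $\{\phi>c\}$; a fresh application of strong Markov at the next visit of either copy to $K$ uses condition~3) to extract a multiplicative factor $\delta$ from the ``other'' process, while condition~2) supplies an $e^{-\alpha(\cdot)}$ decay between consecutive visits.

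Pulling the target factor $e^{-\gamma t}$ out in front and summing over the number of iterations yields a geometric series of ratio at most $\delta+\gamma/\alpha$; by the choice of $c$, this ratio is strictly less than $1$, and the resulting sum produces the factor $(1-\gamma/\alpha-\delta)^{-1}$ appearing in $D_3$. The factor $(D_2+1)^2$ arises when bounding $E\phi(Y^i)$ after $Y^i$ has entered $K$ via $D_2=\sup_{x\in K,s}E_x\phi(X_s)$ (finite because condition~3) gives $E_x\phi(X_s)\le c+\delta$ for $x\in K$), applied once per process; the additive $2D_2+3$ collects boundary contributions from the initial decomposition and the first iteration. The main technical obstacle will be this bookkeeping---defining the alternating return times so that strong Markov applies cleanly while preserving independence where needed, and verifying the per-step $\delta$-gain combines with the $e^{-\alpha(\cdot)}$-decay between returns to produce exactly the stated constant $D_3$.
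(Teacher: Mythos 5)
Your outline matches the paper's argument in all essentials: the decomposition by alternating return times of $Y^1,Y^2$ to $K$, the independence-based estimate on the free-flight event $\{\tau>t\}$ via condition 2), the observation that $K\subset K'$ forces $\phi(Y^{\text{other}})>c$ at each return while $\theta>t$ (so condition 3) delivers a $\delta$-factor per iteration), and the geometric summation under the constraint $\delta<1-\gamma/\alpha$. The only point I would flag is that the per-step ratio you wrote down, $\delta+\gamma/\alpha$, is an upper bound for, but not equal to, the ratio $\delta\alpha/(\alpha-\gamma)$ that actually arises in the iteration; both give summability under the same hypothesis and lead to the stated $D_3$, so this is a cosmetic rather than substantive discrepancy.
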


\begin{proof}
We consider stopping time
$$\tau^1=\inf\{t:Y_t^1\in K\hbox{ or }Y_t^2\in K\},$$
and define the sequence of random variables $\iota_n,n\geq 1$
taking values in $\{1,2\}$ by $$
\begin{cases}\iota_n=n\, (\mathrm{mod}\,2),&\hbox{ if }Y^1_{\tau^1}\in
K\\
\iota_n=n+1\, (\mathrm{mod}\,2),& \hbox{ otherwise}
\end{cases}, \quad n\geq 1.
$$
Then, we  define define iteratively the sequence of stopping
times $$ \tau^{n+1}=\inf\{t>\tau^{n}:Y_t^{\iota_{n+1}}\in
K\},\quad n\geq 1.
$$ We put $\tau^0=0, \tau^\infty=\lim_n\tau^n$. Obviously,
$\tau^\infty=\inf\{t: Y_t^1\in K, Y_t^2\in K\}\geq \theta$.
Hence, \be\label{a2}
E[\phi(Y_t^1)+\phi(Y^2_t)]\1_{\theta>t}=\sum_{n=0}^\infty
E[\phi(Y_t^1)+\phi(Y^2_t)]\1_{\tau^n\leq t<\tau^{n+1},\theta>t}.
\ee
 Let us estimate separately summands in the right-hand side of
(\ref{a2}). Note that every process $Y^1,Y^2$ is strongly Markov,
and every  stopping time $\tau^n$, given the values $Y^1_{\tau^1}$
and $\tau^{n-1}$, is completely defined by the trajectory of one
component of the process $Y=(Y^1,Y^2)$. Because these components
are independent, this yields that $Y$ has strong Markov property
at every stopping time $\tau^n$.

We have $\tau^1=\tau_K^1\wedge \tau_K^2$, where $\tau_K^i$
denotes the hitting time for the process $Y^i, i=1,2$. Since
$Y^1,Y^2$ are independent,  we get from condition 2):
$$\ba
E[\phi(Y_t^1)+\phi(Y^2_t)]\1_{\tau^1>t,\theta>t}&\leq
E[\phi(Y_t^1)+\phi(Y^2_t)]\1_{\tau^1>t}\\
&\leq E_{y^1}\phi(X_t)\1_{\tau_K>t}+
E_{y^2}\phi(X_t)\1_{\tau_K>t}\leq e^{-\alpha
t}[\phi(y^1)+\phi(y^2)].\ea
$$
Next, consider the summand
$$ \ba E[\phi(Y_t^1)+\phi(Y^2_t)]\1_{\tau^1\leq t<\tau^{2},\theta>t}&\leq E[\phi(Y_t^1)+\phi(Y^2_t)]
(\1_{\tau_K^1\leq t,\tau^2_K>t}+\1_{\tau_K^2\leq t,\tau^1_K>t})\\
& = \Big(E_{y^1} \phi(X_t)\1_{\tau_K\leq t}\Big)P_{y^2}(\tau_K>t)+
\Big(E_{y^1} \phi(X_t)\1_{\tau_K> t}\Big)P_{y^2}(\tau_K\leq t)\\
&+\Big(E_{y^2} \phi(X_t)\1_{\tau_K\leq t}\Big)P_{y^1}(\tau_K>t)+
\Big(E_{y^2} \phi(X_t)\1_{\tau_K> t}\Big)P_{y^1}(\tau_K\leq t).\ea
$$
Recall that $\phi\geq 1$. Then condition 2) yields
$$
P_y(\tau_K>t)\leq e^{-\alpha t}\phi(y).
$$
By the strong Markov property of $X$, we have
$$
E_x\phi(X_t)\1_{\tau_K\leq t}=E_x\left[
E_y\phi(X_{t-s})\Big|_{s=\tau_K, y=X_{\tau_k}}\right]\leq D_2.
$$
Therefore,
$$E[\phi(Y_t^1)+\phi(Y^2_t)]\1_{\tau^1\leq t<\tau^{2},\theta>t}\leq 2(D_2+1)[\phi(y^1)+\phi(y^2)]e^{-\alpha t}.
$$
Remark that, in fact, we have proved inequality
$$
E[\phi(Y_t^1)+\phi(Y^2_t)]\1_{t<\tau^{2}}\leq
2(D_2+1)[\phi(y^1)+\phi(y^2)] e^{-\alpha t}$$ which yields
\be\label{a21}Ee^{\gamma\tau^{2}}\leq 2(D_2+1)\left({\alpha\over
\alpha-\gamma}\right)[\phi(y^1)+\phi(y^2)].\ee

We have estimated two first summands in (\ref{a2}). The other
summands  can be estimated iteratively in the following way. We
have \be\label{a3}\ba E\phi(Y_t^{\iota_{n+1}})\1_{\tau^n\leq
t<\tau^{n+1},\theta>t}&\leq E\phi(Y_t^{\iota_{n+1}})\1_{\tau^n\leq
t<\tau^{n+1},\theta>\tau_n}\\
&=e^{-\gamma t}E \left(e^{\gamma\tau^{n}}\1_{\tau^n\leq
t,\theta>\tau_n}E\Big[\phi(Y_t^{\iota_{n+1}})e^{\gamma
(t-\tau^{n})}\1_{t<\tau^{n+1}}\Big|\Ff_{\tau^n}\Big]\right)\\
&\leq  e^{-\gamma t} Ee^{\gamma\tau^{n}}\1_{\tau^n\leq
t,\theta>\tau_n}\phi(Y^{\iota_{n+1}}_{\tau^n}).\ea \ee Here,
$\{\Ff_t\}$ denotes the natural  filtration for $Y$. We have used
strong Markov property at the point $\tau^n$ and inequality
$$
E_xe^{\gamma t}\phi(X_t)\1_{\tau_K>t}\leq \phi(x)
$$
that follows from 2).

Next, the processes $U^n_t\eqdef Y^{\iota_{n}}_{t-\tau^n},
V_t^n\eqdef Y^{\iota_{n+1}}_{t-\tau^{n}}$ are conditionally
independent w.r.t.  $\Ff_{\tau^n}$. Denote $\varsigma^n$ the
first time for $V^n$ to hit $K$. Then
$\tau^{n+1}=\varsigma^n+\tau^n$.

We have
$$\ba
E\phi(Y_t^{\iota_{n}})\1_{\tau^n\leq t<\tau^{n+1},\theta>t}&\leq
E\phi(Y_t^{\iota_{n}})\1_{\tau^n\leq t<\tau^{n+1},\theta>\tau^n}\\
&=E\left(E\Big[\phi(U^n_{t-\tau_n})\Big|\Ff_{\tau^n}\Big]E\Big[\1_{\varsigma^n>t-\tau_n}\Big|\Ff_{\tau^n}\Big]\right)\1_{\tau^n\leq
t,\theta>\tau^n}\\
&\leq D_2
E\left(E\Big[\1_{\varsigma^n>t-\tau_n}\Big|\Ff_{\tau^n}\Big]\right)\1_{\tau^n\leq
t,\theta>\tau^n}.\ea
$$
In the last inequality we have used that, by the construction,
$U^n_0=Y^{\iota_{n}}_{\tau^n}\in K$, and hence
$$
E\Big[\phi(U^n_{t-\tau_n})\Big|\Ff_{\tau^n}\Big]\1_{t\geq
\tau_n}\leq\sup_{x\in K,t\in\ax}E\phi(X_t)=D_2.
$$
 Then,
since $\phi\geq 1$, \be\label{a4}\ba
E\phi(Y_t^{\iota_{n}})\1_{\tau^n\leq t<\tau^{n+1},\theta>t}&\leq
D_2 e^{-\gamma t}E \left(e^{\gamma\tau^{n}}\1_{\tau^n\leq
t,\theta>\tau_n}E\Big[\phi(Y_t^{\iota_{n+1}})e^{\gamma
(t-\tau^{n})}\1_{t<\tau^{n+1}}\Big|\Ff_{\tau^n}\Big]\right)\\
&\leq   D_2 e^{-\gamma t} Ee^{\gamma\tau^{n}}\1_{\tau^n\leq
t,\theta>\tau_n}\phi(Y^{\iota_{n+1}}_{\tau^n})\leq D_2
Ee^{\gamma\tau^{n}}\1_{\theta>\tau_n}\phi(Y^{\iota_{n+1}}_{\tau^n}).\ea
\ee

Let us estimate
$$
Ee^{\gamma\tau^{n}}\1_{\theta>\tau_n}\phi(Y^{\iota_{n+1}}_{\tau^n}).
$$

We have $Y^{\iota_n}_{\tau^n}\in K$, and hence inequality
$\theta>\tau^n$ implies that $Y^{\iota_{n+1}}_{\tau^n}\not \in
K'$. Recall that $\phi>c$ outside $K'$, and $c$ is chosen in such
a way that $E_x\phi(X_t)\1_{\phi(X_t)>c}<\delta$ for any $x\in K,
t\in\ax$. Therefore, the same arguments with those that lead to
(\ref{a4}) provide
$$
Ee^{\gamma\tau^{n}}\1_{\theta>\tau_n}\phi(Y^{\iota_{n+1}}_{\tau^n})\leq\delta
Ee^{\gamma\tau^{n-1}}\1_{\theta>\tau_{n-1}} E\Big[e^{\gamma
(\tau^{n}-\tau^{n-1})}\Big|\Ff_{\tau^{n-1}}\Big].
$$
It can be verified easily that, under condition 2),
$$
E_xe^{\gamma\tau_K}\leq {\alpha\over \alpha-\gamma} \phi(x),
\quad x\in \XX.
$$
Hence, for $n\geq 2$,
$$\ba
Ee^{\gamma\tau^{n}}\1_{\theta>\tau_n}\phi(Y^{\iota_{n+1}}_{\tau^n})&\leq{\delta\alpha\over
\alpha -\gamma}
Ee^{\gamma\tau^{n-1}}\1_{\theta>\tau_{n-1}}\phi(Y^{\iota_{n}}_{\tau^{n-1}})\leq
\cdots\\
&\leq \left({\delta\alpha\over \alpha
-\gamma}\right)^{n-2}Ee^{\gamma\tau^{2}}\phi(Y^{\iota_{3}}_{\tau^2})\leq
D_2\left({\delta\alpha\over \alpha
-\gamma}\right)^{n-2}Ee^{\gamma\tau^{2}}.\ea
$$
 The latter
estimate and (\ref{a21}) provide
$$
E[\phi(Y_t^1)+\phi(Y_t^2)]\1_{\tau^n\leq t<\tau^{n+1},\theta>t}
\leq 2e^{-\gamma t}(D_2+1)^2\left({\alpha\over \alpha
-\gamma}\right)\left({\delta\alpha\over \alpha
-\gamma}\right)^{n}[\phi(y^1)+\phi(y^2)], \quad n\geq 2.
$$
This inequality, together with (\ref{a3}) and (\ref{a4}), gives
(\ref{a1}) after summation by $n$.
\end{proof}

 The rest of the proof of Theorem \ref{tA1} is based on the construction described in \cite{Kul09}, Section
 3.2. Here, we give the sketch of the construction, referring interested reader to \cite{Kul09} for details,
  discussion and references.

 Consider two types of "elementary couplings": a "simple coupling"
 and a "gluing coupling".
 The simple coupling is just a two-component
 Markov process $Z=(Z^1,Z^2)$ such that either $Z^1, Z^2$ are
 independent if $Z^1_0=z^1, Z^2_0=z^2, z^{1,2}\in \XX,$ and
 $z^1\not=z^2$, or $Z^1=Z^2$ if $Z^1_0=Z_0^2=z\in\XX$. The gluing
 coupling  is constructed on a given time interval $[0,T]$ for fixed $z^1,z^2\in \XX$  in such  a
 way that
 $Z^1_0=z^1, Z^2_0=z^2$, and
 $$
 P(Z_T^1=Z_T^2)=1-{1\over
 2}\|P_T(z^1,\cdot)-P_T(z^2,\cdot)\|_{var}.
 $$

Next, we construct the "switching coupling" $Z$ as an appropriate
mixture of these elementary ones. Namely, for a given $z^1, z^2\in
\XX$ we consider a simple coupling $Z^{s}=(Z^{s,1},Z^{s,2})$ with
$Z^{s,1}_0=z^1, Z^{s,2}_0=z^2$ and define $\theta^1=\min\{t:
Z_t^s\in K'\times K'\}$ (the set $K'$ is defined above). Then the
value of $Z^s$ at the random time moment $\theta^1$ is
substituted, as  the starting position, into an independent copy
of the gluing coupling $Z^{g}$. The switching coupling $Z$ is
defined, up to the random moment of time $\theta^2=\theta^1+T$, as
$$
Z_t=\begin{cases}Z^s_t,&t\leq \theta_1,\\
Z^g_{t-\theta^1}, t\in(\theta^1,\theta^2].
\end{cases}
$$
Then this construction is iterated: the value $Z_{\theta^2}$ is
substituted, as the starting position, into an independent copy of
the simple coupling, etc. This construction gives a coupling $Z$
and a sequence of stopping times $\theta^k, k\geq 1$ such that

(a) if $Z^1_{\theta^k}=Z^2_{\theta^k}$ for some $k$, then
$Z^1_{t}=Z^2_{t}$ for $t>\theta_k$;

(b) for every $k$,
$$P(Z^1_{\theta^{2k}}\not= Z^2_{\theta^{2k}}|\Ff_{\theta^{2k-1}})\leq
\kap(T,K')\quad \hbox{ a.s.,}$$ where $\{\Ff_t\}$ denotes the
natural filtration for $Z$.

 Recall that $\phi(x)\to \infty, x\to
\infty$, hence $K'=\{\phi\leq c\}$ has a compact closure.
Therefore, by the local Doeblin condition, $T$ can be chosen in
such a way that $\kap(T,K')\leq \kap(T,\mathrm{closure}(K'))<1$.

Let us estimate the value
$$
E\Big[\phi(Z^1_t)+\phi(Z_t^2)\Big]\1_{Z_t^1\not=Z_t^2}.
$$
Property (a) allows one to write \be\label{a5}\ba
E\Big[\phi(Z^1_t)+\phi(Z_t^2)\Big]\1_{Z_t^1\not=Z_t^2}&\leq
E\Big[\phi(Z^1_t)+ \phi(Z_t^2)\Big]\1_{\theta^2>t}\\
&+\sum_{k=1}^\infty
E\Big[\phi(Z^1_t)+\phi(Z_t^2)\Big]\1_{\theta^{2k}\leq
t<\theta^{2k+2}}\1_{Z^1_{\theta^{2k}}\not=Z^2_{\theta^{2k}}}. \ea
\ee Take arbitrary $\beta\in(0,\alpha)$. It follows immediately
from (\ref{a1}) with $\gamma=\beta$ that the first summand in the
right hand side of (\ref{a5}) is estimated by $Ce^{-\beta
t}[\phi(z^1)+\phi(z^2)]$. The same inequality yields that
$$\ba
E\left(\Big[\phi(Z^1_t)+\phi(Z_t^2)\Big]\1_{t<\theta^{2k+2}}|\Ff_{\theta^{2k}}\right)&=
E\left(\Big[\phi(Z^1_t)+\phi(Z_t^2)\Big]\1_{t-T<\theta^{2k+1}}|\Ff_{\theta^{2k}}\right)\\
& \leq
C\Big[\phi(Z^1_{\theta^{2k}})+\phi(Z_{\theta^{2k}}^2)\Big]e^{\beta(\theta^{2k}+T-t)}.\ea
$$
Hence, we can estimate the $k$-th summand in the sum in the right
hand side of (\ref{a5}) by
$$
Ce^{-\beta t}e^{\beta
T}E\Big[\phi(Z^1_{\theta^{2k}})+\phi(Z_{\theta^{2k}}^2)\Big]e^{\beta\theta^{2k}}\1_{Z^1_{\theta^{2k}}\not=Z^2_{\theta^{2k}}}.
$$
Next, we remove the function $\phi$ from this estimate:
$$\ba
E\Big[\phi(Z^1_{\theta^{2k}})+\phi(Z_{\theta^{2k}}^2)\Big]e^{\beta\theta^{2k}}\1_{Z^1_{\theta^{2k}}\not=Z^2_{\theta^{2k}}}
&\leq
E\Big[\phi(Z^1_{\theta^{2k}})+\phi(Z_{\theta^{2k}}^2)\Big]e^{\beta\theta^{2k-1}+\beta
T}\1_{Z^1_{\theta^{2k-2}}\not=Z^2_{\theta^{2k-2}}}\\
& =Ee^{\beta\theta^{2k-1}+\beta
T}\1_{Z^1_{\theta^{2k-2}}\not=Z^2_{\theta^{2k-2}}}
E\Big[\phi(Z^1_{\theta^{2k-1}+T})+\phi(Z_{\theta^{2k-1}+T}^2)|\Ff_{\theta^{2k-1}}\Big]\\
&\leq Ce^{\beta T}Ee^{\beta\theta^{2k-1}+\beta
T}\1_{Z^1_{\theta^{2k-2}}\not=Z^2_{\theta^{2k-2}}}.\ea
$$
Here, we have used condition 3) and notation $\theta^0=0$ (recall
that $\phi(Z^1_{\theta^{2k-1}})\leq c,
\phi(Z^1_{\theta^{2k-1}})\leq c$ by the construction of the
coupling $Z$). Hence, (\ref{a5}) can be rewritten as
$$
E\Big[\phi(Z^1_t)+\phi(Z_t^2)\Big]\1_{Z_t^1\not=Z_t^2}\leq
Ce^{-\beta t}\Big[1+\sum_{k=1}^\infty
Ee^{\beta\theta^{2k-1}}\1_{Z^1_{\theta^{2k-2}}\not=Z^2_{\theta^{2k-2}}}\Big].
$$
Next, from the property (b) of the coupling $Z$, we have
$$
Ee^{\beta\theta^{2k-1}}\1_{Z^1_{\theta^{2k-2}}\not=Z^2_{\theta^{2k-2}}}\leq
\left[Ee^{2\beta\theta^{2k-1}}\right]^{1\over 2}P^{1\over
2}(Z^1_{\theta^{2k-2}}\not=Z^2_{\theta^{2k-2}}) \leq
\left[Ee^{2\beta\theta^{2k-1}}\right]^{1\over 2}\kap^{k-1\over
2}(T,K').$$

Up to this moment, $\beta\in (0,\alpha)$ was taken in an
arbitrary way. On the other hand, (\ref{a1}) yields that, for
fixed $\gamma<\alpha$ and $k>1$,
$$
E[\1_{\theta^{2k-1}-\theta^{2k-3}>t}|\Ff_{\theta^{2k-3}}]=E[\1_{\theta^{2k-2}-\theta^{2k-3}>t-T}|\Ff_{\theta^{2k-3}}]\leq
C e^{-\gamma t}.
$$
Hence, for every $q>1$,  one can take $\beta>0$ small enough for
$E[e^{2\beta(\theta^{2k-1}-\theta^{2k-3})}|\Ff_{\theta^{2k-3}}]\leq
q$ a.s. At last, by (\ref{a1}),
$$
Ee^{2\beta\theta^1}\leq C[\phi(z^1)+\phi(z^2)]
$$
for $\beta<{\alpha\over 2}$. This, finally, provides the estimate
\be\label{a7}
E\Big[\phi(Z^1_t)+\phi(Z_t^2)\Big]\1_{Z_t^1\not=Z_t^2}\leq
Ce^{-\beta t}[\phi(z^1)+\phi(z^2)]\Big[1+\sum_{k=1}^\infty
\left(q\kap(T,K')\right)^{k-1\over 2}\Big] =C'e^{-\beta
t}[\phi(z^1)+\phi(z^2)], \ee where $C'=C\Big[1+\sum_{k=1}^\infty
\left(q\kap(T,K')\right)^{k-1\over 2}\Big]$. Note that
$C'<+\infty$ if, in the construction described before, $q>1$ is
taken in such a way that
$$
q\kap(T,K')<1.
$$
Now, we can put $z^1=x$ and assume $z^2$ to be random and have
its distribution equal to $\pi$. Then $Z$ is a
$(\delta_x,\pi)$-coupling, and, by (\ref{a7}),
$$
E\Big[\phi(Z^1_t)+\phi(Z_t^2)\Big]\1_{Z_t^1\not=Z_t^2}\leq
C'e^{-\beta t}\left[\phi(x)+\int_\XX\phi\,d\pi\right]\leq
Ce^{-\beta t}\phi(x),
$$
here we took into account that $\phi\geq 1$ and $\int_\XX\phi\,
d\pi<+\infty$. The proof of Theorem \ref{tA1} is complete.

\begin{rem}\label{rA1} Condition 3) of Theorem \ref{tA1} yields  $\sup_{x\in K, t\in
\ax}E_x\phi(X_t)<+\infty.$ On the other hand, existence of
exponential $\phi$-coupling provides that $P_t(x,dy)\to \pi(dy),
t\to \infty$ in variation for every $x\in K$. Consequently, under
conditions of Theorem \ref{tA1}, $\int_\XX\phi\,d\pi<+\infty$.
One can easily deduce similar statement under conditions of
Theorem \ref{tA2} and Proposition \ref{p22}.
\end{rem}

\subsection{Proof of Theorem \ref{tA2}}

One can see that, in the previous arguments,  the only place
where condition 1) of Theorem \ref{tA1} was used is that the set
$K'=\{\phi\leq c\}$ has a compact closure. This property was not
required straightforwardly: we use it only to verify that
$\kap(T, K')<1$, i.e. that $X$ satisfies the Doeblin condition on
$K'$.  Hence, literally the same arguments ensure that the process
$X$ admits an exponential $\phi$-coupling assuming that $X$
satisfies the Doeblin condition on every set of the type
$\{\phi\leq c\}$. Therefore, the following statement yields
Theorem  \ref{tA2}.
\begin{lem} Assume that conditions 2),3) of
Theorem \ref{tA1} hold true and  $X$ satisfies the extended
Doeblin condition on $K$.

Then $X$ satisfies the Doeblin condition on every set of the type
$\{\phi\leq c\}$.
\end{lem}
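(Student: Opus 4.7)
The goal is, given $c \geq 1$, to produce $T^* > 0$ and $\delta > 0$ such that, for every pair $x, y \in A := \{\phi \leq c\}$, one has $\tfrac{1}{2}\|P_{T^*}(x, \cdot) - P_{T^*}(y, \cdot)\|_{var} \leq 1 - \delta$. Equivalently, I plan to exhibit a coupling of $X^x$ and $X^y$ with $P(X^x_{T^*} = X^y_{T^*}) \geq \delta$. The strategy is to run the two copies independently until each has visited $K$ at a (possibly iterated) hitting time lying in the window $[T^* - T_2, T^* - T_1]$, and then invoke the extended Doeblin condition on $K$ to glue them at time $T^*$.

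Fix $h \in (0, T_1)$ and define iterated hitting times $\tau^{(0)} := \tau_K$, $\tau^{(n+1)} := \inf\{t \geq \tau^{(n)} + h : X_t \in K\}$. Condition 2 yields $E_y e^{\alpha' \tau_K} \leq \frac{\alpha}{\alpha - \alpha'} \phi(y)$ for $\alpha' \in (0, \alpha)$, while condition 3 gives the uniform bound $D := \sup_{z \in K, t \in \ax} E_z \phi(X_t) < \infty$ (as noted in the proof of Theorem \ref{tA1}). Combining these via the strong Markov property at $\tau^{(n)} + h$, I obtain uniform exponential moments for the gaps $\tau^{(n+1)} - \tau^{(n)}$ with $n \geq 1$, together with $E_x e^{\alpha' \tau_K} \leq \alpha c/(\alpha - \alpha')$ for the initial delay when $x \in A$. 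The sequence $\{\tau^{(n)}\}$ thus behaves as a Markov renewal process with bounded mean gap $\mu < \infty$ and bounded-in-mean initial delay, driven by the chain $Y_n := X_{\tau^{(n)}} \in K$. A renewal-type estimate (the expected number of $\tau^{(n)}$ falling in a window of length $T_2 - T_1$ is close to $(T_2 - T_1)/\mu$ for large $T^*$ by Blackwell's theorem applied to the Markov renewal sequence, while this count is capped deterministically by $(T_2 - T_1)/h + 1$) then produces $T^* > 0$ and $\delta_0 > 0$ for which
\[
P_x\!\left(\exists n \geq 0 : \tau^{(n)} \in [T^* - T_2, T^* - T_1]\right) \geq \delta_0, \qquad x \in A.
\]

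Running $X^x, X^y$ independently and letting $N^x, N^y$ be the first such indices, the joint event that both exist has probability $\geq \delta_0^2$ by independence. Conditionally on this event and on the pre-$\tau^{(N^x)}$ and pre-$\tau^{(N^y)}$ information, the strong Markov property identifies the laws of $X^x_{T^*}$ and $X^y_{T^*}$ as $P_{s^x}(z^x, \cdot)$ and $P_{s^y}(z^y, \cdot)$, with $(z^x, s^x), (z^y, s^y) \in K \times [T_1, T_2]$; the extended Doeblin condition then bounds their total variation distance by $2(1 - \eps)$, so a maximal coupling achieves coincidence with conditional probability $\geq \eps$. Combining the two stages yields overall coincidence probability $\geq \delta_0^2 \eps =: \delta > 0$, establishing the Doeblin condition on $A$. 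The main obstacle is the uniform renewal estimate: the inter-arrival times form a Markov (not i.i.d.)\ renewal sequence driven by $Y_n \in K$, so making the bound uniform over $x \in A$ requires carefully transferring the exponential-moment control of $\tau_K$ from condition 2 to the regenerative structure supplied on $K$ by conditions 2 and 3.
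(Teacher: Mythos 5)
Your overall strategy --- run two independent copies until each has hit $K$ at a time whose ``remaining time'' to the target date lies in the window $[T_1,T_2]$, then apply the extended Doeblin condition to glue --- is a natural one, but it is \emph{not} the paper's route, and it contains a gap that I do not think can be closed from the stated hypotheses.

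The crux is the renewal estimate
$$
P_x\!\left(\exists\, n : \tau^{(n)} \in [T^*-T_2,\, T^*-T_1]\right) \geq \delta_0,\qquad x \in \{\phi\le c\}.
$$
You propose to obtain this from Blackwell's theorem for the Markov renewal sequence $(Y_n,\tau^{(n)})$. But Blackwell's theorem requires a non-lattice (aperiodicity) condition on the inter-arrival distribution, and nothing in conditions 2), 3) or in the extended Doeblin condition \emph{on $K$} guarantees that the return-time distributions from $K$ are non-lattice. (The extended Doeblin condition controls the transition kernels $P_s(x,\cdot)$, $s\in[T_1,T_2]$, $x\in K$, not the law of $\tau_K$.) Even granting non-lattice behaviour, Blackwell gives a limiting statement for a \emph{fixed} renewal process; turning it into a bound that is uniform in the starting point $x$ over the whole sublevel set $\{\phi\le c\}$ (with initial delay $\tau_K$ whose law depends on $x$) and uniform in the driving chain $Y_n$ on $K$ requires a quantitative Markov--renewal theorem with explicitly controlled hypotheses, which you do not supply. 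You yourself flag this as ``the main obstacle,'' and I agree it is genuinely open in your argument, not a routine technicality.

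The paper sidesteps renewal theory entirely by not requiring the two remaining times to land in the \emph{short} window $[T_1,T_2]$. It chooses $Q$ large so that $P_x(\tau_K\le Q)\ge\tfrac12$ on $\{\phi\le c\}$, runs both copies independently, conditions on the event (of probability $\ge\tfrac18$) that, say, $\tau^1\le\tau^2\le Q$, sets $T=Q+T_1$, and notes that then both remaining times $T-\tau^1,\,T-\tau^2$ lie in the \emph{long} window $[T_1,T_1+Q]$. It then invokes the observation that the extended Doeblin condition on $[T_1,T_2]$ propagates (via Chapman--Kolmogorov) to $[T_1,T_2']$ for any $T_2'>T_2$, so $\kap(T_1,T_1+Q,K)<1$ suffices for the extended gluing coupling. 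No alignment into a fixed-length window, hence no Blackwell, no non-lattice assumption, and no uniform renewal estimate are needed. If you want to salvage your route, the missing ingredient is precisely this interval-enlargement lemma for $\kap(T_1,\cdot,K)$; with it you can give up on the short window and your remaining steps (strong Markov at the hitting times, independence, maximal coupling via the extended Doeblin bound) go through essentially as in the paper.
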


\begin{proof} We use an auxiliary construction of the {\it extended gluing coupling}. This coupling is defined,
  for fixed $z^1,z^2\in \XX$, $t_1,t_2\in \Re$,  in such  a
 way that
 $Z^1_0=z^1, Z^2_0=z^2$, and
 $$
 P(Z_{t-1}^1\not=Z_{t_2}^2)=1-{1\over
 2}\|P_{t_1}(z^1,\cdot)-P_{t_2}(z^2,\cdot)\|_{var}.
 $$
One can construct this coupling using literally the same
arguments with those used in the construction of the (usual)
gluing coupling (see \cite{Kul09}, Section
 3.2), with
the terminal time moment $T$ replaced by $t_1$ for the component
$Z^1$ and $t_2$ for the component $Z^2$. It can be verified that
such a construction can be made in a joinly measurable way w.r.t.
probability variable and $z^{1,2}, t^{1,2}$ (we refer for a more
detailed discussion of the measurability problems to \cite{Kul09},
Section
 3.2).

 Under condition 2) of Theorem \ref{tA1},
$$
P_x(\tau_K>t)\leq e^{-\alpha t}\phi(x).
$$
Therefore, for $Q\in\ax$ large enough,
$$
P_x(\tau_K\leq Q)\geq {1\over 2},\quad x\in K'=\{\phi\leq c\}.
$$
Consider two independent copies $Y^1,Y^2$ of the process $X$
starting from the points $x^1,x^2\in K'$. Denote
$$
\tau^{1,2}=\inf\{t\geq 0: Y^{1,2}_t\in K\}.
$$
Since $P(\tau^1\leq Q,\tau^2\leq Q)\geq {1\over 4}$, one of the
following inequalities hold:
$$
P(\tau^1\leq \tau^2\leq Q)\geq {1\over 8}, \quad P(\tau^2\leq
\tau^1\leq Q)\geq {1\over 8}.
$$
Assume that the first inequality holds (this does not restrict
generality). Then we put $T=Q+T_1$ (here $T_1$ comes from
(\ref{81})) and construct the coupling $Z_t, t\in [0, T]$ in the
following way. If inequality $\tau^1\leq \tau^2\leq Q$ does not
hold, then $Z^{1,2}=Y^{1,2}$. Otherwise we consider an
independent copy of the extended gluing coupling, and substitute
in it $Z^1_{\tau^1}, Z^2_{\tau^2}$  instead of the initial values
$z^1, z^2$,  and $T-\tau^1$, $T-\tau^2$ instead of the terminal
time moments $t^1, t^2$. Under such a construction,
$$
P(Z_T^1=Z_T^2)\geq {1\over 8}\Big(1-{1\over 2}\sup_{z^1,z^2\in K,
t^1,t^2\in [T_1,
T_1+Q]}\|P_{t_1}(z^1,\cdot)-P_{t_2}(z^2,\cdot)\|_{var}\Big).
$$
Therefore, \be\label{82} 1-\kap(T, K')\geq {1\over
8}\Big(1-\kap(T_1, T_1+Q, K)\Big). \ee Clearly, $\kap(T_1, T_2',
K)\leq \kap(T_1, T_2, K)$ for every $T_2'\in [T_1, T_2]$. On the
other hand,  using Chapman-Kolmogorov equation, one can verify
easily that inequality (\ref{81}) implies the same inequality
with $T_2$ replaced by arbitrary $T_2'>T_2$. Hence, under
condition (\ref{81}), we can put $T_2'=T_1+Q$ and get $\kap(T_1,
T_1+Q, K)<1$. This, together with (\ref{82}), provides that
$\kap(T,K')<1$.
\end{proof}
\subsection{Proof of Proposition \ref{p22}} It can be verified
easily that condition 1) of Proposition \ref{p22} implies that
$\phi(x)=E_xe^{\alpha'\tau_K}$ satisfies condition 1) of Theorem
\ref{tA1}. By the Markov property of $X$,
$$\phi(X_t)=\Big[E_ye^{\alpha'\tau_K}\Big]_{y=X_t}=E\Big[e^{\alpha'\tau^t_K}\Big|\Ff_t\Big],\quad t\geq 0$$
(see Section 2.1 for the  notation $\tau^t_K$). We have
$\tau^t_K=\tau_K-t$ on the set $\{\tau_K>t\}$. Therefore,
$$
E_x\phi(X_t)\1_{\tau_K>t}=E_xe^{\alpha'\tau_K^t}\1_{\tau_K>t}=E_xe^{\alpha'(\tau_K-t)}\1_{\tau_K>t}\leq
e^{-\alpha't}\phi(x).
$$
Hence, condition 2) of Theorem \ref{tA1} holds true with $\alpha$
replaced by $\alpha'$.

Condition 3) of Theorem \ref{tA1}, in fact, is the claim for the
function $\phi$ to be uniformly integrable w.r.t. the family of
distributions $\{P_t(x,\cdot), x\in K, t\in \ax\}$. Clearly, it
is satisfied if
$$
\sup_{x\in K, t\in \ax} E_x\phi^r(X_t)<+\infty.
$$
for some $r>1$. Therefore, for the function $\phi(x)=E_xe^{\alpha'
\tau_K}$, condition 3) of Theorem \ref{tA1} holds true provided
that
$$
\sup_{x\in K, t\in \ax} E_x\phi^{\alpha/\alpha'}(X_t)<+\infty.
$$
(recall that $\alpha'\in (0,\alpha)$). By the H\"older inequality,
$$\phi^{\alpha/\alpha'}(y)\leq E_ye^{\alpha\tau_K}.
$$
 Therefore, Proposition
\ref{p22} is provided by Theorems \ref{tA1}, \ref{tA2} and the
following statement.

\begin{lem} Let function $\psi:\XX\to [1,+\infty)$ be such that
$$E_x\psi(X_t)\1_{\tau_K>t}\leq e^{-\alpha t}\psi(x),x\in\XX;$$
$$\exists\, S>0:\, \sup_{x\in K, t\leq S} E_x
\psi(X_t)<+\infty.
$$
Then
$$
\sup_{x\in K, t\in \ax} E\psi(X_t)<+\infty.
$$
\end{lem}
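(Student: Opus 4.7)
The plan is to establish a self-improving recursive bound on $g(t) := \sup_{x \in K} E_x \psi(X_t)$ by combining the decay hypothesis with the boundedness on $[0,S]$. Denote $M := \sup_{x \in K, t \leq S} E_x \psi(X_t) < +\infty$; by assumption $g(t) \leq M$ for $t \in [0, S]$. The goal is to show $g(t) \leq M/(1 - e^{-\alpha S})$ for all $t \in \ax$.

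Fix $x \in K$ and $t \geq S$, and introduce the stopping time $\sigma := \inf\{s \geq t - S : X_s \in K\}$; that is, the first visit to $K$ after time $t - S$. I would split
$$E_x \psi(X_t) = E_x \psi(X_t) \1_{\sigma \leq t} + E_x \psi(X_t) \1_{\sigma > t}$$
and handle each piece separately. On $\{\sigma \leq t\}$, the strong Markov property at $\sigma$ gives
$$E_x\big[\psi(X_t) \1_{\sigma \leq t}\big] = E_x\Big[\1_{\sigma \leq t}\, E_{X_\sigma}\psi(X_{t - \sigma})\Big],$$
and since $X_\sigma \in K$ and $t - \sigma \leq S$, the inner expectation is bounded by $M$; hence this term is $\leq M$. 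On the complementary event $\{\sigma > t\}$, the process avoids $K$ throughout $[t-S, t]$, so $\tau_K > S$ for the process restarted at time $t - S$. Applying the Markov property at $t - S$ together with the first hypothesis:
$$E_x\big[\psi(X_t) \1_{\sigma > t}\big] = E_x\Big[E_{X_{t-S}}\big[\psi(X_S) \1_{\tau_K > S}\big]\Big] \leq e^{-\alpha S}\, E_x \psi(X_{t - S}).$$

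Combining, $E_x \psi(X_t) \leq M + e^{-\alpha S} E_x \psi(X_{t-S})$ for every $x \in K$ and $t \geq S$. Taking the supremum over $x \in K$ yields the recursion $g(t) \leq M + e^{-\alpha S} g(t - S)$ for $t \geq S$, with $g(t) \leq M$ for $t \in [0, S]$. Iterating $n$ times for $t \in [nS, (n+1)S]$ gives
$$g(t) \leq M \sum_{k=0}^{n-1} e^{-k \alpha S} + e^{-n \alpha S} g(t - nS) \leq \frac{M}{1 - e^{-\alpha S}} + M,$$
which is the desired uniform bound.

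The main obstacle, modest though it is, is the correct bookkeeping on $\{\sigma > t\}$: one must verify that absence of visits to $K$ on $[t-S, t]$ rigorously translates into the event $\{\tau_K > S\}$ for the shifted process, so that the decay hypothesis can be invoked at the right-continuous starting point $X_{t-S}$. Once this Markov-at-$(t-S)$ step is justified (using closedness of $K$ and cadlag paths), everything else is bookkeeping.
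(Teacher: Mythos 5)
Your proof is correct. Both you and the paper isolate a length-$S$ window, condition on whether the process visits $K$ inside it, and close the argument via the strong Markov property and the decay hypothesis; the difference lies in the placement of the window. The paper splits at the fixed time $S$: on $\{\tau^S_K>t\}$ the decay bound produces a factor $e^{-\alpha(t-S)}$ in front of $E_x\psi(X_S)$, and the complementary term is handled by a strong-Markov restart at the first hit of $K$ after time $S$, which yields the stated recursion between $\sup_{t\in\TT_k}E_x\psi(X_t)$ and $\sup_{t\in\TT_{k-1}}E_x\psi(X_t)$ with $k$-dependent coefficients. You instead split at $t-S$, so the decay is always applied over a window of fixed length $S$: the term where $K$ is hit in $[t-S,t]$ restarts from a point of $K$ with remaining time at most $S$ and is bounded directly by $M$, while the avoidance term gives the uniform contraction $e^{-\alpha S}E_x\psi(X_{t-S})$. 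This produces the time-homogeneous recursion $g(t)\leq M+e^{-\alpha S}g(t-S)$, whose iteration is a plain geometric series, so your version is slightly tidier to close. The structural ingredients needed are identical in both arguments: closedness of $K$ together with c\'adl\'ag paths so that $X_\sigma\in K$ on $\{\sigma<\infty\}$, the strong Markov property at $\sigma$, and the ordinary Markov property at the deterministic time $t-S$ (respectively $S$), all of which are available under the standing assumptions. One cosmetic remark: iterating and using $g(t-nS)\leq M$ already gives $g(t)\leq M\sum_{k=0}^{n}e^{-k\alpha S}\leq M/(1-e^{-\alpha S})$, so the extra $+M$ in your final display is superfluous, though harmless.
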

\begin{proof} For $t>S$, one has
\be\label{a8}\ba
E_x\psi(X_t)&=E_x\psi(X_t)\1_{\tau^S_K>t}+E_x\psi(X_t)\1_{\tau^S_K\leq
t}\leq \int_\XX \Big[E_y\psi(X_t)\1_{\tau_K>t-S}\Big]P_S(x,dy)\\
&+E_x\psi(X_t)\1_{\tau^S_K\leq t}\leq e^{-\alpha t+\alpha
S}E\psi(X_S)+E_x\psi(X_t)\1_{\tau^S_K\leq t}. \ea \ee
 Denote
$\TT_k=[kS,(k+1)S]$. It follows from (\ref{a8}) that
$$\sup_{t\in T_k}E_x\psi(X_t)\leq
e^{-\alpha(k-1)S}E_x\psi(X_S)+ \sup_{t\in
T_{k-1}}E_x\psi(X_t),\quad  k\geq 1,
$$
and, consequently,
$$
\sup_{t\in T_k}E_x\psi(X_t)\leq
(e^{-\alpha(k-1)S}+\dots+1)E_x\psi(X_S)+\sup_{t\leq S}E_x\psi(X_t)
 \leq \left[1+(1-e^{-\alpha
S})^{-1}\right]\sup_{t\leq S}E_x\psi(X_t).
$$
\end{proof}

\end{document}